\newcommand{\1}{\mathbf {1}}
\newcommand{\Z}{{\mathbb Z}}
\newcommand{\C}{{\mathbb C}}
\newcommand{\wh}{{\widehat{\mathfrak h}}}
\newcommand{\mraff}{\mathrm{aff}}
\newtheorem{thm}{Theorem}[section]
\newtheorem{prop}[thm]{Proposition}
\newtheorem{lem}[thm]{Lemma}
\newtheorem{rmk}[thm]{Remark}
\newtheorem{definition}[thm]{Definition}
\begin{document}

\begin{center}
{\Large \bf  Fusion rules for $\mathbb{Z}_{2}$-orbifolds of affine and parafermion vertex operator algebras}

\end{center}

\begin{center}
{ Cuipo Jiang$^{a}$\footnote{Supported by China NSF grants No.11771281 and No.11531004.}
and Qing Wang$^{b}$\footnote{Supported by
China NSF grants No.11622107 and No.11531004, Natural Science Foundation of Fujian Province
	No.2016J06002.}\\
$\mbox{}^{a}$ School of Mathematical Sciences, Shanghai Jiao Tong University, Shanghai 200240, China\\
\vspace{.1cm}
$\mbox{}^{b}$ School of Mathematical Sciences, Xiamen University,
Xiamen 361005, China\\
}
\end{center}

\begin{abstract}
This paper is about the orbifold theory of affine and parafermion vertex operator algebras. It is known that the parafermion vertex operator algebra $K(sl_2,k)$ associated to the integrable highest weight modules for the affine Kac-Moody algebra $A_1^{(1)}$ is the building block of the general parafermion vertex operator $K(\mathfrak{g},k)$ for any finite dimensional simple Lie algebra $\mathfrak{g}$ and any positive integer $k$.
We first classify the irreducible modules of $\Z_{2}$-orbifold of the simple affine vertex operator algebra of type
$A_1^{(1)}$ and determine their fusion rules. Then we study the representations of the $\Z_{2}$-orbifold of the parafermion vertex operator algebra $K(sl_2,k)$, we give the quantum dimensions, and more technically, fusion rules for the $\mathbb{Z}_{2}$-orbifold of the parafermion vertex operator algebra $K(sl_2,k)$ are  completely determined.

\end{abstract}

\section{Introduction}
\def\theequation{1.\arabic{equation}}
\setcounter{equation}{0}

This paper is a continuation in a series of papers on the study of the orbifold theory of affine and parafermion vertex operator algebras. It is known that the parafermion vertex operator algebra $K(\mathfrak{g},k)$ is the commutant of a Heisenberg vertex operator subalgebra in the simple affine vertex operator algebra $L_{\hat{\mathfrak{g}}}(k,0)$, where $L_{\hat{\mathfrak{g}}}(k,0)$ is the integrable highest weight module with the positive integer level $k$ for the affine Kac-Moody algebra $\hat{\mathfrak{g}}$ associated to a finite dimensional simple Lie algebra $\mathfrak{g}$ over ${\mathbb C}$. We denote $K(sl_{2},k)$ by $K_{0}$ and $L_{\hat{sl_{2}}}(k,0)$ by $L(k,0)$ in this paper. Since parafermion vertex operator algebras can be identified with $W$-algebras \cite{DLY2}, the orbifold theory of the parafermion vertex algebras corresponds to the orbifold theory of $W$-algebras. Some conjectures in the physics literature about the orbifold $W$-algebras have been studied and solved in \cite{AL}, \cite{ACKL}, \cite{KL}. These results about the orbifold $W$-algebras are mainly structural aspects. Our interest is to study the representation theory of the orbifold parafermion vertex operator algebra from the point of vertex algebras. From \cite{DLY2}, we know that the full automorphism group of the parafermion vertex operator algebra $K_{0}$ for $k\geq 3$ is the
group of order $2$ generated by the automorphism $\sigma$, which is determined by $\sigma(h)=-h$, $\sigma(e)=f$, $\sigma(f)=e$, where $\{ h, e, f\}$ is a standard Chevalley basis of $sl_2$ with brackets $[h,e] = 2e$, $[h,f] = -2f$ and $[e,f] = h$. We have classified the irreducible modules of the orbifold parafermion vertex operator algebra $K_{0}^{\sigma}$ in \cite{JW}, where $K_{0}^{\sigma}$ is the fixed-point vertex operator subalgebra of $K_0$ under $\sigma$. A natural problem next is to determine the fusion rules for $K_{0}^{\sigma}$. Note that the vertex operator algebra $K_{0}^{\sigma}$ can be viewed as a subalgebra of the orbifold affine vertex operator algebra $L(k,0)^{\sigma}$, where $L(k,0)^{\sigma}$ is the fixed-point vertex operator subalgebra of $L(k,0)$ under $\sigma$. In order to understand the representation theory of the orbifold parafermion vertex operator algebra $K_{0}^{\sigma}$ better, we should first understand the representation theory of the orbifold affine vertex operator algebra $L(k,0)^{\sigma}$ first. For this purpose, we classify the irreducible modules of $L(k,0)^{\sigma}$ and determine the fusion rules for $L(k,0)^{\sigma}$ in Section 3. We obtain  Theorem \ref{thm:orbifold3} that there are two kinds of irreducible modules for $L(k,0)^{\sigma}$. One kind is the untwisted type modules coming from the irreducible $L(k,0)$-modules, and the other kind is the twisted type modules coming from the $\sigma$-twisted $L(k,0)$-modules.
Furthermore, we determine the contragredient modules of all these irreducible $L(k,0)^{\sigma}$-modules in Theorem \ref{thm:contragredient}. These results together with the symmetric property of fusion rules imply that we only need to determine two kinds of fusion products, one is the fusion product between the untwisted type modules and the untwisted type modules, and the other is the fusion product between the untwisted type modules and the twisted type modules. Our first step is to construct the intertwining operators among untwisted and twisted $L(k,0)$-modules. We use the $\Delta$-operator introduced by Li in \cite{L3}. Then the fusion products between the untwisted type modules and the twisted type modules can be obtained by applying the fusion rules for the affine vertex operator algebra $L(k,0)$ and the intertwining operator constructed from the $\Delta$-operator. Furthermore, by observing the action of the automorphism $\sigma$ on the $\Delta$-operator, the fusion products between the untwisted type modules and the untwisted type modules follow from the fusion products between the untwisted type modules and the twisted type modules.

The determination of the fusion rules for $K_{0}^{\sigma}$ is much more complicated. We first determine the quantum dimensions of the irreducible $K_{0}^{\sigma}$-modules, which can help us to determine the fusion rules for $K_{0}^{\sigma}$. However it is far from the complete determination of the fusion rules for $K_{0}^{\sigma}$. Our strategy is to employ the lattice realization of the irreducible $K_0$-modules \cite{DLY2} and  the lowest weights of the irreducible $K_{0}^{\sigma}$-modules \cite{JW}, together with the decomposition of the irreducible $L(k,0)$-modules $L(k,i)$ viewed as the modules of the lattice vertex operator subalgebra $V_{{\Z}\gamma}\subseteq L(k,0)$\cite{DLY2} for $0\leq i\leq k$. From the classification results of the irreducible modules of $K_{0}^{\sigma}$, there are two families of untwisted type $K_{0}^{\sigma}$-modules. One family is from the irreducible modules of $K_0$, which are not irreducible as $K_{0}^{\sigma}$-modules. We call it the untwisted module of type $I$. The other family is from the irreducible modules of $K_0$, which are also irreducible as $K_{0}^{\sigma}$-modules. We call it the untwisted module of type $II$. We would like to point out that the main difficulty to determine the fusion products between the untwisted type modules and the untwisted type modules of $K_{0}^{\sigma}$ is to find which one of the irreducible $K_{0}^{\sigma}$-modules of type $I$ can survive in the decomposition of the fusion product, and to distinguish the inequivalent modules emerging in the decomposition of the fusion product. The fusion products between the untwisted type modules and the twisted type modules of $K_{0}^{\sigma}$ are extremely complicated in the case that the level $k$ is even, because from \cite{JW}, we know that in the level $\frac{k}{2}$, there are two irreducible twisted modules of $K_0$, and the lowest weight vector can be in the grade zero or in the grade $\frac{1}{2}$ of the $\sigma$-twisted module of $K_0$. Thus as the $K_{0}^{\sigma}$-modules, there are four irreducible modules in the level $\frac{k}{2}$, when it emerges in the decomposition of the fusion product between the untwisted type module and the twisted type module of $K_{0}^{\sigma}$. We need to distinguish which one can survive for certain cases. The strategy is that we come back to the lattice realization of the irreducible $K_0$-modules $M^{i,j}$ for $0\leq i\leq k$, $0\leq j\leq i$\cite{DLY2}, and we technically  use another basis of the Lie algebra $sl_2$ and apply the intertwining operator among the modules of the lattice vertex operator algebra, together with the analysis of the lowest weights of the irreducible $K_{0}^{\sigma}$-modules we obtained in \cite{JW}. Furthermore, we determine the contragredient modules of all the irreducible $K_0^{\sigma}$-modules, thus the fusion rules for $K_{0}^{\sigma}$ are completely determined.

The paper is organized as follows. In Section 2, we recall some results about the parafermion vertex operator algebra $K_0$, its orbifold vertex operator subalgebra $K_{0}^{\sigma}$ and their irreducible modules. In Section 3, we classify the irreducible modules of the $\Z_{2}$-orbifold $L(k,0)^{\sigma}$ of the affine vertex operator algebra $L(k,0)$ and determine the fusion rules for $L(k,0)^{\sigma}$. In Section 4, we give the quantum dimensions for irreducible $K_{0}^{\sigma}$-modules. In Section 5, we determine the fusion rules for the $\Z_{2}$-orbifold of parafermion vertex operator algebra $K_{0}$.

\section{Preliminaries}
\label{Sect:V(k,0)}\def\theequation{2.\arabic{equation}}

In this section, we recall from \cite{DLY2}, \cite{DLWY}, \cite{DW2}, \cite{ALY1} and \cite{JW} some basic results on the parafermion vertex
operator algebra associated to the
irreducible highest weight module for the affine Kac-Moody algebra
$A_1^{(1)}$ of level $k$ with $k$ being a positive integer and their $\Z_{2}$-orbifolds. We first recall the notion of the parafermion vertex operator algebra.

We are working in the setting of \cite{DLY2}. Let $\{ h, e, f\}$
be a standard Chevalley basis of $sl_2$ with Lie brackets $[h,e] = 2e$, $[h,f] = -2f$,
$[e,f] = h$ and the normalized Killing form $\langle h, h \rangle=2$, $\langle e, f \rangle=1$, $\langle h, e \rangle=\langle h,f \rangle=\langle e,e \rangle=\langle f,f \rangle=0$. Let $\widehat{sl}_2 = sl_2 \otimes \C[t,t^{-1}]
\oplus \C C$ be the affine Lie algebra associated to $sl_2$. Let $k \ge 1$
be an integer and
\begin{equation*}
V(k,0) = V_{\widehat{sl}_2}(k,0) = \mbox{Ind}_{sl_2 \otimes \C[t]\oplus \C
C}^{\widehat{sl}_2}\C
\end{equation*}
be the induced $\widehat{sl}_2$-module such that $sl_2 \otimes \C[t]$ acts
as $0$ and $C$ acts as $k$ on $\mathbf{1}=1$. Then $V(k,0)$ is a
vertex operator algebra generated by $a(-1)\1$ for $a\in sl_2$ such
that
$$Y(a(-1)\1,z) = a(z)=\sum_{n \in \Z} a(n)z^{-n-1}$$
where $a(n)=a\otimes t^n$, with the
vacuum vector $\1$ and the Virasoro vector
\begin{align*}
\omega_{\mraff} &= \frac{1}{2(k+2)} \Big( \frac{1}{2}h(-1)^2\1 +
e(-1)f(-1)\1 + f(-1)e(-1)\1 \Big)\\
&= \frac{1}{2(k+2)} \Big( -h(-2)\1 + \frac{1}{2}h(-1)^2\1 + 2e(-1)f(-1)\1
\Big)
\end{align*}
of central charge $\frac{3k}{k+2}$ (e.g. \cite{FZ},
\cite{Kac}, \cite[Section 6.2]{LL}).

Let $M(k)$ be the vertex operator subalgebra of $V(k,0)$
generated by $h(-1)\1$ with the Virasoro
element
$$\omega_{\gamma} = \frac{1}{4k}
h(-1)^{2}\1$$
of central charge $1$.


The vertex operator algebra $V(k,0)$ has a unique maximal ideal $\mathcal{J}$, which is generated by a weight $k+1$ vector $e(-1)^{k+1}\1$ \cite{Kac}. The quotient algebra $L(k,0)=V(k,0)/\mathcal{J}$ is a
simple, rational  vertex operator algebra as $k$ is a positive
integer (cf. \cite{FZ}, \cite{LL}). Moreover, the image of $M(k)$
in $L(k,0)$ is isomorphic to $M(k)$ and will be
denoted by $M(k)$ again. Set
\begin{equation*}
 K(sl_2,k)=\{v \in L(k,0)\,|\, h(m)v =0
\text{ for }\; h\in {\mathfrak h},
 m \ge 0\}.
\end{equation*}
Then $K(sl_2,k)$ which is the space of highest weight vectors
with highest weight $0$ for $\wh$
is the commutant of $M(k)$ in $L(k,0)$
and is called the parafermion vertex operator algebra associated
to the irreducible highest weight module $L(k,0)$ for
$\widehat{sl_2}.$ The Virasoro element of $K(sl_2,k)$ is given by
$$\omega =\omega_{\mraff} - \omega_{\gamma}=\frac{1}{2k(k+2)} \Big(-kh(-2)\1-h(-1)^2\1+2k
e(-1)f(-1)\1 \Big)
$$
with central charge $\frac{2(k-1)}{k+2}$,
where we still use $\omega_{\mraff}, \omega_{\gamma}$ to
denote their images in $L(k,0)$. We denote $K(sl_2,k)$ by $K_0$.

Set \begin{equation*}\label{eq:W3}
\begin{split}
W^3 &= k^2 h(-3)\1 + 3 k h(-2)h(-1)\1 +
2h(-1)^3\1 - 6k h(-1)e(-1)f(-1)\1 \\
& \quad + 3 k^2e(-2)f(-1)\1 - 3 k^2e(-1)f(-2)\1
\end{split}
\end{equation*}
in $V(k,0)$, and also denote its image in $L(k,0)$ by $W^{3}$. It was proved in \cite{DLY2}(cf.\cite{DLWY}, \cite{DW1}) that the parafermion vertex operator algebra $K_0$ is simple and is generated by $\omega$ and $W^{3}$. If $k\geq3$, the parafermion vertex operator algebra $K_0$ in fact is generated by $W^{3}$. The irreducible $K_0$-modules $M^{i,j}$ for $0\leq i\leq k, 0\leq j\leq k-1$ were constructed in \cite{DLY2}. Note that $K_0=M^{0,0}$. It was also proved in
\cite[Theorem 4.4]{DLY2} that $M^{i,j}\cong M^{k-i,k-i+j}$ as $K_0$-module .   Theorem 8.2 in \cite{ALY1} showed that the $\frac{k(k+1)}{2}$ irreducible $K_0$-modules $M^{i,j}$ for $1\leq i\leq k, 0\leq j\leq i-1$ constructed in \cite{DLY2} form a complete set of isomorphism classes of irreducible $K_0$-modules. Moreover, $K_0$ is $C_2$-cofinite \cite{ALY1} and rational \cite{ALY2} (see also \cite{DR}).

 Let $L(k,i)$ for $0\leq i\leq k$ be the irreducible modules for the rational vertex operator algebra $L(k,0)$ with the top level $U^{i}=\bigoplus_{j=0}^{i}\mathbb{C}v^{i,j}$ which is an $(i+1)$-dimensional irreducible module of the simple Lie algebra $\C h(0)\oplus\C e(0)\oplus \C f(0)\cong sl_2$. The top level of $M^{i,j}$ is a one dimensional space spanned by $v^{i,j}$ for $0\leq i\leq k, 0\leq j\leq i$\cite{DLY2}. The following result was due to \cite{DLY2}.

\begin{lem}\label{lem:lowest}
The operator $o(\omega)=\omega_{1}$ acts on $v^{i,j}, \ 0\leq i\leq k,\ 0\leq j\leq i$ as follows:

\begin{eqnarray}
o(\omega)v^{i,j}=\frac{1}{2k(k+2)}\Big(k(i-2j)-(i-2j)^{2}+2kj(i-j+1)\Big)v^{i,j}.
 \end{eqnarray}
\end{lem}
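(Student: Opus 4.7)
The plan is to split $\omega = \omega_{\mraff} - \omega_{\gamma}$ and compute the action of each piece on $v^{i,j}$ separately, exploiting that $v^{i,j}$ lies in the top level $U^{i}$ of $L(k,i)$. The key inputs are: (i) $v^{i,j}$ is annihilated by $x(n)$ for all $x\in sl_{2}$ and $n\ge 1$; (ii) $v^{i,j}$ is an $h(0)$-weight vector with $h(0)v^{i,j}=(i-2j)v^{i,j}$ (the standard weight basis of the $(i+1)$-dimensional irreducible $sl_{2}$-module).

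First I would handle $o(\omega_{\mraff})$. Since $\omega_{\mraff}$ is the Sugawara conformal vector for $\widehat{sl}_{2}$ at level $k$, the operator $o(\omega_{\mraff})=L_{\mraff}(0)$ acts on any top-level vector $v$ by
$$L_{\mraff}(0)\,v=\frac{1}{2(k+2)}\Big(\tfrac{1}{2}h(0)^{2}+e(0)f(0)+f(0)e(0)\Big)v,$$
because all positive modes kill $v$. The operator in parentheses is the $sl_{2}$-Casimir, which acts on the irreducible module $U^{i}$ by the scalar $i(i+2)/2$ (evaluated most easily on the highest weight vector $v^{i,0}$ using $[e,f]=h$ and $e(0)v^{i,0}=0$). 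Hence $o(\omega_{\mraff})v^{i,j}=\dfrac{i(i+2)}{4(k+2)}v^{i,j}$ for all $j$.

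Next I would compute $o(\omega_{\gamma})$. Expanding $Y(h(-1)^{2}\mathbf{1},z)=\,:h(z)h(z):\,$ and extracting the coefficient of $z^{-2}$ gives
$$o(h(-1)^{2}\mathbf{1})=h(0)^{2}+2\sum_{m>0}h(-m)h(m).$$
On $v^{i,j}$ the sum vanishes because $h(m)v^{i,j}=0$ for $m\ge 1$, so
$$o(\omega_{\gamma})v^{i,j}=\frac{1}{4k}h(0)^{2}v^{i,j}=\frac{(i-2j)^{2}}{4k}v^{i,j}.$$

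Finally, subtracting yields
$$o(\omega)v^{i,j}=\left(\frac{i(i+2)}{4(k+2)}-\frac{(i-2j)^{2}}{4k}\right)v^{i,j}=\frac{k\,i(i+2)-(k+2)(i-2j)^{2}}{4k(k+2)}v^{i,j},$$
and it remains to verify the algebraic identity
$$k\,i(i+2)-(k+2)(i-2j)^{2}=2\bigl(k(i-2j)-(i-2j)^{2}+2kj(i-j+1)\bigr),$$
which reduces after cancellation to $i^{2}+2i-(i-2j)^{2}=2(i-2j)+4j(i-j+1)$, checked by expansion. This gives the stated formula. There is no serious obstacle here; the only thing one must be careful about is the normalization of the invariant form (so that $h^{\vee}=2$ appears correctly in the denominator $2(k+2)$) and the identification of the Casimir eigenvalue on $U^{i}$.
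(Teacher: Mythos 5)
Your proof is correct. The paper does not prove this lemma itself (it is quoted from [DLY2]), but your decomposition $o(\omega)=o(\omega_{\mathrm{aff}})-o(\omega_{\gamma})$, with the Sugawara/Casimir eigenvalue $\frac{i(i+2)}{4(k+2)}$ on the top level and $o(\omega_{\gamma})v^{i,j}=\frac{(i-2j)^{2}}{4k}v^{i,j}$ from $h(0)v^{i,j}=(i-2j)v^{i,j}$, is exactly the standard computation, matching the one the paper itself carries out for $o(\omega_{\mathrm{aff}})$ in the proof of Proposition 3.1, and the final algebraic identity checks out.
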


Let $\sigma$ be an automorphism of Lie algebra $sl_2$ defined by $\sigma(h)=-h, \ \sigma(e)=f,\ \sigma(f)=e$. $\sigma$ can be lifted to an automorphism $\sigma$ of the vertex operator algebra $V(k,0)$ of order 2 in the following way:
$$\sigma(x_{1}(-n_{1})\cdots x_{s}(-n_{s})\1)=\sigma(x_{1})(-n_{1})\cdots \sigma(x_{s})(-n_{s})\1$$
for $x_{i}\in sl_2$ and $n_{i}>0$. Then $\sigma$ induces an automorphism of $L(k,0)$ as $\sigma$ preserves the unique maximal ideal $\mathcal{J}$, and the Virasoro element $\omega_{\gamma}$ is invariant under $\sigma$. Thus $\sigma$ induces an automorphism of the parafermion vertex operator algebra $K_{0}$. In fact, $\sigma(\omega)=\omega,\ \sigma(W^{3})=-W^{3}$.

\begin{lem}\cite{DLY2}\label{lem:auto}
If $k\geq 3$, the automorphism group Aut$K_{0}=\langle \sigma \rangle$ is of order 2.
\end{lem}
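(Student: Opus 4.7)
The plan is to show that any VOA automorphism $\tau$ of $K_0$ coincides with $1$ or $\sigma$. By definition of a VOA automorphism, $\tau$ fixes both $\1$ and $\om$, hence commutes with every Virasoro mode $L(n)$; in particular it preserves each homogeneous subspace $(K_0)_n$ and its subspace of Virasoro primary vectors.

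The key step is to analyze the weight-$3$ component $(K_0)_3$. For $k\ge 3$ one verifies that $(K_0)_3$ is spanned by the Virasoro descendant $L(-1)\om$ and the weight-$3$ Virasoro primary $W^3$ recalled above, and that the primary subspace is one-dimensional. Since $\tau$ preserves Virasoro primaries, it must satisfy $\tau(W^3)=aW^3$ for some $a\in\C^{*}$. From the OPE of $W^3$ with itself, the highest-order pole produces a relation of the form $(W^3)_{5}W^3=\lambda\1$, where $\lambda$ equals, up to a fixed normalization, the squared norm of $W^3$ under the invariant bilinear form on $K_0$. Since $K_0$ is simple the invariant form is non-degenerate on each graded piece, so $\lambda\ne 0$. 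Applying $\tau$ to this identity and using $\tau(\1)=\1$ gives $a^{2}\lambda=\lambda$, hence $a=\pm 1$.

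For $k\ge 3$, $K_0$ is generated as a VOA by $W^3$ alone (as recalled from \cite{DLY2} in the preliminaries), so $\tau$ is determined by its value on $W^3$. Therefore $\tau$ is the identity when $a=1$, and coincides with the automorphism $\sigma$ (which sends $W^3\mapsto -W^3$) when $a=-1$; thus $\mathrm{Aut}\,K_0=\langle\sigma\rangle$ has order two. The main obstacle is to pin down the weight-$3$ picture, namely that the space of Virasoro primaries in $(K_0)_3$ is exactly one-dimensional and that the normalization constant $\lambda$ is non-zero; both rely on the explicit character and generator information for $K(sl_2,k)$ established in \cite{DLY2} and \cite{DLWY}, after which the rest of the argument is routine.
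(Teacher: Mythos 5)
This lemma is quoted from \cite{DLY2} and the paper gives no proof of its own; your argument is a correct reconstruction of the one in that reference (an automorphism fixes $\omega$, hence acts by a scalar $a$ on the one-dimensional space of weight-$3$ primaries spanned by $W^3$, the nonvanishing of $(W^3)_5W^3$ forces $a=\pm1$, and generation of $K_0$ by $W^3$ for $k\geq 3$ finishes it). Your derivation of $\lambda\neq 0$ from nondegeneracy of the invariant form on the graded piece $(K_0)_3$ (using that the primary $W^3$ is orthogonal to $L(-1)\omega$) is a clean substitute for the explicit computation of $(W^3)_5W^3$ in \cite{DLY2}, and the rest is sound.
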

\begin{rmk} If $k=1$, $K_{0}=\C \1$. If $k=2$, $K_0$ is generated by $\omega$. Thus the automorphism group Aut$K_0=\{1\}$ is trivial for $k=1$ and $k=2$. Therefore, by Lemma \ref{lem:auto},  we  only need to consider the orbifold of parafermion vertex operator algebra under the automorphism $\sigma$ for $k\geq 3$.
\end{rmk}

Let $K_{0}^{\sigma}$ be the ${\mathbb Z}_2$-orbifold vertex operator algebra, i.e., the fixed-point vertex operator subalgebra of $K_0$ under the automorphism $\sigma$. The following theorem gives the classification of the irreducible modules of $K_{0}^{\sigma}$ for $k\geq 3$ \cite{JW}.

 \begin{thm}\cite{JW}\label{thm:orbifold3'}
If $k=2n+1$, $n\geq 1$, there are $\frac{(k+1)(k+7)}{4}$ inequivalent irreducible modules of $K_{0}^{\sigma}$.
 If $k=2n$, $n\geq 2$, there are $\frac{(k^{2}+8k+28)}{4}$ inequivalent irreducible modules of $K_{0}^{\sigma}$. More precisely, if $k=2n+1$, $n\geq 1$, the set
 \begin{eqnarray*}
&& \{ W(k,i)^{j} \ \mbox{for} \  0\leq i\leq \frac{k-1}{2}, j=1,2,\\
&&(M^{i,j})^{s} \ \mbox{for} \  (i,j)=(i,\frac{i}{2}), i=2,4,6,\cdots,2n, \ \mbox{and} \ (i,j)=(2n+1,0), s=0,1,\\ && M^{i,0} \ \mbox{for} \ 1\leq i\leq \frac{k-1}{2},
  M^{i,j} \ \mbox{for} \ 3\leq i\leq k, \mbox{if} \  i=2m, 1\leq j\leq m-1, \mbox{if} \ i=2m+1, 1\leq j\leq m\} \\
 \end{eqnarray*} gives all inequivalent irreducible $K_{0}^{\sigma}$-modules. If $k=2n$, $n\geq 2$, the set
 \begin{eqnarray*}
&& \{ W(k,i)^{j} \ \mbox{for} \  0\leq i\leq \frac{k}{2}, j=1,2,  \widetilde{W(k,\frac{k}{2})}^{j} \ \mbox{for} \ j=1,2,\\
&&(M^{i,j})^{s} \ \mbox{for} \  (i,j)=(i,\frac{i}{2}), i=2,4,6,\cdots,2n, (i,j)=(n,0) \mbox{and} \ (i,j)=(2n,0), s=0,1,\\ && M^{i,0} \ \mbox{for} \ 1\leq i\leq \frac{k-2}{2},
  M^{i,j} \ \mbox{for} \ 3\leq i\leq k, \mbox{if} \  i=2m, 1\leq j\leq m-1, \mbox{if} \ i=2m+1, 1\leq j\leq m\} \\
 \end{eqnarray*} gives all inequivalent irreducible $K_{0}^{\sigma}$-modules.
\end{thm}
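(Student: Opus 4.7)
The plan is to apply the general orbifold theory for rational $C_{2}$-cofinite vertex operator algebras under a finite cyclic group action. By the results of Miyamoto-Tanabe and Carnahan-Miyamoto, for $G=\langle\sigma\rangle\cong\Z_{2}$ every irreducible $K_{0}^{\sigma}$-module arises as an irreducible summand of the restriction of either an irreducible (untwisted) $K_{0}$-module or an irreducible $\sigma$-twisted $K_{0}$-module. Moreover, a $\sigma$-stable irreducible module $M$ splits as $M=M^{+}\oplus M^{-}$ into two inequivalent irreducible $K_{0}^{\sigma}$-modules, while a non-$\sigma$-stable $M$ restricts irreducibly and becomes isomorphic to $\sigma\cdot M$ as a $K_{0}^{\sigma}$-module. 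Since $K_{0}$ is rational and $C_{2}$-cofinite by \cite{ALY1, ALY2}, this framework applies, and the task splits into three steps: analyze the $\sigma$-orbits on the untwisted modules, construct and orbit-analyze the $\sigma$-twisted modules, and count.

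First, I would pin down the $\sigma$-action on the irreducible untwisted $K_{0}$-modules $\{M^{i,j}\}$, whose representatives by \cite{DLY2, ALY1} are $M^{i,j}$ with $1\leq i\leq k$, $0\leq j\leq i-1$, together with $K_{0}=M^{0,0}$. Since $\sigma(h)=-h$ and $\sigma$ interchanges $e$ and $f$, the top-level $h(0)$-eigenvalue $i-2j$ appearing in Lemma~\ref{lem:lowest} flips sign under the twist; tracking this through the embedding $M^{i,j}\hookrightarrow L(k,i)$ shows $\sigma\cdot M^{i,j}\cong M^{i,i-j}$, modulo the identification $M^{i,j}\cong M^{k-i,k-i+j}$. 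The $\sigma$-fixed representatives are then exactly the modules $M^{2m,m}$, and for even $k$ one additional fixed class appears at level $k/2$ via the equivalence $M^{i,j}\sim M^{k-i,k-i+j}$. Each fixed module yields the pair $(M^{i,j})^{s}$, $s=0,1$; each $\sigma$-orbit of size two yields a single $K_{0}^{\sigma}$-module, still denoted $M^{i,j}$ in the statement.

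Second, I would classify the irreducible $\sigma$-twisted $K_{0}$-modules. The natural route is to build them from $\sigma$-twisted $L(k,0)$-modules, which are equivalent to integrable highest weight modules at level $k$ for the twisted affine Kac-Moody algebra attached to the involution $\sigma$ of $sl_{2}$, i.e.\ $A_{2}^{(2)}$. Restricting such a twisted $L(k,0)$-module to $K_{0}$ and decomposing with respect to the Heisenberg subalgebra $M(k)$ produces the parafermion twisted modules $W(k,i)$. For $k=2n$, at the central label $i=k/2$ the Heisenberg decomposition supports lowest-weight vectors in both grade $0$ and grade $\tfrac{1}{2}$, producing an additional family $\widetilde{W(k,k/2)}$ beyond the naive $W(k,k/2)$. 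Computing the $\sigma$-action on these twisted modules and applying the same splitting/pairing dichotomy yields the labels $W(k,i)^{j}$ and $\widetilde{W(k,k/2)}^{j}$.

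Finally, assembling the three contributions and counting orbits gives the stated totals $\tfrac{(k+1)(k+7)}{4}$ for $k=2n+1$ and $\tfrac{k^{2}+8k+28}{4}$ for $k=2n$. The main obstacle will be the even-$k$ case: one must verify that $\widetilde{W(k,k/2)}$ is genuinely non-isomorphic to $W(k,k/2)$ as a $\sigma$-twisted $K_{0}$-module and that both are $\sigma$-stable, so that the four resulting $K_{0}^{\sigma}$-modules at this level are pairwise inequivalent. I expect this distinction to be detected by comparing the lowest conformal weights of the grade-$0$ and grade-$\tfrac{1}{2}$ summands via Lemma~\ref{lem:lowest} and an explicit intertwining-operator analysis, together with an application of the Zhu algebra to rule out spurious coincidences.
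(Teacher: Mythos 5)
First, a point of reference: this paper does not prove Theorem~\ref{thm:orbifold3'} at all --- it is quoted verbatim from \cite{JW} --- so there is no internal proof to compare against. Your outline does follow the same general architecture that \cite{JW} uses and that the present paper replays for $L(k,0)^{\sigma}$ in Section~3: invoke the quantum Galois/orbifold dichotomy ($\sigma$-stable irreducibles split into two inequivalent pieces, non-stable ones pair up), determine the $\sigma$-action on the $M^{i,j}$ via $\sigma\cdot M^{i,j}\cong M^{i,i-j}$ modulo the identification $M^{i,j}\cong M^{k-i,k-i+j}$, construct the twisted sector from $\sigma$-twisted $L(k,0)$-modules restricted against the Heisenberg algebra, and count. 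Your identification of the fixed classes (the $M^{2m,m}$, the algebra $M^{k,0}=K_0$ itself, and the extra class at $i=k/2$ for even $k$) is correct and matches the list in the statement.

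There is, however, one concrete error in your step for the twisted sector. The involution $\sigma(h)=-h$, $\sigma(e)=f$, $\sigma(f)=e$ is an \emph{inner} automorphism of $sl_{2}$ (every automorphism of $A_{1}$ is inner, since the diagram has no symmetry), so the $\sigma$-twisted affinization is isomorphic to $A_{1}^{(1)}$ in a principal-type realization --- it is emphatically not $A_{2}^{(2)}$, which is the twisted affine algebra attached to the diagram involution of $sl_{3}$. The correct mechanism, used in Lemma~\ref{lem:twisted} and Proposition~\ref{prop:twisted1} of this paper, is Li's $\Delta$-operator $\Delta(h'',z)$ with $h''=\tfrac14(e+f)$: it turns each untwisted $L(k,i)$ into an irreducible $\sigma$-twisted module $\overline{L(k,i)}$, giving exactly $k+1$ of them in bijection with the untwisted modules. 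Taken literally, your $A_{2}^{(2)}$ route would produce the wrong set (and the wrong number) of twisted modules and the count $\tfrac{(k+1)(k+7)}{4}$ would not come out. Beyond that, the genuinely hard content of the theorem --- proving that for even $k$ the level-$\tfrac{k}{2}$ twisted sector carries two inequivalent $\sigma$-twisted $K_{0}$-modules $W(k,\tfrac{k}{2})$ and $\widetilde{W(k,\tfrac{k}{2})}$ (lowest weight vector in grade $0$ versus grade $\tfrac12$), and that the resulting four $K_{0}^{\sigma}$-modules together with all the listed untwisted ones are pairwise inequivalent --- is only deferred in your last paragraph to ``an explicit intertwining-operator analysis'' and the Zhu algebra; this is where the actual work of \cite{JW} lies, so as written the proposal is a correct plan with one wrong ingredient rather than a proof.
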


\begin{rmk}\label{rmk:orbifold3'} With the notations in  Theorem \ref{thm:orbifold3'}, we call $W(k,i)^{j}$ and $\widetilde{W(k,\frac{k}{2})}^{j}$ twisted type modules and  $(M^{i,j})^{s}, M^{i,j}$ untwisted modules of type $I$ and type $II$ respectively.

\end{rmk}

\section{Fusion rules for the $\Z_{2}$-orbifold of the affine vertex operator algebra $L(k,0)$
}\label{Sect:affine case}\def\theequation{3.\arabic{equation}}

In this section, we first recall the definition of weak $g$-twisted modules, $g$-twisted modules and admissible $g$-twisted modules following \cite{DLM3, DLM4}. Let $L(k,0)^{\sigma}$ be the $\Z_{2}$-orbifold vertex operator subalgebra of the affine vertex operator algebra $L(k,0)$, i.e., the fixed-point subalgebra of $L(k,0)$ under $\sigma$. We then classify and construct the irreducible modules for $L(k,0)^{\sigma}$. Furthermore, we determine the contragredient modules of irreducible $L(k,0)^{\sigma}$-modules and the fusion rules for the vertex operator algebra $L(k,0)^{\sigma}$.

Let $\left(V,Y,1,\omega\right)$ be a vertex operator algebra (see
\cite{FLM}, \cite{LL}) and $g$ an automorphism of $V$ with finite order $T$.  Let $W\left\{ z\right\} $
denote the space of $W$-valued formal series in arbitrary complex
powers of $z$ for a vector space $W$. Denote the decomposition of $V$ into eigenspaces with respect to the action of $g$ by $$V=\bigoplus_{r\in\Z}V^{r},$$
where $V^{r}=\{v\in V|\ gv=e^{-\frac{2\pi ir}{T}}v\},$ $i=\sqrt{-1}$.

\begin{definition}A \emph{weak $g$-twisted $V$-module} $M$ is
a vector space with a linear map
\[
Y_{M}:V\to\left(\text{End}M\right)\{z\}
\]

\[
v\mapsto Y_{M}\left(v,z\right)=\sum_{n\in\mathbb{Q}}v_{n}z^{-n-1}\ \left(v_{n}\in\mbox{End}M\right)
\]
which satisfies the following conditions for $0\leq r\leq T-1$, $u\in V^{r}\ ,v\in V, w\in M$:

\[ Y_{M}\left(u,z\right)=\sum_{n\in\frac{r}{T}+\mathbb{Z}}u_{n}z^{-n-1}
\]

\[
u_{n}w=0\ {\rm for} \ n\gg0,
\]

\[
Y_{M}\left(\mathbf{1},z\right)=Id_{M},
\]

\[
z_{0}^{-1}\text{\ensuremath{\delta}}\left(\frac{z_{1}-z_{2}}{z_{0}}\right)Y_{M}\left(u,z_{1}\right)
Y_{M}\left(v,z_{2}\right)-z_{0}^{-1}\delta\left(\frac{z_{2}-z_{1}}{-z_{0}}\right)Y_{M}\left(v,z_{2}\right)Y_{M}\left(u,z_{1}\right)
\]

\[
=z_{1}^{-1}\left(\frac{z_{2}+z_{0}}{z_{1}}\right)^{\frac{r}{T}}\delta\left(\frac{z_{2}+z_{0}}{z_{1}}\right)Y_{M}\left(Y\left(u,z_{0}\right)v,z_{2}\right),
\]
 where $\delta\left(z\right)=\sum_{n\in\mathbb{Z}}z^{n}$. \end{definition}
 The following identities are the consequences of the twisted-Jacobi identity \cite{DLM3} (see also \cite{Ab}, \cite{DJ}).
 \begin{eqnarray}[u_{m+\frac{r}{T}},v_{n+\frac{s}{T}}]=\sum_{i=0}^{\infty}\binom{m+\frac{r}{T}}{i} (u_{i}v)_{m+n+\frac{r+s}{T}-i},\label{eq:3.1.}\end{eqnarray}
 \begin{eqnarray}\sum_{i\geq 0}\binom{\frac{r}{T}}{i}(u_{m+i}v)_{n+\frac{r+s}{T}-i}=\sum_{i\geq 0}(-1)^{i}\binom{m}{i}(u_{m+\frac{r}{T}-i}v_{n+\frac{s}{T}+i}-(-1)^{m}v_{m+n+\frac{s}{T}-i}u_{\frac{r}{T}+i}),\label{eq:3.2.}\end{eqnarray}
 where $u\in V^{r}, \ v\in V^{s},\ m,n\in \Z$.

\begin{definition}

A \emph{$g$-twisted $V$-module} is a weak $g$-twisted $V$-module\emph{
}$M$ which carries a $\mathbb{C}$-grading $M=\bigoplus_{\lambda\in\mathbb{C}}M_{\lambda},$
where $M_{\lambda}=\{w\in M|L(0)w=\lambda w\}$ and $L(0)$ is one of the coefficient operators of $Y(\omega,z)=\sum_{n\in\mathbb{Z}}L(n)z^{-n-2}.$
Moreover we require
that $\dim M_{\lambda}$ is finite and for fixed $\lambda,$ $M_{\lambda+\frac{n}{T}}=0$
for all small enough integers $n.$

\end{definition}

\begin{definition}An \emph{admissible $g$-twisted $V$-module} $M=\oplus_{n\in\frac{1}{T}\mathbb{Z}_{+}}M\left(n\right)$
is a $\frac{1}{T}\mathbb{Z}_{+}$-graded weak $g$-twisted module
such that $u_{m}M\left(n\right)\subset M\left(\mbox{wt}u-m-1+n\right)$
for homogeneous $u\in V$ and $m,n\in\frac{1}{T}\mathbb{Z}.$ $ $

\end{definition}

If $g=Id_{V}$, we have the notions of weak, ordinary and admissible
$V$-modules \cite{DLM3}.

\begin{definition}A vertex operator algebra $V$ is called \emph{$g$-rational}
if the admissible $g$-twisted module category is semisimple. \end{definition}
\begin{rmk} Since $K_0$ is a rational vertex operator algebra, $K_{0}^{\sigma}$ is $C_2$-cofinite and rational \cite{M2}, \cite{CM}, \cite{CKLR}, and $K_{0}$ is $\sigma$-rational \cite{DH}.
\end{rmk}

The following lemma about $g$-rational vertex operator algebras is
well known \cite{DLM3}.

\begin{lem} If $V$ is  $g$-rational, then

(1) Any irreducible admissible $g$-twisted $V$-module $M$ is a $g$-twisted $V$-module, and there exists a $\lambda\in\mathbb{C}$
such that $M=\oplus_{n\in\frac{1}{T}\mathbb{Z}_{+}}M_{\lambda+n}$
where $M_{\lambda}\neq0.$ And $\lambda$ is called the conformal weight
of $M;$

(2) There are only finitely many irreducible admissible $g$-twisted
$V$-modules up to isomorphism. \end{lem}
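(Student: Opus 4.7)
These are classical facts about $g$-rational vertex operator algebras due to Dong--Li--Mason; my sketch follows the strategy of \cite{DLM3,DLM4}. The two key tools are (i) the generalized $L(0)$-eigenspace decomposition of an irreducible admissible $g$-twisted module, and (ii) the $g$-twisted Zhu algebra $A_g(V)$ together with its bijection with irreducible top levels.

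\emph{Part (1).} Because $g$ has finite order and $\omega$ is the canonical Virasoro vector of $V$, $g$ fixes $\omega$, so $\omega\in V^{0}$. By the definition of an admissible $g$-twisted module, $L(0)=\omega_{1}$ then preserves each graded piece $M(n)$. I would consider the generalized $L(0)$-eigenspace decomposition
$$M=\bigoplus_{\mu\in\C}M[\mu],\qquad M[\mu]=\{w\in M\mid (L(0)-\mu)^{N}w=0\text{ for some }N\}.$$
Using the commutator $[L(0),v_{m}]=(\mathrm{wt}(v)-m-1)v_{m}$ for homogeneous $v\in V$, each aggregated subspace $\bigoplus_{n\in\frac{1}{T}\Z}M[\lambda+n]$ is stable under all operators $v_{m}$, hence is a weak $g$-twisted $V$-submodule. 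Irreducibility of $M$ forces a single such coset to be nonzero, so all generalized eigenvalues lie in one $\lambda+\frac{1}{T}\Z$. To upgrade this to honest eigenspaces and to obtain finite-dimensionality of the homogeneous subspaces, I would invoke the $g$-twisted Zhu algebra $A_{g}(V)$: by \cite{DLM3,DLM4} the lowest nonzero graded piece $M(0)$ is an irreducible $A_{g}(V)$-module, and $g$-rationality implies $A_{g}(V)$ is a finite-dimensional semisimple associative algebra; consequently $M(0)$ is finite-dimensional and $L(0)$ acts on it by a single scalar $\lambda$. Propagating along the admissible grading via the compatible shift $L(0)$-eigenvalue $\mapsto$ degree yields the desired decomposition $M=\bigoplus_{n\in\frac{1}{T}\Z_{+}}M_{\lambda+n}$ with each $M_{\lambda+n}$ finite-dimensional and $M_{\lambda}\neq 0$, i.e.\ $M$ is a $g$-twisted module with conformal weight $\lambda$.

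\emph{Part (2).} The Dong--Li--Mason correspondence $M\mapsto M(0)$ gives a bijection between isomorphism classes of irreducible admissible $g$-twisted $V$-modules and isomorphism classes of irreducible $A_{g}(V)$-modules. Since $g$-rationality makes $A_{g}(V)$ a finite-dimensional semisimple associative algebra, it has only finitely many irreducible modules up to isomorphism, whence the claimed finiteness.

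\emph{Main obstacle.} The delicate input is the finite-dimensionality and semisimplicity of $A_{g}(V)$ from the bare hypothesis of $g$-rationality; this is the crux of \cite{DLM3,DLM4} and rests on combining the semisimplicity of the whole admissible $g$-twisted category with a careful analysis of the Zhu algebra functor. Once $A_{g}(V)$ is known to be finite-dimensional semisimple, both (1) and (2) are essentially bookkeeping from the Zhu correspondence.
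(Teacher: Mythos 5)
The paper offers no proof of this lemma: it is stated as ``well known'' and attributed to \cite{DLM3}, so there is nothing internal to compare your argument against. Your sketch is the standard Dong--Li--Mason argument from that reference, and the overall route (twisted Zhu algebra $A_g(V)$, the bijection $M\mapsto M(0)$ between irreducible admissible $g$-twisted modules and irreducible $A_g(V)$-modules, semisimplicity and finite-dimensionality of $A_g(V)$ under $g$-rationality, $[\omega]$ central so that $L(0)$ acts on $M(0)$ by a scalar $\lambda$) is the correct one; part (2) is exactly the finiteness of the set of irreducibles of a finite-dimensional semisimple algebra.

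Two points in part (1) are thinner than they should be. First, you invoke the generalized $L(0)$-eigenspace decomposition of $M$ at the outset, but that decomposition is not available a priori: $L(0)$ preserves each $M(n)$, and unless $M(n)$ is already known to be finite-dimensional (which is what you are trying to prove), $L(0)$ need not act locally finitely on $M$. This paragraph is dispensable, since your Zhu-algebra argument supplies the eigenvalue $\lambda$ on $M(0)$ directly and the commutator $[L(0),v_m]=(\mathrm{wt}\,v-m-1)v_m$ then forces $L(0)=\lambda+n$ on $M(n)$ because $M$ is generated by $M(0)$. Second, ``propagating along the admissible grading'' does not by itself give finite-dimensionality of $M(n)$ for $n>0$: the spanning set $\{u_{\mathrm{wt}\,u-1-n}w : u\in V,\ w\in M(0)\}$ is indexed by all of $V$. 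In the Dong--Li--Mason treatment this step rests on the higher-level twisted Zhu algebras $A_{g,n}(V)$ (each $M(n)$ is a module over $A_{g,n}(V)$, and $g$-rationality forces these algebras to be finite-dimensional semisimple as well), not on $A_g(V)=A_{g,0}(V)$ alone. Since this is a citation-level lemma a sketch is acceptable, but you should either cite the $A_{g,n}(V)$ machinery explicitly or acknowledge that the finite-dimensionality of the higher graded pieces is part of the ``crux'' you defer to the literature.
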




Let $M=\bigoplus_{n\in\frac{1}{T}\mathbb{Z}_{+}}M(n)$ be an admissible $g$-twisted $V$-module, the contragredient module $M^{'}$ is defined as follows: $M'=\bigoplus_{n\in\frac{1}{T}\mathbb{Z}_{+}}M(n)^{*}$, where $M(n)^{*}=\mbox{Hom}_{\mathbb{C}}(M(n),\mathbb{C}).$
The vertex
operator $Y_{M'}(v,z)$ is defined for $v\in V$ via
\begin{eqnarray}\label{contragredient}
\langle Y_{M'}(v,z)f,u\rangle=\langle f,Y_{M}(e^{zL(1)}(-z^{-2})^{L(0)}v,z^{-1})u\rangle,\label{eq:3.0}
\end{eqnarray}
where $\langle f,w\rangle=f(w)$ is the natural paring $M'\times M\to\mathbb{C}.$

\begin{rmk} $(M^{'}, Y_{M^{'}})$ is an admissible $g^{-1}$-twisted $V$-module \cite{FHL}. One can also define the contragredient module $M^{'}$ for a $g$-twisted $V$-module $M$. In this case, $M^{'}$ is a $g^{-1}$-twisted $V$-module. Moreover, $M$ is irreducible if and only if $M^{'}$ is irreducible.
\end{rmk}





Now we  recall from \cite{FHL} the notions of intertwining operators and fusion rules.

\begin{definition} Let $(V,\ Y)$ be a vertex operator algebra and
let $(W^{1},\ Y^{1}),\ (W^{2},\ Y^{2})$ and $(W^{3},\ Y^{3})$ be
$V$-modules. An \emph{intertwining operator} of type $\left(\begin{array}{c}
W^{3}\\
W^{1\ }W^{2}
\end{array}\right)$ is a linear map
\[
I(\cdot,\ z):\ W^{1}\to\text{\ensuremath{\mbox{Hom}(W^{2},\ W^{3})\{z\}}}
\]

\[
u\to I(u,\ z)=\sum_{n\in\mathbb{Q}}u_{n}z^{-n-1}
\]
 satisfying:

(1) for any $u\in W^{1}$ and $v\in W^{2}$, $u_{n}v=0$ for $n$
sufficiently large;

(2) $I(L(-1)v,\ z)=\frac{d}{dz}I(v,\ z)$;

(3) (Jacobi identity) for any $u\in V,\ v\in W^{1}$

\[
z_{0}^{-1}\delta\left(\frac{z_{1}-z_{2}}{z_{0}}\right)Y^{3}(u,\ z_{1})I(v,\ z_{2})-z_{0}^{-1}\delta\left(\frac{-z_{2}+z_{1}}{z_{0}}\right)I(v,\ z_{2})Y^{2}(u,\ z_{1})
\]
\[
=z_{2}^{-1}\left(\frac{z_{1}-z_{0}}{z_{2}}\right)I(Y^{1}(u,\ z_{0})v,\ z_{2}).
\]

The space of all intertwining operators of type $\left(\begin{array}{c}
W^{3}\\
W^{1}\ W^{2}
\end{array}\right)$ is denoted by
$$I_{V}\left(\begin{array}{c}
W^{3}\\
W^{1}\ W^{2}
\end{array}\right).$$ Let $N_{W^{1},\ W^{2}}^{W^{3}}=\dim I_{V}\left(\begin{array}{c}
W^{3}\\
W^{1}\ W^{2}
\end{array}\right)$. These integers $N_{W^{1},\ W^{2}}^{W^{3}}$ are usually called the
\emph{fusion rules}. \end{definition}




\begin{definition} Let $V$ be a vertex operator algebra, and $W^{1},$
$W^{2}$ be two $V$-modules. A module $(W,I)$, where $I\in I_{V}\left(\begin{array}{c}
\ \ W\ \\
W^{1}\ \ W^{2}
\end{array}\right),$ is called a \emph{tensor product} (or fusion product) of $W^{1}$
and $W^{2}$ if for any $V$-module $M$ and $\mathcal{Y}\in I_{V}\left(\begin{array}{c}
\ \ M\ \\
W^{1}\ \ W^{2}
\end{array}\right),$ there is a unique $V$-module homomorphism $f:W\rightarrow M,$ such
that $\mathcal{Y}=f\circ I.$ As usual, we denote $(W,I)$ by $W^{1}\boxtimes_{V}W^{2}.$
\end{definition}

\begin{rmk}
It is well known that if $V$ is rational, then for any two irreducible
$V$-modules $W^{1}$ and $W^{2},$ the fusion product $W^{1}\boxtimes_{V}W^{2}$ exists and
$$
W^{1}\boxtimes_{V}W^{2}=\sum_{W}N_{W^{1},\ W^{2}}^{W}W,
$$
 where $W$ runs over the set of equivalence classes of irreducible
$V$-modules.
\end{rmk}
Fusion rules have the following symmetric property \cite{FHL}.

\begin{prop}\label{fusionsymm.}
Let $W^{i} (i=1,2,3)$ be $V$-modules. Then
$$N_{W^{1},W^{2}}^{W^{3}}=N_{W^{2},W^{1}}^{W^{3}}, \ N_{W^{1},W^{2}}^{W^{3}}=N_{W^{1},(W^{3})^{'}}^{(W^{2})^{'}}.$$
\end{prop}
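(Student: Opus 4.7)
The plan is to establish both symmetries via explicit linear isomorphisms between the relevant intertwining-operator spaces, following the scheme in \cite{FHL}. In each case I will write down a candidate map, verify the Jacobi identity and the $L(-1)$-derivative property, and observe that the construction is its own two-sided inverse (up to the canonical identifications $(W^i)''\cong W^i$), so that bijectivity is automatic.

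For the first identity $N_{W^{1},W^{2}}^{W^{3}}=N_{W^{2},W^{1}}^{W^{3}}$, given $I\in I_{V}\binom{W^{3}}{W^{1}\,W^{2}}$, I set
$$\Omega(I)(v,z)u\;=\;e^{zL(-1)}\,I(u,-z)\,v,\qquad u\in W^{1},\,v\in W^{2}.$$
To show $\Omega(I)\in I_{V}\binom{W^{3}}{W^{2}\,W^{1}}$, I will use the conjugation formula $e^{z_{0}L(-1)}Y_{W^{3}}(a,w)e^{-z_{0}L(-1)}=Y_{W^{3}}(a,w+z_{0})$ together with the $L(-1)$-covariance of $I$, reducing the Jacobi identity for $\Omega(I)$ to the Jacobi identity for $I$ after the substitutions $z_{2}\mapsto -z_{2}$, $z_{0}\mapsto -z_{0}$. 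Since $\Omega$ is an involution up to sign, it is a linear bijection, which proves the first equality.

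For the second identity $N_{W^{1},W^{2}}^{W^{3}}=N_{W^{1},(W^{3})'}^{(W^{2})'}$, given $I\in I_{V}\binom{W^{3}}{W^{1}\,W^{2}}$, I define a map $A(I):W^{1}\to \mathrm{Hom}((W^{3})',(W^{2})')\{z\}$ by
$$\la A(I)(u,z)f,v\ra\;=\;\la f,\,I\!\left(e^{zL(1)}(-z^{-2})^{L(0)}u,\,z^{-1}\right)v\ra$$
for $u\in W^{1}$, $v\in W^{2}$, $f\in (W^{3})'$. This is the intertwining-operator analog of \eqref{eq:3.0}. To prove $A(I)\in I_{V}\binom{(W^{2})'}{W^{1}\,(W^{3})'}$ I will verify the truncation and $L(-1)$-derivative axioms using the standard identities relating $e^{zL(1)}(-z^{-2})^{L(0)}$ to the conformal operators, and transport the Jacobi identity for $I$ across the pairing using the contragredient definitions of $Y_{(W^{3})'}$ and $Y_{(W^{2})'}$ and the change of variable $z\mapsto z^{-1}$. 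Applying $A$ twice and identifying $(W^{i})''\cong W^{i}$ supplies a two-sided inverse, so $A$ is a linear isomorphism.

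The main obstacle is the bookkeeping in the second step: tracking the action of $(-z^{-2})^{L(0)}$ and $e^{zL(1)}$ on components $u_{n}$ when the $L(0)$-weights of $W^{1}$ are non-integral requires care with branch conventions for the formal fractional powers, and matching the three-variable $\delta$-function expansions of the two Jacobi identities after the $z\mapsto z^{-1}$ substitution is the one genuinely technical calculation. The first identity and all bijectivity statements are essentially formal consequences of the self-inverse nature of the constructions.
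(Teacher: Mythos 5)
The paper gives no proof of this proposition — it simply cites \cite{FHL} — and your proposal is precisely the standard argument from that reference: the skew-symmetry map $\Omega(I)(v,z)u=e^{zL(-1)}I(u,-z)v$ for the first equality and the adjoint intertwining operator transported through the contragredient pairing for the second. Your outline is correct and matches the cited source; the only point to phrase carefully is that $\Omega$ is not literally an involution but comes in two branch-dependent versions ($-z$ read as $e^{\pm\pi i}z$) that are mutually inverse, which still yields the bijection you need.
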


We will use the following lemma from \cite{DL} later.

\begin{lem}\label{intertwining} Let $V$ be a vertex operator algebra, and let $W^{1}$ and $W^{2}$ be irreducible $V$-modules and $W^{3}$ a $V$-module. If $I$ is a nonzero intertwining operator of type $\left(\begin{array}{c}
W^{3}\\
W^{1}\ W^{2}
\end{array}\right)$, then $I(u,z)v\neq 0$ for any nonzero vectors $u\in W^{1}$ and $v\in W^{2}$.
\end{lem}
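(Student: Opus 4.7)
The plan is to reduce the lemma to two submodule arguments and then close with the irreducibility hypotheses on $W^{1}$ and $W^{2}$. For a fixed nonzero $v\in W^{2}$ I would introduce
\[
N_{v}=\{u\in W^{1}\mid I(u,z)v=0\}\quad\text{and}\quad M=\{v'\in W^{2}\mid I(u',z)v'=0\text{ for every }u'\in W^{1}\}.
\]
Both are obviously subspaces. Granted that $N_{v}$ is a $V$-submodule of $W^{1}$ and that $M$ is a $V$-submodule of $W^{2}$, the conclusion follows in a few lines: $I\neq 0$ forces $M\neq W^{2}$, so $M=0$ by irreducibility of $W^{2}$; hence for the given nonzero $v$ some $u'\in W^{1}$ satisfies $I(u',z)v\neq 0$, so $N_{v}\neq W^{1}$, and irreducibility of $W^{1}$ then forces $N_{v}=0$, which is precisely the statement of the lemma.

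The technical heart of the plan is to show that $N_{v}$ is $V$-stable, since the naive commutator formula $[a_{m},I(u,z)]=\sum_{i\geq 0}\binom{m}{i}z^{m-i}I(a_{i}u,z)$ leaves uncontrolled terms $I(a_{i}u,z)v$ on the right. Instead, my plan is to invoke the weak associativity of intertwining operators, a standard consequence of the Jacobi identity in the definition: for every $a\in V$ and $u\in W^{1}$ there exists $N\in\Z_{\geq 0}$ such that
\[
(z_{0}+z_{2})^{N}Y^{3}(a,z_{0}+z_{2})I(u,z_{2})=(z_{0}+z_{2})^{N}I(Y^{1}(a,z_{0})u,z_{2}).
\]
Applying both sides to $v$ for $u\in N_{v}$ kills the left-hand side, so $(z_{0}+z_{2})^{N}I(Y^{1}(a,z_{0})u,z_{2})v=0$. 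Because $W^{1}$ is an ordinary $V$-module, the truncation $a_{n}u=0$ for $n\gg 0$ ensures that $I(Y^{1}(a,z_{0})u,z_{2})v$ is a Laurent series in $z_{0}$ with only finitely many negative powers, with coefficients in $W^{3}\{z_{2}\}$. On such a space multiplication by the polynomial $(z_{0}+z_{2})^{N}$ is injective (a leading-$z_{0}$-coefficient induction), so $I(Y^{1}(a,z_{0})u,z_{2})v=0$; extracting the coefficient of $z_{0}^{-n-1}$ yields $I(a_{n}u,z)v=0$ for every $n\in\Z$, i.e.\ $a_{n}u\in N_{v}$.

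That $M$ is a $V$-submodule is the easier half. For $v'\in M$, any $a\in V$, $m\in\Z$, and any $u'\in W^{1}$, the commutator formula above gives
\[
I(u',z)a_{m}v'=a_{m}I(u',z)v'-\sum_{i\geq 0}\binom{m}{i}z^{m-i}I(a_{i}u',z)v'=0,
\]
because $v'\in M$ kills both $I(u',z)v'$ and the terms $I(a_{i}u',z)v'$ (the latter since $a_{i}u'\in W^{1}$). Hence $a_{m}v'\in M$, which completes the plan. The only nontrivial obstacle is the $V$-stability of $N_{v}$, and it is precisely the $(z_{0}+z_{2})^{N}$-divisibility argument coming from weak associativity that overcomes it.
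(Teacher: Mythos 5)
Your argument is correct and is essentially the standard proof of this lemma (the paper itself gives no proof, citing \cite{DL}, where precisely this two-submodule argument --- the annihilator $M\subseteq W^{2}$ handled by the commutator formula, the annihilator $N_{v}\subseteq W^{1}$ handled by weak associativity and injectivity of multiplication by $(z_{0}+z_{2})^{N}$ --- appears). The only cosmetic slip is in your statement of weak associativity: the integer $N$ is chosen so that $z^{N}Y^{2}(a,z)v$ involves no negative powers of $z$, so it depends on $a$ and the fixed vector $v\in W^{2}$ rather than on $u$, and the identity is one of $W^{3}\{z_{2}\}$-valued series obtained after applying both sides to $v$ (the unapplied operator composition need not be defined); since you immediately apply it to $v$, the argument is unaffected.
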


We fix some notations. Let $W^{1},W^{2},W^{3}$ be irreducible $L(k,0)^{\sigma}$-modules. In this section, we use $I\left(\begin{array}{c}W^3\\
W^{1}\,W^{2}\end{array}\right)$ to denote the space  $I_{L(k,0)^{\sigma}}\left(\begin{array}{c}W^3\\
W^{1}\,W^{2}\end{array}\right)$ of all intertwining operators
of type $\left(\begin{array}{c}W^3\\
W^{1}\,W^{2}\end{array}\right)$, and use $W^{1}\boxtimes W^{2}$ to denote the fusion product $W^{1}\boxtimes_{L(k,0)^{\sigma}}W^{2}$
for simplicity. We recall the fusion rules for the affine vertex operator algebra of type $A_1^{(1)}$ \cite{TK} for later use.


\begin{lem}\label{affine} $$L(k,i)\boxtimes_{L(k,0)} L(k,j)=\sum\limits_{l} L(k,l),$$
where $|i-j|\leq l\leq i+j, \ i+j+l\in 2\mathbb{Z},\ i+j+l\leq 2k.$\\
\end{lem}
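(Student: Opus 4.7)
The plan is to apply Frenkel--Zhu's bimodule theorem, which computes fusion rules for rational vertex operator algebras as dimensions of certain Hom spaces over the Zhu algebra. The first step is to identify $A(L(k,0))$. One has a natural surjection $U(sl_2) \twoheadrightarrow A(L(k,0))$ sending $a$ to the class of $a(-1)\mathbf{1}$, and the only nontrivial relation is inherited from the null vector $e(-1)^{k+1}\mathbf{1}$; a direct Zhu-algebra computation shows that its image enforces a level-$k$ integrability condition (essentially $e^{k+1}=0$ modulo polynomial relations in $h$). Consequently the irreducible $A(L(k,0))$-modules are precisely the finite-dimensional simple $sl_2$-modules $U^l$ of dimension $l+1$ for $0\leq l\leq k$, matching the top levels of the irreducible $L(k,0)$-modules $L(k,l)$.

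Next, I would invoke the Frenkel--Zhu description of the $A(L(k,0))$-bimodule $A(L(k,i))$ attached to $L(k,i)$: up to integrability-type relations it is generated by its top level $U^i$ with left and right $sl_2$-actions coming from the zero modes. The Frenkel--Zhu formula then gives
$$N_{L(k,i),L(k,j)}^{L(k,l)} = \dim \mathrm{Hom}_{A(L(k,0))}\bigl(A(L(k,i)) \otimes_{A(L(k,0))} U^j,\, U^l\bigr).$$
Ignoring the level-$k$ relation, the left module $A(L(k,i)) \otimes_{A(L(k,0))} U^j$ is isomorphic to $U^i \otimes U^j$ as an $sl_2$-module, so the classical Clebsch--Gordan decomposition $U^i \otimes U^j \cong \bigoplus_{l=|i-j|}^{i+j} U^l$ with $l\equiv i+j \pmod 2$ immediately supplies the triangle and parity constraints $|i-j| \leq l \leq i+j$ and $i+j+l \in 2\mathbb{Z}$.

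The main obstacle is extracting the level truncation $i+j+l \leq 2k$, which is the genuinely affine condition and requires careful tracking of the null-vector relation through the bimodule tensor product. For a highest weight vector $v_l$ of the $U^l$ summand inside $U^i \otimes U^j$, I would compute the action of the image of $e(-1)^{k+1}\mathbf{1}$ after transport across the bimodule and verify that it acts as a nonzero scalar multiple of $v_l$ exactly when $i+j+l > 2k$, forcing the associated Hom space to vanish in that regime. Conversely, for each $l$ satisfying all three constraints I would exhibit a nonzero intertwining operator of type $\left(\begin{array}{c} L(k,l) \\ L(k,i)\,L(k,j) \end{array}\right)$, for instance via the primary-field construction from the KZ connection on $\mathbb{P}^1$ used by Tsuchiya--Kanie, which yields $N_{ij}^l \geq 1$. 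Since the Frenkel--Zhu upper bound combined with the fact that each $U^l$ appears with multiplicity one in $U^i \otimes U^j$ forces $N_{ij}^l \leq 1$, the formula follows.
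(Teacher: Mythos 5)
The paper does not actually prove this lemma: it is imported wholesale from Tsuchiya--Kanie \cite{TK}, where the fusion rules are obtained from the KZ equations and the monodromy of conformal blocks on $\mathbb{P}^1$. Your proposal is therefore a genuinely different (and standard) route, via Zhu's algebra and the Frenkel--Zhu bimodule formula, and its skeleton is sound: $A(L(k,0))\cong U(sl_2)/\langle e^{k+1}\rangle$ has exactly the irreducibles $U^0,\dots,U^k$; the bimodule of the Weyl module satisfies $A(V(k,i))\otimes_{U(sl_2)}U^j\cong U^i\otimes U^j$, so Clebsch--Gordan gives the triangle and parity constraints together with the multiplicity bound $N_{ij}^l\leq 1$; and the existence half can be outsourced to an explicit primary-field construction. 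Two caveats are worth recording. First, the Frenkel--Zhu formula is only an \emph{upper} bound in general; equality requires either the rationality of $L(k,0)$ or Li's corrected version of the theorem, although your hybrid argument (Zhu-algebra upper bound plus explicit intertwining operators for the lower bound) sidesteps this. Second, and more substantively, the entire affine content of the statement --- the truncation $i+j+l\leq 2k$ --- is concentrated in the step you defer: the relevant relation is the singular vector $e(-1)^{k-i+1}v^{i,0}$ of $V(k,i)$ (equivalently, the modes of $e(-1)^{k+1}\mathbf{1}$ acting on $V(k,i)$), and one must carry out the computation showing that its image in $A(V(k,i))\otimes_{U(sl_2)}U^j$ hits the highest weight vector of $U^l$ precisely when $i+j+l>2k$. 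That computation is known to work (it is done in Frenkel--Zhu's original paper for $A_1^{(1)}$), but as written your argument asserts rather than establishes it, so the proposal is a correct outline rather than a complete proof. What each approach buys: the paper's citation to \cite{TK} gives the result with analytic input but no algebraic mechanism, while your route is purely algebraic, localizes the level truncation in a single singular-vector computation, and generalizes more readily to other affine algebras.
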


We notice that since $L(k,0)$ is rational, $L(k,0)^{\sigma}$ is rational, and thus $L(k,0)$ is $\sigma$-rational. Then from \cite{DLM4}, we have the following result.

\begin{prop}\label{prop:twisted1}
 There are precisely $k+1$ inequivalent irreducible $\sigma$-twisted modules of $L(k,0)$.
\end{prop}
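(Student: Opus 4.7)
My plan is to explicitly realize the $\sigma$-twisted $L(k,0)$-modules using Li's $\Delta$-operator construction~\cite{L3}, exploiting the fact that $\sigma$ is an \emph{inner} automorphism of $sl_2$. First I would verify that $\sigma$ fixes $e+f$ and acts as $-1$ on the two-dimensional subspace $\mathbb{C}h \oplus \mathbb{C}(e-f)$; diagonalising $\mathrm{ad}((e+f)/4)$ on $sl_2$, one sees that its eigenvalues on that subspace are $\pm\tfrac{1}{2}$, which yields $\sigma = \exp(2\pi i\,\mathrm{ad}((e+f)/4))$ on $sl_2$. Setting $h_0 = \tfrac{1}{4}(e(-1)+f(-1))\mathbf{1} \in L(k,0)_1$, I would then check that $\sigma$ coincides with $e^{2\pi i (h_0)_0}$ as an automorphism of $L(k,0)$, and that $h_0$ fulfils the hypotheses of Li's theorem: $h_0$ is a weight-one primary vector, $(h_0)_1 h_0 = \tfrac{k}{8}\mathbf{1}$, $(h_0)_n h_0 = 0$ for $n \geq 2$, and $(h_0)_0$ acts semisimply with rational eigenvalues on every $L(k,0)$-module (since $(e+f)(0)$ is $sl_2$-conjugate to $h(0)$).

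Li's construction would then associate, to each irreducible $L(k,0)$-module $(M, Y_M)$, a $\sigma$-twisted $L(k,0)$-module
\[
\bigl(M,\ Y_M(\Delta(h_0, z)\,\cdot\,,z)\bigr),\qquad \Delta(h_0, z) = z^{(h_0)_0}\exp\!\left(\sum_{n\geq 1}\frac{(h_0)_n}{-n}(-z)^{-n}\right),
\]
and applying the same construction with $-h_0$ in place of $h_0$ would reverse this assignment up to isomorphism. Thus the $\Delta$-construction should furnish an equivalence between the category of (untwisted) $L(k,0)$-modules and the category of $\sigma$-twisted $L(k,0)$-modules, preserving irreducibility.

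Since $L(k,0)$ is rational with exactly the $k+1$ inequivalent irreducible modules $L(k,i)$, $0 \leq i \leq k$, this equivalence would yield $k+1$ inequivalent irreducible $\sigma$-twisted $L(k,0)$-modules. Combined with the $\sigma$-rationality of $L(k,0)$ noted in the preceding remark, which guarantees finiteness of the isomorphism classes of irreducible $\sigma$-twisted modules, these $k+1$ modules form a complete list. The main technical obstacle will be verifying that Li's $\Delta$-construction really is a categorical equivalence in this setting, rather than just a functor; this reduces to manipulations of $\Delta(h_0,z)$ together with the twisted Jacobi identities \eqref{eq:3.1.}--\eqref{eq:3.2.}, which are carried out in \cite{L3}.
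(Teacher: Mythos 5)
Your proposal is correct in substance but takes a genuinely different route from the paper. The paper's proof of this proposition is a pure counting argument: since $L(k,0)$ is $\sigma$-rational, the theorem of Dong--Li--Mason says the number of inequivalent irreducible $\sigma$-twisted modules equals the number of $\sigma$-stable irreducible untwisted modules, and all $k+1$ of the $L(k,i)$ are $\sigma$-stable because their lowest conformal weights $\frac{i(i+2)}{4(k+2)}$ are pairwise distinct. You instead exploit the innerness of $\sigma$ (your identification $\sigma=\exp(2\pi i\,\mathrm{ad}((e+f)/4))$ and the computation $(h_0)_1h_0=\tfrac{k}{8}\mathbf{1}$ are both correct) and invoke Li's $\Delta$-operator to transport irreducible untwisted modules to irreducible $\sigma$-twisted ones bijectively. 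Interestingly, the paper does exactly this construction --- with the same element $h''=\tfrac{1}{4}(e+f)$ --- but only \emph{after} the proposition, in Lemma \ref{lem:twisted} and Theorem \ref{thm:construct}, to exhibit the $k+1$ twisted modules explicitly; for the count itself it relies on the abstract DLM result. Your approach buys an explicit construction and inequivalence for free (an equivalence of categories preserves non-isomorphism), at the cost of having to justify that the $\Delta$-functor is essentially surjective; the paper's approach gets the exact count with no construction at all.

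One point to tighten: your final appeal to $\sigma$-rationality only gives \emph{finiteness} of the set of irreducible $\sigma$-twisted modules, which does not by itself show your list of $k+1$ is complete. Completeness should instead come from the inverse functor: given any irreducible admissible $\sigma$-twisted module $W$, the pair $(W, Y_W(\Delta(-h_0,z)\cdot,z))$ is an irreducible weak untwisted $L(k,0)$-module, and since $L(k,0)$ is regular every such module is one of the $L(k,i)$. With that step made explicit, your argument closes.
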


\begin{proof} Since $L(k,0)$ is $\sigma$-rational, from \cite{DLM4}, we know that the number of inequivalent irreducible $\sigma$-twisted modules of $L(k,0)$ is precisely the number of $\sigma$-stable irreducible untwisted modules of $L(k,0)$. Notice that $L(k,i)$ for $0\leq i\leq k$ exhaust all the irreducible modules for  $L(k,0)$ with the top level $U^{i}=\bigoplus_{j=0}^{i}\mathbb{C}v^{i,j}$. By direct calculation, we have
\begin{eqnarray}\label{lowest1}
o(\omega_{\mraff})v^{i,j}=\omega_{\mraff}(1)v^{i,j}=\frac{1}{2(k+2)}\Big(h(0)+\frac{1}{2}h(0)^{2}+2f(0)e(0)\Big)v^{i,j}=\frac{i(i+2)}{4(k+2)}v^{i,j}.
 \end{eqnarray}
We see that these lowest weights $\frac{i(i+2)}{4(k+2)}$ are pairwise different for $0\leq i\leq k$, which shows that $L(k,i)$ for $0\leq i\leq k$ are $\sigma$-stable irreducible modules. Thus there are totally $k+1$ inequivalent irreducible $\sigma$-twisted modules of $L(k,0)$.
\end{proof}

Recall from \cite{JW} that $\{ h, e, f\}$
is a standard Chevalley basis of $sl_2$ with brackets $[h,e] = 2e$, $[h,f] = -2f$,
$[e,f] = h$. Set $$h^{'}=e+f, \ e^{'}=\frac{1}{2}(h-e+f),\ f^{'}=\frac{1}{2}(h+e-f).$$ Then $\{ h^{'}, e^{'}, f^{'}\}$ is a $sl_2$-triple. Let $h^{''}=\frac{1}{4}h^{'}=\frac{1}{4}(e+f)$, and
$$\Delta(h^{''},z)=z^{h^{''}(0)}\mbox{exp}(\sum_{k=1}^{\infty}\frac{h^{''}(k)}{-k}(-z)^{-k}).$$
 Note that $L(k,i)$ for $0\leq i\leq k$ are all the irreducible modules for the rational vertex operator algebra $L(k,0)$. From \cite{L2}, we have the following result.

\begin{lem}\label{lem:twisted}
For $0\leq i\leq k$, $(\overline{L(k,i)}, Y_{\sigma}(\cdot,z))=(L(k,i),Y(\Delta(h^{''},z)\cdot,z))$ are irreducible $\sigma$-twisted $L(k,0)$-modules.
\end{lem}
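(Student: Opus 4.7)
The plan is to invoke Li's $\Delta$-operator construction from \cite{L2}: whenever an automorphism $g$ of a vertex operator algebra $V$ can be written as $g = e^{2\pi i h(0)}$ for a weight-one primary element $h\in V_1$ such that $h(0)$ acts semisimply with eigenvalues in $\frac{1}{T}\Z$ (where $T$ is the order of $g$), then for any weak $V$-module $(M, Y_M)$ the pair $(M, Y_M(\Delta(h,z)\cdot, z))$ is automatically a weak $g$-twisted $V$-module, and this assignment preserves irreducibility.

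The first and main step is to verify that the automorphism $\sigma$, which looks outer in the original basis $\{h,e,f\}$, is in fact inner on $L(k,0)$, realized precisely by $h^{''} = \frac{1}{4}h^{'} = \frac{1}{4}(e+f)$. After the change of basis to the $sl_2$-triple $\{h^{'},e^{'},f^{'}\}$, a direct computation using $\sigma(h)=-h$, $\sigma(e)=f$, $\sigma(f)=e$ yields
\begin{equation*}
\sigma(h^{'}) = h^{'}, \qquad \sigma(e^{'}) = -e^{'}, \qquad \sigma(f^{'}) = -f^{'}.
\end{equation*}
On the other hand, since $[h^{'},e^{'}] = 2e^{'}$ and $[h^{'},f^{'}] = -2f^{'}$, the operator $h^{''}(0) = \frac{1}{4}h^{'}(0)$ acts on $sl_2 \subset L(k,0)_1$ with eigenvalues $0$, $\frac{1}{2}$, $-\frac{1}{2}$ on $h^{'}$, $e^{'}$, $f^{'}$ respectively, so $e^{2\pi i h^{''}(0)}$ fixes $h^{'}$ and negates $e^{'}, f^{'}$. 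The two automorphisms $\sigma$ and $e^{2\pi i h^{''}(0)}$ of $L(k,0)$ thus agree on the strong generating set $\{a(-1)\1 : a \in sl_2\}$, and hence coincide on all of $L(k,0)$.

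The remaining hypotheses of Li's construction are straightforward: $h^{''} \in L(k,0)_1$ is Virasoro primary, so $L(0)h^{''} = h^{''}$ and $L(n)h^{''} = 0$ for $n\geq 1$; and $h^{''}(0)$ acts semisimply on each $L(k,i)$ with eigenvalues in $\frac{1}{4}\Z$, because $h^{'}(0)$ is $SL_2(\C)$-conjugate to $h(0)$ and so has integer spectrum on each $L(k,i)$. Li's theorem then produces a weak $\sigma$-twisted $L(k,0)$-module structure on the underlying space $L(k,i)$ given by $Y_{\sigma}(\cdot,z) = Y(\Delta(h^{''},z)\cdot,z)$ for each $0\leq i\leq k$. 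Irreducibility is inherited from $L(k,i)$ because the invertible operator $\Delta(h^{''},z)$ induces a lattice isomorphism between the (weak) submodules of the untwisted and the twisted structures.

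The hard part is really the very first step: recognizing that although $\sigma$ exchanges $e$ and $f$ and hence is not manifestly implementable by exponentiating a Cartan zero-mode in the original basis, the alternative triple $\{h^{'}, e^{'}, f^{'}\}$ turns it into an inner automorphism generated by $h^{''} = \frac{1}{4}h^{'}$. Once this identification is made, the conclusion is simply an application of Li's machinery.
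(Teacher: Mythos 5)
Your proposal is correct and follows essentially the same route as the paper, which simply sets up $h''=\frac14(e+f)$ and invokes Li's $\Delta$-operator construction from \cite{L2} without further argument. Your added verifications — that $\sigma(h')=h'$, $\sigma(e')=-e'$, $\sigma(f')=-f'$ so that $\sigma=e^{2\pi i h''(0)}$ on the generating set $\{a(-1)\1: a\in sl_2\}$, that $h''$ is primary of weight one with $h''(0)$ semisimple of half-integral spectrum on $L(k,0)$, and that invertibility of $\Delta(h'',z)$ transports irreducibility — are exactly the hypotheses the paper leaves implicit, and they all check out.
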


As in \cite{JW}, for $u\in L(k,0)$ such that $\sigma(u)=e^{-\pi ri}u$, $i=\sqrt{-1}$, $r\in\Z$, we use the notation $u_{n}$ and $u(n)$ respectively to distinguish the action of the elements in $L(k,0)$ on $\sigma$-twisted modules and untwisted modules as follows $$Y_{\sigma}(u,z)=\sum_{n\in \Z+\frac{r}{2}}u_{n}z^{-n-1},\ Y(u,z)=\sum_{n\in \Z}u(n)z^{-n-1}.$$
Recall that the top level $U^{i}=\bigoplus_{j=0}^{i}\mathbb{C}v^{i,j}$ of $L(k,i)$ for $0\leq i\leq k$ is an $(i+1)$-dimensional irreducible module for $\C h(0)\oplus \C e(0)\oplus \C f(0)\cong sl_2$. Let
\begin{eqnarray*}
\eta_{i}=\sum_{j=0}^{i}(-1)^{j}v^{i,j},
\end{eqnarray*} then $\eta_{i}$ is the lowest weight vector with weight $-i$ in $(i+1)$-dimensional irreducible module for $\C h^{'}(0)\oplus \C e^{'}(0)\oplus \C f^{'}(0)\cong sl_2$, that is, $f^{'}(0)\eta_{i}=0$ and $h^{'}(0)\eta_{i}=-i\eta_{i}$, and we have:
\begin{lem}\cite{JW}\label{lem:weight}
For the positive integer $k\geq 3$, and $0\leq i\leq k$,
\begin{eqnarray*}L(0)\eta_{i}=\Big(\frac{i(i-k)}{4(k+2)}+\frac{k-1}{16}\Big)\eta_{i}.
\end{eqnarray*}
\end{lem}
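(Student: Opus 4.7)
I interpret $L(0)$ in the statement as the zero mode of the $\sigma$-twisted parafermion Virasoro, i.e.\ the coefficient of $z^{-2}$ in $Y_{\sigma}(\omega,z)=Y(\Delta(h'',z)\omega,z)$ with $\omega=\omega_{\mraff}-\omega_{\gamma}$. Writing $L(0)=L^{\mraff}(0)-L^{\gamma}(0)$ reduces the problem to two independent calculations.

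For the affine piece, I first compute $h''(n)\omega_{\mraff}$ for $n\geq 0$. Since $\omega_{\mraff}$ is $sl_{2}$-invariant, $h''(0)\omega_{\mraff}=0$; a direct bracket calculation using the explicit form of $\omega_{\mraff}$ and $h''=\frac{1}{4}(e+f)$ gives $h''(1)\omega_{\mraff}=\frac{1}{4}h'(-1)\1$ and $h''(2)\omega_{\mraff}=0$; and $h''(n)\omega_{\mraff}=0$ for $n\geq 3$ by weight. Expanding the exponential in $\Delta(h'',z)$ and noting that $z^{h''(0)}$ fixes $\1$, $h'(-1)\1$, and $\omega_{\mraff}$, I obtain
\begin{equation*}
\Delta(h'',z)\omega_{\mraff}=\omega_{\mraff}+\tfrac{1}{4}h'(-1)\1\cdot z^{-1}+\tfrac{k}{16}\1\cdot z^{-2},
\end{equation*}
where the $z^{-2}$ coefficient is $\tfrac{1}{2}(h''(1))^{2}\omega_{\mraff}=\tfrac{k}{16}\1$. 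Extracting the $z^{-2}$ coefficient of $Y(\Delta(h'',z)\omega_{\mraff},z)$ and using the standard untwisted values $o(\omega_{\mraff})\eta_{i}=\frac{i(i+2)}{4(k+2)}\eta_{i}$ (which holds on the entire top level of $L(k,i)$, cf.\ (\ref{lowest1})) and $h'(0)\eta_{i}=-i\eta_{i}$, one finds $L^{\mraff}(0)\eta_{i}=\Big(\frac{i(i-k)}{4(k+2)}+\frac{k}{16}\Big)\eta_{i}$.

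For the Heisenberg piece, the plan is to identify $\eta_{i}$ as the twisted vacuum of the rank-one Heisenberg subalgebra $M(k)$. Writing $h=e'+f'$ and observing that $h''(0)=\frac{1}{4}\mathrm{ad}(h')$ has eigenvalues $\pm\frac{1}{2}$ on $e',f'$, together with $h''(n)h=0$ for $n\geq 1$ on weight grounds, one obtains $\Delta(h'',z)h=z^{1/2}e'(-1)\1+z^{-1/2}f'(-1)\1$. Hence the half-integer modes of $h$ in the twisted module are $h_{m}=e'(m+\tfrac{1}{2})+f'(m-\tfrac{1}{2})$, and for every $m>0$ both summands annihilate $\eta_{i}$: the first because $\eta_{i}$ is in the top level of $L(k,i)$, the second because $\eta_{i}$ is the $f'(0)$-lowest weight vector there. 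Thus $\eta_{i}$ is a twisted $M(k)$-vacuum; after the rescaling $\tilde{h}=h/\sqrt{2k}$ turns $\omega_{\gamma}$ into the standard rank-one free-boson Virasoro $\tfrac{1}{2}\tilde{h}(-1)^{2}\1$, the well-known $\mathbb{Z}_{2}$-orbifold vacuum anomaly gives $L^{\gamma}(0)\eta_{i}=\frac{1}{16}\eta_{i}$.

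Subtracting the two values yields $L(0)\eta_{i}=\Big(\frac{i(i-k)}{4(k+2)}+\frac{k-1}{16}\Big)\eta_{i}$, as asserted. The main technical point is the Heisenberg step: the choice of primed basis is exactly what makes $\eta_{i}$ annihilated by every positive twisted Heisenberg mode, which is what converts the naive affine weight $\frac{k}{16}$ into $\frac{k-1}{16}$.
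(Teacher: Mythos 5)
Your argument is correct. Note that the paper itself gives no proof of this lemma --- it is imported from \cite{JW} --- so the only internal evidence of the intended route is that the paper immediately deduces $L_{\mraff}(0)\eta_i=\bigl(\tfrac{i(i-k)}{4(k+2)}+\tfrac{k}{16}\bigr)\eta_i$ \emph{from} this statement by adding the Heisenberg contribution $\tfrac{1}{16}$; you run exactly the same decomposition $\omega=\omega_{\mraff}-\omega_{\gamma}$ in the opposite direction, computing $L_{\mraff}(0)\eta_i$ from scratch via the $\Delta$-operator and subtracting the twisted free-boson vacuum energy. I checked the two key inputs: $\Delta(h'',z)\omega_{\mraff}=\omega_{\mraff}+\tfrac14 h'(-1)\1\,z^{-1}+\tfrac{k}{16}\1\,z^{-2}$ is right (using $(e+f)(1)\omega_{\mraff}=h'(-1)\1$ and $h''(1)h'(-1)\1=\tfrac{k}{2}\1$), and $\eta_i$ is indeed annihilated by all $h_m=e'(m+\tfrac12)+f'(m-\tfrac12)$ with $m>0$, so the twisted Jacobi identity for $M(k)$ forces $L^{\gamma}(0)\eta_i=\tfrac{1}{16}\eta_i$. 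The only cosmetic slip is the claim that $h''(n)h(-1)\1=0$ for $n\geq 1$ ``on weight grounds'': for $n=1$ this is a weight-zero vector and vanishes because $\langle e+f,h\rangle=0$, not by weight.
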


By Lemma \ref{lem:weight}, we have
\begin{lem} \begin{eqnarray}\label{lowest2}
L_{\mraff}(0)\eta_{i}=\Big(\frac{i(i-k)}{4(k+2)}+\frac{k}{16}\Big)\eta_{i}.  \label{eq:3.1}
\end{eqnarray}\end{lem}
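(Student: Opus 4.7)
The approach is to apply the $\Delta$-operator construction recalled in Lemma \ref{lem:twisted} directly to $\omega_{\mraff}$ and then evaluate on $\eta_i$ using two facts already at hand: the untwisted top-level eigenvalue \eqref{lowest1}, and the identity $h'(0)\eta_i = -i\eta_i$ stated just before Lemma \ref{lem:weight}.

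The first task is to compute $\Delta(h'', z)\omega_{\mraff}$. Because $\omega_{\mraff}$ is the Sugawara vector, every $a(0)$ with $a\in sl_{2}$ annihilates it; in particular $h''(0)\omega_{\mraff}=0$. Short calculations with the affine brackets show that $h''(n)\omega_{\mraff}=0$ for all $n\geq 2$, while $h''(1)\omega_{\mraff} = h''(-1)\1$ and $h''(1)h''(-1)\1 = \frac{k}{8}\1$ (the constant comes from the single nontrivial Heisenberg self-pairing $[h''(1), h''(-1)] = \langle h'', h''\rangle\, C$ together with $\langle h'', h''\rangle = \frac{1}{8}$). Collecting everything in the exponential part of $\Delta(h'', z)$ and noting that $z^{h''(0)}$ fixes each surviving vector, I expect to obtain
\[
\Delta(h'', z)\,\omega_{\mraff} \;=\; \omega_{\mraff} + \frac{1}{z}\,h''(-1)\1 + \frac{k}{16\,z^{2}}\,\1.
\]

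Translating this into modes of $Y_\sigma$ via $Y_\sigma(\omega_{\mraff}, z) = Y(\Delta(h'', z)\omega_{\mraff}, z)$ and extracting the coefficient of $z^{-2}$ yields the operator identity
\[
L_{\mraff}(0)\;=\;L_{\mraff}^{\mathrm{un}}(0) + h''(0) + \frac{k}{16}
\]
on the underlying space of $\overline{L(k,i)} = L(k,i)$, where $L_{\mraff}^{\mathrm{un}}(0)$ denotes the zero mode of $\omega_{\mraff}$ in the untwisted $L(k,i)$. Applied to $\eta_i$, equation \eqref{lowest1} gives $L_{\mraff}^{\mathrm{un}}(0)\eta_i = \frac{i(i+2)}{4(k+2)}\eta_i$, and $h''(0)\eta_i = \frac{1}{4}h'(0)\eta_i = -\frac{i}{4}\eta_i$. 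The simplification $\frac{i(i+2)}{4(k+2)} - \frac{i}{4} = \frac{i(i-k)}{4(k+2)}$ then produces the claimed eigenvalue.

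The only step requiring real care is the explicit formula for $\Delta(h'', z)\omega_{\mraff}$: one must verify termination of the series (the key observation is that $h''(1)^{2}\omega_{\mraff}$ is already a multiple of $\1$, killing all higher-order contributions from the exponential) and correctly identify the scalar $\frac{k}{16}$ as $\frac{1}{2}$ times $h''(1)h''(-1)\1 = \frac{k}{8}\1$. Once this formula is in hand, the rest is an immediate substitution using \eqref{lowest1} and $h'(0)\eta_i = -i\eta_i$; no further twisted-module combinatorics are needed.
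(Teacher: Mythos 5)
Your argument is correct, but it is not the route the paper takes. The paper proves this lemma in one line by citing Lemma \ref{lem:weight} (imported from \cite{JW}): since $\omega_{\mraff}=\omega+\omega_{\gamma}$ and the rank-one Heisenberg algebra generated by $h(-1)\1$ becomes twisted under $\sigma$ (as $\sigma(h)=-h$), the zero mode of $\omega_{\gamma}$ contributes the twisted-Heisenberg ground energy $\frac{1}{16}$ on $\eta_{i}$, which added to the eigenvalue $\frac{i(i-k)}{4(k+2)}+\frac{k-1}{16}$ of Lemma \ref{lem:weight} gives $\frac{k}{16}$. You instead bypass Lemma \ref{lem:weight} entirely and compute the twisted conformal weight directly from Li's shift: your formula $\Delta(h'',z)\omega_{\mraff}=\omega_{\mraff}+z^{-1}h''(-1)\1+\frac{k}{16}z^{-2}\1$ is right (only the $k=1$ term of the exponential survives on a primary weight-one descendant, $h''(1)\omega_{\mraff}=h''(-1)\1$, and $\frac{1}{2}h''(1)h''(-1)\1=\frac{1}{2}\cdot\langle h'',h''\rangle k\,\1=\frac{k}{16}\1$ with $\langle h'',h''\rangle=\frac18$), and the resulting operator identity $L_{\mraff}(0)=L_{\mraff}^{\mathrm{un}}(0)+h''(0)+\frac{k}{16}$ combined with (\ref{lowest1}) and $h'(0)\eta_{i}=-i\eta_{i}$ yields exactly $\frac{i(i+2)}{4(k+2)}-\frac{i}{4}+\frac{k}{16}=\frac{i(i-k)}{4(k+2)}+\frac{k}{16}$. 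What your version buys is self-containedness: it needs only the untwisted top-level weight and the $h'(0)$-eigenvalue, not the parafermion weight formula from \cite{JW}, and it makes the mechanism behind the $\frac{k}{16}$ shift explicit; what the paper's version buys is brevity, since the hard part has already been recorded as Lemma \ref{lem:weight}.
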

 We can now construct the $k+1$ inequivalent irreducible $\sigma$-twisted modules of $L(k,0)$.
\begin{thm}\label{thm:construct}
$\overline{L(k,i)}$ for $0\leq i\leq k$ are $k+1$ inequivalent irreducible $\sigma$-twisted modules of $L(k,0)$ generated by $\eta_{i}$.
\end{thm}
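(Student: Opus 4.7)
The plan is to leverage what has already been established: Lemma \ref{lem:twisted} shows that each $\overline{L(k,i)}$ is an irreducible $\sigma$-twisted $L(k,0)$-module, while Proposition \ref{prop:twisted1} asserts that there are exactly $k+1$ inequivalent irreducible $\sigma$-twisted $L(k,0)$-modules. It therefore suffices to prove (i) the $\overline{L(k,i)}$ for $0\le i\le k$ are pairwise inequivalent, and (ii) each is cyclically generated by the vector $\eta_i$.

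Statement (ii) is essentially free from irreducibility: since $\overline{L(k,i)}$ is irreducible as a $\sigma$-twisted $L(k,0)$-module and $\eta_i=\sum_{j=0}^{i}(-1)^j v^{i,j}$ is a nonzero vector of the top level $U^i$ (the $v^{i,j}$ are linearly independent), the cyclic $\sigma$-twisted submodule generated by $\eta_i$ is a nonzero submodule and hence equals $\overline{L(k,i)}$.

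For statement (i), the cleanest approach is to identify $\sigma$ as an inner automorphism. A direct check on the $sl_2$-triple $\{h',e',f'\}$ gives $\sigma(h')=h'$, $\sigma(e')=-e'$, $\sigma(f')=-f'$, while the adjoint action of $h''(0)=\frac{1}{4}h'(0)$ has eigenvalues $0,\frac{1}{2},-\frac{1}{2}$ on $h',e',f'$ respectively. Hence $e^{2\pi\sqrt{-1}\,h''(0)}$ fixes $h'$ and negates $e',f'$, so $\sigma=e^{2\pi\sqrt{-1}\,h''(0)}$ on $sl_2$; since both sides are automorphisms of the vertex operator algebra $L(k,0)$ agreeing on the strong generators, the identity extends to all of $L(k,0)$. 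With $\sigma$ of this inner form, Li's $\Delta$-operator construction \cite{L2} yields a categorical equivalence between $L(k,0)$-modules and $\sigma$-twisted $L(k,0)$-modules, with quasi-inverse $\Delta(-h'',z)$; thus inequivalent untwisted modules $L(k,i)$ produce inequivalent $\sigma$-twisted modules $\overline{L(k,i)}$. Combined with Proposition \ref{prop:twisted1}, the $k+1$ modules $\overline{L(k,i)}$ exhaust all equivalence classes of irreducible $\sigma$-twisted $L(k,0)$-modules.

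The main obstacle is the inequivalence step. The naive invariant $L_\sigma(0)\eta_i=\frac{i(i-k)}{4(k+2)}+\frac{k}{16}$ from \eqref{lowest2} is symmetric under $i\mapsto k-i$ and cannot by itself separate $\overline{L(k,i)}$ from $\overline{L(k,k-i)}$, so some finer information is indispensable; the inner-automorphism identification followed by the invertibility of the $\Delta$-operator is the most economical route. A purely module-theoretic alternative would be to compare the full graded characters $\mathrm{tr}_{\overline{L(k,i)}}\,q^{L_\sigma(0)}=q^{k/16}\,\mathrm{tr}_{L(k,i)}\,q^{L_{\mraff}(0)+h''(0)}$, which differ for different $i$ since the $L(k,i)$ have distinct characters and the zero-mode $h''(0)$ preserves the $L_{\mraff}(0)$-grading — but this is strictly more computational than the equivalence-of-categories argument above.
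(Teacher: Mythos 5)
Your proposal is correct, and the generation and irreducibility parts coincide with what the paper does (both rest on Lemma \ref{lem:twisted} and the count in Proposition \ref{prop:twisted1}). Where you genuinely diverge is the inequivalence step. The paper's proof is a one-line concrete computation: the lowest weight space of $\overline{L(k,i)}$ is $\C\eta_i$, and the zero mode of $h'$ in the twisted module satisfies $h'_0\eta_i=(h'(0)+\tfrac{k}{2})\eta_i=(-i+\tfrac{k}{2})\eta_i$ (the shift $\tfrac{k}{2}=\langle h'',h'\rangle k$ coming from the $\Delta$-operator), and these scalars are pairwise distinct for $0\le i\le k$; this numerical invariant separates the modules exactly where the conformal weight fails to, as you correctly observed. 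You instead identify $\sigma=e^{2\pi\sqrt{-1}\,h''(0)}$ as an inner automorphism (the verification $\sigma(h')=h'$, $\sigma(e')=-e'$, $\sigma(f')=-f'$ is right) and invoke the invertibility of Li's $\Delta$-functor, with quasi-inverse $\Delta(-h'',z)$, to conclude that inequivalent $L(k,i)$ yield inequivalent $\overline{L(k,i)}$. Both arguments are valid. Yours is softer and more structural --- it needs no computation at all once the inner form of $\sigma$ is recognized, and it explains \emph{why} the correspondence $L(k,i)\mapsto\overline{L(k,i)}$ is a bijection on isomorphism classes --- while the paper's eigenvalue computation is more elementary, produces an explicit distinguishing invariant that is reused later (e.g.\ in the proof of Theorem \ref{thm:contragredient}(2), where the same eigenvalue $-i+\tfrac{k}{2}$ identifies the contragredient), and avoids appealing to the full categorical statement in \cite{L2}. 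Your proposed character-comparison alternative is the only shaky point: as stated it does not obviously separate $i$ from $k-i$ (the leading terms agree), but since you present it only as a non-preferred fallback this does not affect the correctness of the proof.
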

\begin{proof}
We just need to notice that $\eta_{i}$ is the lowest weight vector of the $\sigma$-twisted module $\overline{L(k,i)}$, and $h_{0}^{'}\eta_{i}=(h^{'}(0)+\frac{k}{2})\eta_{i}=(-i+\frac{k}{2})\eta_{i}$, this implies that $\overline{L(k,i)}$ for $0\leq i\leq k$ are $k+1$ inequivalent irreducible $\sigma$-twisted modules of $L(k,0)$ generated by $\eta_{i}$.
\end{proof}

We now classify all the irreducible modules of the orbifold vertex operator algebra $L(k,0)^{\sigma}$. Set

\begin{eqnarray}
u^{k,i,1}=\eta_{i}\in L(k,i)(0),\ u^{k,i,2}=(e-f)_{-\frac{1}{2}}\eta_{i}\in L(k,i)(\frac{1}{2}).
 \end{eqnarray}

By applying the results in \cite{DLM3}, we have:

\begin{prop}\label{prop:orbifold1}
For $0\leq i\leq k$, let $\overline{L(k,i)}^{+}$ and $\overline{L(k,i)}^{-}$ be the $L(k,0)^{\sigma}$-modules generated by $u^{k,i,1}$ and $u^{k,i,2}$ respectively. Then $\overline{L(k,i)}^{+}$ and $\overline{L(k,i)}^{-}$ for $0\leq i\leq k$ are irreducible modules of $L(k,0)^{\sigma}$ with the lowest weights
$$L_{\mraff}(0)u^{k,i,1}=\Big(\frac{i(i-k)}{4(k+2)}+\frac{k}{16}\Big)u^{k,i,1}, \ L_{\mraff}(0)u^{k,i,2}=\Big(\frac{i(i-k)}{4(k+2)}+\frac{k+8}{16}\Big)u^{k,i,2}.$$
\end{prop}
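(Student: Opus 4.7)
The plan is to apply the orbifold theory of \cite{DLM3,DLM4} to the $\sigma$-twisted modules $\overline{L(k,i)}$ constructed in Theorem \ref{thm:construct}. Since $L(k,0)$ is $\sigma$-rational, every irreducible $\sigma$-twisted $L(k,0)$-module carries a natural action of $\sigma$, and the general orbifold theorem then produces an eigenspace decomposition $\overline{L(k,i)}=\overline{L(k,i)}^{+}\oplus\overline{L(k,i)}^{-}$ into two inequivalent irreducible $L(k,0)^{\sigma}$-modules. Granting this, the remaining task is to pin down the lowest weight vectors and compute their conformal weights.

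First I would identify the $\sigma$-eigenspaces with the parities of the twisted grading. By the twisted Jacobi identity \eqref{eq:3.1.}--\eqref{eq:3.2.}, integer modes of $\sigma$-fixed elements preserve each graded piece $\overline{L(k,i)}(n/2)$, while half-integer modes of elements in $L(k,0)^{-}$ shift the grading by $\tfrac{1}{2}$ and hence flip its parity. This forces $\sigma$ to act as $+1$ on $\bigoplus_{n\in\Z}\overline{L(k,i)}(n)$ and as $-1$ on $\bigoplus_{n\in\Z}\overline{L(k,i)}(n+\tfrac{1}{2})$. Consequently, $u^{k,i,1}=\eta_{i}\in\overline{L(k,i)}(0)$ lies in $\overline{L(k,i)}^{+}$ and $u^{k,i,2}=(e-f)_{-\frac{1}{2}}\eta_{i}\in\overline{L(k,i)}(\tfrac{1}{2})$ lies in $\overline{L(k,i)}^{-}$.

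Next I would verify that $u^{k,i,2}$ is actually nonzero. This follows because $\overline{L(k,i)}^{-}$ must be nontrivial (otherwise $\overline{L(k,i)}$ would be integrally graded and hence an ordinary untwisted $L(k,0)$-module, contradicting Theorem \ref{thm:construct}), and the only mode of a weight-one element of $L(k,0)^{-}$ that can produce a grade-$\tfrac{1}{2}$ vector from the lowest weight vector $\eta_{i}$ is $(e-f)_{-\frac{1}{2}}$. Since $\overline{L(k,i)}^{\pm}$ are irreducible $L(k,0)^{\sigma}$-modules, each is generated by any nonzero vector in it; in particular, the $L(k,0)^{\sigma}$-submodules generated by $u^{k,i,1}$ and $u^{k,i,2}$ coincide with $\overline{L(k,i)}^{+}$ and $\overline{L(k,i)}^{-}$, respectively.

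The conformal weights are then obtained by direct calculation. For $u^{k,i,1}=\eta_{i}$ this is exactly \eqref{eq:3.1}. For $u^{k,i,2}$, the bracket relation $[L_{\mraff}(0),(e-f)_{-\frac{1}{2}}]=\tfrac{1}{2}(e-f)_{-\frac{1}{2}}$ adds $\tfrac{1}{2}=\tfrac{8}{16}$ to the weight of $\eta_{i}$, giving the stated formula. The principal technical point is the nonvanishing verification for $u^{k,i,2}$; once that is in hand, everything else is a direct application of the $\sigma$-rational orbifold framework.
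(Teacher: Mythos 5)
Your proposal is correct and takes essentially the same route as the paper, which at this point simply invokes the results of \cite{DLM3}: the irreducible $\sigma$-twisted module $\overline{L(k,i)}$ splits into the integrally and half-integrally graded parts, these are the two inequivalent irreducible $L(k,0)^{\sigma}$-modules, and the weights follow from Lemma \ref{lem:weight} together with the shift by $\tfrac{1}{2}$ coming from $[L_{\mraff}(0),(e-f)_{-\frac{1}{2}}]=\tfrac{1}{2}(e-f)_{-\frac{1}{2}}$. The only step worth tightening is the nonvanishing of $u^{k,i,2}$, which is cleaner by direct computation than by appeal to nontriviality of $\overline{L(k,i)}^{-}$: since $(e-f)_{\frac{1}{2}}\eta_{i}$ lies in the zero space $\overline{L(k,i)}(-\tfrac{1}{2})$, the commutator formula \eqref{eq:3.1.} gives $(e-f)_{\frac{1}{2}}(e-f)_{-\frac{1}{2}}\eta_{i}=[(e-f)_{\frac{1}{2}},(e-f)_{-\frac{1}{2}}]\eta_{i}=-k\,\eta_{i}\neq 0$, hence $(e-f)_{-\frac{1}{2}}\eta_{i}\neq 0$.
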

Combining Proposition \ref{prop:twisted1} and the results in \cite{DM1}, we have:
\begin{prop}\label{prop:orbifold2} For $0\leq i\leq k$, we have
$$L(k,i)=L(k,i)^{+}\bigoplus L(k,i)^{-},$$
 where $L(k,i)^{+}$ for $i\neq 0$ is an irreducible module of $L(k,0)^{\sigma}$ generated by $\eta_{i}$ with weight $\frac{i(i+2)}{4(k+2)}$, and $L(k,i)^{-}$ for $i\neq 0$ is an irreducible module of $L(k,0)^{\sigma}$ generated by $e^{'}(0)\eta_{i}$ with the same weight $\frac{i(i+2)}{4(k+2)}$. And $L(k,0)^{+}$ is an irreducible module of $L(k,0)^{\sigma}$ generated by $\bf{1}$ with weight $0$, and $L(k,0)^{-}$ is an irreducible module of $L(k,0)^{\sigma}$ generated by $e(-1)\bf{1}$ with weight $1$.
\end{prop}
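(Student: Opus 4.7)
The strategy is to invoke the Dong--Mason decomposition theorem \cite{DM1} for irreducible modules over a rational vertex operator algebra under a finite-order automorphism. Since $L(k,0)$ is rational and the proof of Proposition \ref{prop:twisted1} already establishes that each $L(k,i)$ for $0 \leq i \leq k$ is $\sigma$-stable (their top-level conformal weights $\frac{i(i+2)}{4(k+2)}$ are pairwise distinct), the action of $\sigma$ on $L(k,i)$ lifts to an automorphism $\tilde\sigma$ of order two, unique up to overall sign, and $L(k,i)$ splits as $L(k,i) = L(k,i)^+ \oplus L(k,i)^-$ into inequivalent irreducible $L(k,0)^{\sigma}$-modules indexed by the two characters of $\langle \sigma \rangle \cong \Z_{2}$.

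To extract explicit generators, I would fix the sign of the lift so that $\tilde\sigma(\eta_i) = \eta_i$, placing $\eta_i$ in $L(k,i)^+$. From $h' = e + f$, $e' = \tfrac12(h - e + f)$, $f' = \tfrac12(h + e - f)$ together with $\sigma(h) = -h$, $\sigma(e) = f$, $\sigma(f) = e$, a direct computation gives $\sigma(h') = h'$, $\sigma(e') = -e'$ and $\sigma(f') = -f'$; hence $e'(0)\eta_i$ is a $-1$-eigenvector of $\tilde\sigma$ and lies in $L(k,i)^-$. Both $\eta_i$ and $e'(0)\eta_i$ belong to the top level $U^i$, on which $o(\omega_{\mraff})$ acts by $\frac{i(i+2)}{4(k+2)}$ in view of (\ref{lowest1}), so this is the conformal weight on each side. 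For $i \neq 0$, $e'(0)\eta_i$ is nonzero because $\eta_i$ is the $h'$-lowest weight vector of the $(i+1)$-dimensional irreducible $sl_2$-module $U^i$; by irreducibility, $\eta_i$ and $e'(0)\eta_i$ then generate $L(k,i)^+$ and $L(k,i)^-$ respectively as $L(k,0)^{\sigma}$-modules.

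For $i = 0$ the top level of $L(k,0)$ is spanned by $\mathbf{1}$, which is $\tilde\sigma$-fixed, so $L(k,0)^+ = L(k,0)^{\sigma}$ is generated by $\mathbf{1}$ of weight $0$. The complementary summand $L(k,0)^-$ is the $-1$-eigenspace of $\tilde\sigma$ on $L(k,0)$, and its lowest conformal weight is $1$; since the projection of $e(-1)\mathbf{1}$ onto $L(k,0)^-$ equals $\tfrac12(e(-1) - f(-1))\mathbf{1} \neq 0$, irreducibility of $L(k,0)^-$ forces $e(-1)\mathbf{1}$ to generate it as an $L(k,0)^{\sigma}$-module. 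The main conceptual input is the \cite{DM1} decomposition theorem; everything else is elementary bookkeeping with $sl_2$-weights, the only mild subtlety being the sign convention for the lift $\tilde\sigma$ that aligns the $\pm$ labels with the stated generators.
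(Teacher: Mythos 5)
Your proposal is correct and follows essentially the same route as the paper, which simply combines the $\sigma$-stability of the $L(k,i)$ established in Proposition \ref{prop:twisted1} with the Dong--Mason decomposition from \cite{DM1}; you supply more detail than the paper does (the eigenvalue computation $\sigma(h')=h'$, $\sigma(e')=-e'$, the sign normalization of the lift, and the treatment of $e(-1)\mathbf{1}$ via its projection onto $L(k,0)^{-}$), all of which checks out.
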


\begin{rmk}\label{rmk:irreducible} When we consider the basis $\{e,f,h\}$ of $sl_2$ with the automorphism $\tau(e)=-e, \ \tau(f)=-f, \ \tau(h)=h$,  $L(k,i)^{+}$ for $i\neq 0$ can also be viewed as an irreducible module of $L(k,0)^{\tau}$ generated by the lowest weight vector $v^{i,i}$ with weight $\frac{i(i+2)}{4(k+2)}$, and $L(k,i)^{-}$ for $i\neq 0$ can be viewed as an irreducible module of $L(k,0)^{\sigma}$ generated by $e(0)v^{i,i}$ with the same weight $\frac{i(i+2)}{4(k+2)}$.
\end{rmk}

 From the above discussion, we obtain the classification of the irreducible modules for the orbifold vertex operator algebra $L(k,0)^{\sigma}$.

 \begin{thm}\label{thm:orbifold3}
There are $4(k+1)$ inequivalent irreducible modules of $L(k,0)^{\sigma}$ and the lowest weights of these irreducible modules are listed in Proposition \ref{prop:orbifold1} and Proposition \ref{prop:orbifold2}.
\end{thm}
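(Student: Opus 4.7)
The plan is to assemble the theorem from the general $\mathbb{Z}_{2}$-orbifold machinery for rational vertex operator algebras together with the two structural propositions already stated in the excerpt. Concretely, since $L(k,0)$ is rational and $\sigma$-rational, I would invoke the standard result (from \cite{DLM4}, in the refined form developed by Miyamoto and by Dong--Ren--Xu) that every irreducible $L(k,0)^{\sigma}$-module occurs as an irreducible direct summand of an irreducible $\sigma^{j}$-twisted $L(k,0)$-module for some $j\in\{0,1\}$, and that two summands of the same $\sigma^{j}$-twisted module coming from different $\sigma$-eigenspaces are inequivalent. This reduces the classification to (a) listing the irreducible untwisted and $\sigma$-twisted $L(k,0)$-modules, (b) showing each of them is $\sigma$-stable and splits into exactly two irreducible $L(k,0)^{\sigma}$-submodules, and (c) verifying that no two of the pieces from different $\sigma^{j}$-twisted modules coincide.

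For (a) and (b), first for $j=0$ I would use that the $k+1$ irreducible $L(k,0)$-modules are $L(k,i)$, $0\le i\le k$, and that Proposition \ref{prop:orbifold2} already gives the decomposition $L(k,i)=L(k,i)^{+}\oplus L(k,i)^{-}$ into two inequivalent irreducible $L(k,0)^{\sigma}$-modules, confirming both $\sigma$-stability and the splitting into two pieces; this yields $2(k+1)$ untwisted type irreducible $L(k,0)^{\sigma}$-modules. For $j=1$, Proposition \ref{prop:twisted1} and Theorem \ref{thm:construct} supply the $k+1$ irreducible $\sigma$-twisted $L(k,0)$-modules $\overline{L(k,i)}$, and Proposition \ref{prop:orbifold1} splits each of them into two irreducible $L(k,0)^{\sigma}$-modules $\overline{L(k,i)}^{+}$ and $\overline{L(k,i)}^{-}$ generated by $u^{k,i,1}$ and $u^{k,i,2}$; this contributes another $2(k+1)$ twisted type irreducible $L(k,0)^{\sigma}$-modules. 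Combined, one obtains $4(k+1)$ irreducible $L(k,0)^{\sigma}$-modules.

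For (c), I would use the lowest weights to separate the isomorphism classes. Across the untwisted family, $L(k,i)^{\pm}$ all have conformal weight $\frac{i(i+2)}{4(k+2)}$, which is strictly increasing in $i$ on $\{0,\dots,k\}$, so different values of $i$ give inequivalent modules; the pair $L(k,i)^{+}$ and $L(k,i)^{-}$ share the lowest weight but are distinguished either by the sign of the lifted $\sigma$-action on the generating vector (following the general splitting of Dong--Mason) or, when $i=0$, simply by the different lowest weights $0$ and $1$. An identical argument separates the twisted family $\overline{L(k,i)}^{\pm}$ from one another using Proposition \ref{prop:orbifold1}. Finally, the twisted and untwisted families cannot intersect because the twisted modules have $L(0)$-grading in $\tfrac{1}{2}\mathbb{Z}$ with half-integer contributions coming from the $\Delta$-twist (visible as the extra $\frac{k}{16}$ or $\frac{k+8}{16}$ shifts), whereas the untwisted modules are integrally graded above their lowest weight, and comparing the weights explicitly rules out coincidence.

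The main obstacle is step (c), namely the clean separation of $L(k,i)^{+}$ from $L(k,i)^{-}$, and similarly of $\overline{L(k,i)}^{+}$ from $\overline{L(k,i)}^{-}$, since within each pair the $L(0)$-eigenvalues of the generating vectors may agree; the resolution is to use the $\sigma$-eigenvalue of the lift of $\sigma$ on the generator, which is exactly the mechanism behind the general Dong--Li--Mason / Dong--Ren--Xu decomposition, together with the explicit descriptions of the generators in Propositions \ref{prop:orbifold1} and \ref{prop:orbifold2}.
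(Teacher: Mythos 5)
Your overall route is the same as the paper's: the paper obtains Theorem \ref{thm:orbifold3} simply by assembling Proposition \ref{prop:twisted1}, Theorem \ref{thm:construct}, Proposition \ref{prop:orbifold1} and Proposition \ref{prop:orbifold2} together with the general orbifold machinery of \cite{DLM3}, \cite{DM1}, \cite{DLM4} (``From the above discussion, we obtain the classification''), exactly as in your steps (a) and (b). So the count $2(k+1)+2(k+1)=4(k+1)$ is reached in the same way.

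There is, however, one incorrect sub-claim in your step (c). You assert that ``an identical argument separates the twisted family $\overline{L(k,i)}^{\pm}$ from one another'' via the lowest weights of Proposition \ref{prop:orbifold1}. This fails: the twisted lowest weight $\tilde a_i=\frac{i(i-k)}{4(k+2)}+\frac{k}{16}$ satisfies $\tilde a_i=\tilde a_{k-i}$, so $\overline{L(k,i)}^{+}$ and $\overline{L(k,k-i)}^{+}$ (and likewise the minus parts) have identical lowest weights, unlike the untwisted family where $\frac{i(i+2)}{4(k+2)}$ is strictly increasing in $i$. The separation must instead come either from the general fact that inequivalent irreducible $\sigma$-twisted modules yield inequivalent irreducible $V^{\sigma}$-summands, or from the explicit invariant used in Theorem \ref{thm:construct}, namely the eigenvalue $h'_0\eta_i=(-i+\tfrac{k}{2})\eta_i$ on the top level (the same invariant the paper later uses to show $(\overline{L(k,i)}^{+})'=\overline{L(k,k-i)}^{+}$). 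Since your opening paragraph already invokes that general machinery, the gap is repairable by deleting the erroneous ``identical argument'' and appealing to it (or to $h'_0$) instead; as written, though, that step does not establish the inequivalence it claims. A similar caution applies to your remark that twisted-type and untwisted-type pieces are distinguished by half-integer grading: as $L(k,0)^{\sigma}$-modules all of them are integrally graded above their lowest weight, so the distinction again rests on the general theory or on comparing the actual lowest weights.
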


\begin{rmk}\label{rmk:untwistandtwist} We call irreducible modules $L(k,i)^{\pm}$ for $0\leq i\leq k$ untwisted type modules, and  $\overline{L(k,i)}^{\pm}$ for $0\leq i\leq k$ twisted type modules.
\end{rmk}

We now determine the fusion rules for  irreducible modules of $L(k,0)^{\sigma}$. We first prove the following lemma.

\begin{lem}\label{lem:intertwining.} For $0\leq i,j,l\leq k,$ $i+j+l\in 2{\mathbb{Z}}$, $i+j+l\leq 2k$, let $\mathcal{Y}(\cdot,z)$ be an intertwining operator of $L(k,0)$ of type $\left(\begin{array}{c}
L(k,l)\\
L(k,i) \ L(k,j)
\end{array}\right)$.  Define $\widetilde{\mathcal{Y}}(v,z)=\mathcal{Y}(\Delta(h^{''},z)v,z)$ for $v\in L(k,i)$. Then $\widetilde{\mathcal{Y}}(\cdot,z)$ is an intertwining operator of $L(k,0)^{\sigma}$ of type
$\left(\begin{array}{c}
\overline{L(k,l)}\\
L(k,i) \ \overline{L(k,j)}
\end{array}\right)$.
\end{lem}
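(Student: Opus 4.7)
The lemma is a direct application of Li's $\Delta$-operator machinery from \cite{L3}, which converts an untwisted intertwining operator into a twisted one. The plan is: (i) verify that $h'' = \frac{1}{4}(e+f)$ satisfies Li's hypotheses with $e^{2\pi i h''(0)} = \sigma$; (ii) invoke Li's general theorem on the $\Delta$-transform of intertwining operators; (iii) restrict to $L(k,0)^{\sigma}$.

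For (i), the element $h''(-1)\1$ is primary of conformal weight $1$, and the affine commutation relations on $L(k,0)$ give $h''(n)h'' = 0$ for $n \ge 2$ with $h''(1)h'' \in \C\1$. Since $\{h',e',f'\}$ is an $sl_2$-triple and $h'' = \frac{1}{4}h'$, the operator $\operatorname{ad}(h''(0))$ is semisimple on $sl_2$ with eigenvalue $0$ on $h'$ and $\pm \frac{1}{2}$ on $e'$, $f'$. Hence $e^{2\pi i h''(0)}$ fixes $h'$ and negates $e'$, $f'$; rewriting in the Chevalley basis gives $h \mapsto -h$, $e \mapsto f$, $f \mapsto e$, which is exactly $\sigma$. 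In particular the $\sigma$-twisted structure $(L(k,j), Y(\Delta(h'',z)\cdot,z))$ coincides with $\overline{L(k,j)}$ from Lemma \ref{lem:twisted}, and likewise for $L(k,l)$.

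For (ii), Li's theorem then asserts that $\widetilde{\mathcal{Y}}(v,z) = \mathcal{Y}(\Delta(h'',z)v,z)$ is a $\sigma$-twisted intertwining operator from $L(k,i) \otimes \overline{L(k,j)}$ into $\overline{L(k,l)}$. The three defining properties are checked as follows: truncation follows from the local nilpotence of $h''(n)$, $n \ge 1$, on $L(k,i)$, which makes $\Delta(h'',z)v$ a Laurent polynomial in $z$; the $L(-1)$-derivative property follows from $[L(-1), \Delta(h'',z)] = \frac{d}{dz}\Delta(h'',z)$; and the twisted Jacobi identity is reduced to the untwisted Jacobi identity for $\mathcal{Y}$ via Li's conjugation formula relating $Y(\Delta(h'',z)\cdot,z)$ to $Y(\cdot,z)$. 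Finally, restricting to $u \in L(k,0)^{\sigma}$ kills the half-integer modes and the twisted Jacobi identity collapses to the ordinary Jacobi identity for an $L(k,0)^{\sigma}$-intertwining operator, yielding the lemma.

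The main obstacle, were one to reprove Li's theorem from scratch, is the twisted Jacobi identity: the bookkeeping with three formal variables together with the $\delta$-function identities arising from conjugation by $\Delta(h'',z)$ requires careful manipulation of fractional-power expansions. Since the paper already cites \cite{L3}, however, the efficient route is to specialize Li's construction to our $h''$ and restrict to $L(k,0)^{\sigma}$, with the bulk of the work concentrated in the identification $e^{2\pi i h''(0)} = \sigma$ in step (i).
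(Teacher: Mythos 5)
Your proposal is correct and takes essentially the same route as the paper: the paper's proof is exactly a specialization of Li's Proposition 5.4, using the conjugation identity $Y_{L(k,i)}(\Delta(h'',z_{2}+z_{0})a,z_{0})\Delta(h'',z_{2})=\Delta(h'',z_{2})Y_{L(k,i)}(a,z_{0})$ for $a\in L(k,0)^{\sigma}$ to reduce the Jacobi identity for $\widetilde{\mathcal{Y}}$ to that for $\mathcal{Y}$, with the identification $e^{2\pi i h''(0)}=\sigma$ already built into Lemma \ref{lem:twisted}. One small slip: the commutator should read $[L(-1),\Delta(h'',z)]=-\frac{d}{dz}\Delta(h'',z)$ (equivalently $\Delta(h'',z)L(-1)=L(-1)\Delta(h'',z)+\frac{d}{dz}\Delta(h'',z)$), which is the sign needed for the $L(-1)$-derivative property.
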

\begin{proof} The proof is similar to the proof of Proposition 5.4 of \cite{L3}. For simplicity of the notation, we set $\Delta(z)=\Delta(h^{''},z)$, then we have $\Delta(z)\bf{1}=\bf{1}$, $$[L_{\mraff}(-1),\Delta(z)]=-\frac{d}{dz}\Delta(z),$$ and
$$Y_{L(k,i)}(\Delta(z_{2}+z_{0})a,z_{0})\Delta(z_{2})=\Delta(z_{2})Y_{L(k,i)}(a,z_{0})$$ for $a\in L(k,0)^{\sigma}$.
Thus for $a\in L(k,0)^{\sigma}$, $v\in L(k,i)$, we have

\[
z_{0}^{-1}\text{\ensuremath{\delta}}\left(\frac{z_{1}-z_{2}}{z_{0}}\right)Y_{\overline{L(k,l)}}\left(a,z_{1}\right)
\widetilde{\mathcal{Y}}\left(v,z_{2}\right)-z_{0}^{-1}\delta\left(\frac{z_{2}-z_{1}}{-z_{0}}\right)\widetilde{\mathcal{Y}}\left(v,z_{2}\right)Y_{\overline{L(k,j)}}\left(a,z_{1}\right)
\]

\[
=z_{0}^{-1}\text{\ensuremath{\delta}}\left(\frac{z_{1}-z_{2}}{z_{0}}\right)Y_{L(k,l)}\left(\Delta(z_{1})a,z_{1}\right)
\mathcal{Y}\left(\Delta(z_{2})v,z_{2}\right)
\]

\[
-z_{0}^{-1}\delta\left(\frac{z_{2}-z_{1}}{-z_{0}}\right)\mathcal{Y}\left(\Delta(z_{2})v,z_{2}\right)Y_{L(k,j)}\left(\Delta(z_{1})a,z_{1}\right)
\]

\[
=z_{2}^{-1}\text{\ensuremath{\delta}}\left(\frac{z_{1}-z_{0}}{z_{2}}\right)\mathcal{Y}\left(Y_{L(k,i)}(\Delta(z_{1})a,z_{0})\Delta(z_{2})v,z_{2}\right)
\]

\[
=z_{2}^{-1}\text{\ensuremath{\delta}}\left(\frac{z_{1}-z_{0}}{z_{2}}\right)\mathcal{Y}\left(\Delta(z_{2})Y_{L(k,i)}(a,z_{0})v,z_{2}\right)
\]

\[
=z_{2}^{-1}\text{\ensuremath{\delta}}\left(\frac{z_{1}-z_{0}}{z_{2}}\right)\widetilde{\mathcal{Y}}\left(Y_{L(k,i)}(a,z_{0})v,z_{2}\right)
\]
So $\widetilde{\mathcal{Y}}(\cdot,z)$ is an intertwining operator of $L(k,0)^{\sigma}$ of type
$\left(\begin{array}{c}
\overline{L(k,l)}\\
L(k,i) \ \overline{L(k,j)}
\end{array}\right)$.
\end{proof}

 We now determine the contragredient modules of irreducible $L(k,0)^{\sigma}$-modules. First we recall from \cite{DLY2} that the irreducible $K_0$-modules $M^{i,j}$ for $0\leq i\leq k, 0\leq j\leq i-1$ can be realized in the lattice vertex operator algebra $V_{L^{\bot}}$, where $L=\mathbb{Z}\alpha_{1}+\cdots+\mathbb{Z}\alpha_{k}$ with $\langle \alpha_{i}, \alpha_{j} \rangle=2\delta_{ij}$, and $L^{\bot}$ is the dual lattice of $L$. More concretely, the top level of $M^{i,j}$ is a one dimensional space spanned by $v^{i,j}$ and $v^{i,j}$ has the explicit form in $V_{L^{\bot}}$:
\begin{eqnarray}\label{the lowest weight}
v^{0,0}=\mathbf{1}, \ v^{i,0}=\sum\limits_{\tiny{\begin{split}I\subseteq\{1,2,\cdots,k\} \\  |I|=i \ \ \ \ \ \end{split}}}e^{\alpha_{I}/2}, \ v^{i,j}=\sum\limits_{\tiny{\begin{split}I\subseteq\{1,2,\cdots,k\}, \\ |I|=i\ \ \ \ \ \end{split}}}\sum\limits_{\tiny{\begin{split}J\subseteq I \\ |J|=j \ \end{split}}}e^{\alpha_{I-J}/2-\alpha_{J}/2},\label{eq:3.3}
\end{eqnarray}
where $\alpha_{J}=\sum_{i\in J}\alpha_{i}$ for a subset $J$ of $\{1,2\cdots,k\}$, and the vertex operator associated with $e_{\alpha}, \alpha\in L^{\bot}$ is defined on $V_{L^{\bot}}$ by
\begin{eqnarray}
\mathscr{Y}(e_{\alpha},z)=\mbox{exp}\Big(\sum^{\infty}_{n=1}\frac{\alpha(-n)}{n}z^{n}\Big)\mbox{exp}
\Big(-\sum^{\infty}_{n=1}\frac{\alpha(n)}{n}z^{-n}\Big)e_{\alpha}z^{\alpha(0)}.\label{eq:3.4.}
\end{eqnarray}
From \cite[Chapter 8]{FLM} and \cite{DL} (see also \cite{Ab1}), the operator $\mathscr{Y}(\cdot, z)$ produces the intertwining operator for $V_{L}$ of type $\left(\begin{array}{c}
V_{\lambda_{1}+\lambda_{2}+L}\\
V_{\lambda_{1}+L} \ V_{\lambda_{2}+L}
\end{array}\right)$ for $\lambda_{1}, \lambda_{2}\in L^{\bot}$.

 \begin{thm}\label{thm:contragredient}
 For $0\leq i\leq k$. (1) If $i\in 2\mathbb{Z}$, $L(k,i)^{\pm}$ are self-dual. If $i\in 2\mathbb{Z}+1$, then $(L(k,i)^{\pm})^{'}\cong L(k,i)^{\mp}$.
 (2) $(\overline{L(k,i)}^{\pm})'=\overline{L(k,k-i)}^{\pm}$.
\end{thm}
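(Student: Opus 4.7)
The plan is to establish both parts by transporting the contragredient isomorphisms at the $L(k,0)$-level down to $L(k,0)^{\sigma}$-modules, keeping careful track of the $\sigma$-eigenspace decompositions and conformal weights.

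For part (1), I would first use that $L(k,i)$ is self-dual as an $L(k,0)$-module (its top-level $sl_2$-representation $U^i$ is self-dual), and fix an isomorphism $\phi:L(k,i)'\to L(k,i)$. Since $L(k,i)$ is $\sigma$-stable and irreducible, $\sigma$ lifts to a linear map $\sigma_{L(k,i)}$ on $L(k,i)$ satisfying the twisted intertwining property, normalized so that $\sigma_{L(k,i)}\eta_i=\eta_i$. Schur's lemma then yields a scalar $c\in\{\pm 1\}$ with $\phi\circ\sigma_{L(k,i)'}=c\,\sigma_{L(k,i)}\circ\phi$. To pin down $c$, I would compute at the top level: in the $\{h',e',f'\}$ basis one has $\sigma(h')=h'$, $\sigma(e')=-e'$, $\sigma(f')=-f'$, so the $h'$-highest weight vector $\xi_i\propto(e')^i\eta_i$ satisfies $\sigma_{L(k,i)}\xi_i=(-1)^i\xi_i$, while the dual element $\eta_i^*\in L(k,i)'$ maps under $\phi$ to a scalar multiple of $\xi_i$ (by matching $h'(0)$-eigenvalues) and satisfies $\sigma_{L(k,i)'}\eta_i^*=\eta_i^*$. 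Comparing the two expressions gives $c=(-1)^i$. Consequently the $+1$-eigenspace of $\sigma_{L(k,i)'}$ corresponds under $\phi$ to the $(-1)^i$-eigenspace of $\sigma_{L(k,i)}$, which is $L(k,i)^+$ when $i$ is even and $L(k,i)^-$ when $i$ is odd; the statement for $L(k,i)^-$ follows symmetrically.

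For part (2), since $\sigma^2=1$ the contragredient of a $\sigma$-twisted module is again $\sigma$-twisted, so $(\overline{L(k,i)})'\cong\overline{L(k,j)}$ as $L(k,0)$-modules for some $j$ by Theorem \ref{thm:construct}. To identify $j$, I would use that $h'(-1)\mathbf{1}\in L(k,0)^{\sigma}$ is a primary vector of weight $1$, so from the contragredient formula its zero mode $h'_0$ acts on the top level of the contragredient with the opposite eigenvalue; since $h'_0$ acts on the lowest weight vector of $\overline{L(k,i)}$ by $k/2-i$, matching forces $k/2-j=-(k/2-i)$, i.e.\ $j=k-i$. To refine this decomposition to $L(k,0)^{\sigma}$-modules, I would compare conformal weights: contragredient preserves conformal weights, so $(\overline{L(k,i)}^+)'$ has the same lowest weight $\tfrac{i(i-k)}{4(k+2)}+\tfrac{k}{16}$ as $\overline{L(k,i)}^+$, which matches the lowest weight of $\overline{L(k,k-i)}^+$ but differs from that of $\overline{L(k,k-i)}^-$ by $\tfrac12$; the analogous matching for the $-$ case forces $(\overline{L(k,i)}^-)'\cong\overline{L(k,k-i)}^-$.

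The main technical subtlety is the scalar determination $c=(-1)^i$ in part (1): it requires carefully distinguishing the twisted action of $\sigma$ on the $\sigma$-stable module $L(k,i)$ from the induced action on its contragredient, and verifying via the defining contragredient pairing $\langle Y_{M'}(v,z)f,u\rangle=\langle f,Y_M(e^{zL(1)}(-z^{-2})^{L(0)}v,z^{-1})u\rangle$ that the top-level identification $\phi(\eta_i^*)\propto\xi_i$ is the correct one. Once this is nailed down, the remaining arguments reduce to bookkeeping with $h'(0)$-eigenvalues and conformal weights.
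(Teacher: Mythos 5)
Your argument is correct, and it is worth separating the two parts. For part (2) you follow essentially the same route as the paper: the paper also pins down the underlying twisted $L(k,0)$-module by computing $\langle o(h')\eta_{i}',\eta_{i}\rangle=-\langle\eta_{i}',o(h')\eta_{i}\rangle$ (so the $h'_{0}$-eigenvalue flips from $\tfrac{k}{2}-i$ to $i-\tfrac{k}{2}$, forcing $\eta_{i}'=\eta_{k-i}$), and then separates $\overline{L(k,k-i)}^{+}$ from $\overline{L(k,k-i)}^{-}$ by the $\tfrac{1}{2}$ gap in conformal weights. For part (1), however, your route is genuinely different. The paper works inside the lattice realization: from $\mathbf{1}\in L(k,0)^{+}\subseteq L(k,i)^{+}\boxtimes(L(k,i)^{+})'$ and the explicit lattice intertwining operator $\mathscr{Y}(e_{\alpha},z)$ it extracts $v^{i,0}=\sum_{|I|=i}e^{\alpha_{I}/2}\in(L(k,i)^{+})'$, and then uses $v^{i,0}=\tfrac{1}{i!}e(0)^{i}v^{i,i}$ together with Remark \ref{rmk:irreducible} to read off the parity. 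You instead invoke the abstract self-duality $L(k,i)'\cong L(k,i)$ over $L(k,0)$ and compare the two lifts of $\sigma$ via Schur's lemma, computing the sign $c=(-1)^{i}$ by tracking the $h'(0)$-eigenbasis of the top level ($\sigma(h')=h'$, $\sigma(e')=-e'$, so $\sigma\xi_{i}=(-1)^{i}\xi_{i}$ while $\sigma'\eta_{i}^{*}=\eta_{i}^{*}$ and $\phi(\eta_{i}^{*})\propto\xi_{i}$ by eigenvalue matching). Your version avoids the lattice machinery entirely and would apply verbatim to any $\sigma$-stable self-dual irreducible module; the paper's version is more computational but reuses the lattice formulas (\ref{eq:3.3}), (\ref{eq:3.4.}) that are needed anyway in Section 5. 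Two small points you should make explicit to close the argument: that the dual involution $\sigma'$ defined by $\langle\sigma'f,u\rangle=\langle f,\sigma^{-1}u\rangle$ really satisfies $\sigma'Y_{M'}(v,z)(\sigma')^{-1}=Y_{M'}(\sigma v,z)$ (this uses that $\sigma$ commutes with $L(0)$ and $L(1)$, and is what licenses the Schur argument), and that the $+1$-eigenspace of $\sigma'$ on $L(k,i)'$ is precisely $(L(k,i)^{+})'$, i.e.\ the functionals vanishing on $L(k,i)^{-}$. Neither is a gap, just bookkeeping.
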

\begin{proof} First we prove (1).
We know that if $M$ is a module of a vertex operator algebra $V$, and $M^{'}$ is the contragredient module of $M$, then $V\subseteq M\boxtimes M^{'}$. Note that $\mathbf{1}\in L(k,0)^{+}$, $v^{i,i}\in L(k,i)^{+}$, and from (\ref{eq:3.3}) we know that \begin{eqnarray}v^{i,i}=\sum\limits_{\tiny{\begin{split}J\subseteq\{1,2,\cdots,k\} \\  |J|=i \ \ \ \ \ \end{split}}}e^{-\alpha_{J}/2}.\end{eqnarray}
Since $\mathbf{1}\in L(k,0)^{+}\subseteq L(k,i)^{+}\boxtimes (L(k,i)^{+})^{'}$, from (\ref{eq:3.4.}), we can deduce that \begin{eqnarray}v^{i,0}=\sum\limits_{\tiny{\begin{split}I\subseteq\{1,2,\cdots,k\} \\  |I|=i \ \ \ \ \ \end{split}}}e^{\alpha_{I}/2} \in (L(k,i)^{+})^{'}.\end{eqnarray}
Note that $$v^{i,0}=\frac{1}{i!}e(0)^{i}v^{i,i}$$ for $i\neq 0$. From Remark \ref{rmk:irreducible}, we know that $v^{i,0}\in L(k,i)^{+}$ if $i\in 2\mathbb{Z}$ and $v^{i,0}\in L(k,i)^{-}$ if $i\in 2\mathbb{Z}+1$. That is,
if $i\in 2\mathbb{Z}$, $L(k,i)^{+}$ is self-dual, and if $i\in 2\mathbb{Z}+1$, $(L(k,i)^{+})^{'}\cong L(k,i)^{-}$. Thus, if $i\in 2\mathbb{Z}$, $L(k,i)^{-}$ is self-dual, and if $i\in 2\mathbb{Z}+1$, $(L(k,i)^{-})^{'}\cong L(k,i)^{+}$.

Next we prove (2), i.e., $(\overline{L(k,i)}^{+})'=\overline{L(k,k-i)}^{+}$. Notice that the top level of the irreducible $L(k,0)^{\sigma}$-module $\overline{L(k,i)}^{+}$ is one-dimensional and spanned by $\eta_{i}$. We denote the top level of the contragredient module $(\overline{L(k,i)}^{+})'$ by $\eta_{i}^{'}$. From the definition of contragredient module (\ref{eq:3.0}), we know that $\eta_{i}$ and $\eta_{i}^{'}$ have the same weight. Thus from (\ref{eq:3.1}) and Proposition \ref{prop:orbifold1}, we know that $\eta_{i}^{'}=\eta_{i}$ or $\eta_{i}^{'}=\eta_{k-i}$. Also from the definition of the contragredient module (\ref{eq:3.0}) and noting that $L_{\mraff}(0)h^{'}=h^{'}$, $L_{\mraff}(1)h^{'}=0$, we have that $$\langle o(h^{'})\eta_{i}^{'}, \eta_{i}\rangle=-\langle \eta_{i}^{'}, o(h^{'})\eta_{i}\rangle,$$ where $o(h^{'})=h^{'}_{\tiny{\mbox{wt}(h)-1}}=h^{'}_{0}.$ Since ${h_{0}^{'}}.\eta_{i}=(-i+\frac{k}{2})\eta_{i}$, it shows that $\eta_{i}^{'}=\eta_{k-i}$, which implies that $(\overline{L(k,i)}^{+})'=\overline{L(k,k-i)}^{+}$. It follows immediately that $(\overline{L(k,i)}^{-})'=\overline{L(k,k-i)}^{-}$.

\end{proof}

For $0\leq i\leq k, \ 0\leq j\leq k,\ 0\leq l\leq k$ such that $i+j+l\in 2\mathbb{Z}$, noticing that $i+j-l\notin 4{\mathbb{Z}}$ is equivalent to $i+j-l+2\in 4\mathbb{Z}$, we define

\[\mbox{sign}(i,j,l)^{+}=\begin{cases}+,\ & \mbox{if} \ i+j-l\in 4{\mathbb{Z}},\cr
-,\ &\mbox{if} \  i+j-l\notin 4{\mathbb{Z}},\end{cases}\]

and

\[\mbox{sign}(i,j,l)^{-}=\begin{cases}-,\ & \mbox{if} \ i+j-l\in 4{\mathbb{Z}},\cr
+,\ &\mbox{if} \  i+j-l\notin 4{\mathbb{Z}}.\end{cases}\]

  The following theorem together with Proposition \ref{fusionsymm.} and  Theorem \ref{thm:contragredient} give all the fusion rules for the ${\Z}_{2}$-orbifold affine vertex operator algebra $L(k,0)^{\sigma}$.

\begin{thm}\label{fusion.aff.} The fusion rules for the ${\mathbb{Z}}_{2}$-orbifold affine vertex operator algebra $L(k,0)^{\sigma}$ are as follows:
\begin{eqnarray}\label{fusion.untwist1..}
L(k,i)^{+}\boxtimes L(k,j)^{\pm}=\sum\limits_{\tiny{\begin{split}|i-j|\leq l\leq i+j \\  i+j+l\in 2\mathbb{Z} \ \ \ \\ i+j+l\leq 2k\ \ \ \end{split}}} L(k,l)^{\mbox{sign}(i,j,l)^{\pm}},
\end{eqnarray}

\begin{eqnarray}\label{fusion.untwist2}
L(k,i)^{-}\boxtimes L(k,j)^{\pm}=\sum\limits_{\tiny{\begin{split}|i-j|\leq l\leq i+j \\  i+j+l\in 2\mathbb{Z} \ \ \ \\ i+j+l\leq 2k\ \ \ \end{split}}}  L(k,l)^{\mbox{sign}(i,j,l)^{\mp}},
\end{eqnarray}

\begin{eqnarray}\label{fusion.twist1}
L(k,i)^{+}\boxtimes \overline{L(k,j)}^{\pm}=\sum\limits_{\tiny{\begin{split}|i-j|\leq l\leq i+j \\  i+j+l\in 2\mathbb{Z} \ \ \ \\ i+j+l\leq 2k\ \ \ \end{split}}} \overline{L(k,l)}^{\mbox{sign}(i,j,l)^{\pm}},
\end{eqnarray}

\begin{eqnarray}\label{fusion.twist2}
L(k,i)^{-}\boxtimes \overline{L(k,j)}^{\pm}=\sum\limits_{\tiny{\begin{split}|i-j|\leq l\leq i+j \\  i+j+l\in 2\mathbb{Z} \ \ \ \\ i+j+l\leq 2k\ \ \ \end{split}}}  \overline{L(k,l)}^{\mbox{sign}(i,j,l)^{\mp}}.
\end{eqnarray}
\end{thm}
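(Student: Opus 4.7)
The plan is to handle the four fusion rules in two stages: first establish (\ref{fusion.twist1}) and (\ref{fusion.twist2}) for the untwisted $\times$ twisted products, then deduce (\ref{fusion.untwist1..}) and (\ref{fusion.untwist2}) for the untwisted $\times$ untwisted products by exploiting the $\sigma$-action on the $\Delta$-operator. The ingredients I will use are Lemma \ref{affine} (fusion rules for $L(k,0)$), Lemma \ref{lem:intertwining.} (the $\Delta$-construction of intertwiners), Lemma \ref{intertwining} (non-vanishing of intertwiners on nonzero vectors), Proposition \ref{fusionsymm.} (symmetry of fusion rules), and Theorem \ref{thm:contragredient} (contragredient modules).

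For the first stage, I start from a nonzero intertwining operator $\mathcal{Y}$ of $L(k,0)$ of type $\binom{L(k,l)}{L(k,i)\ L(k,j)}$ for each admissible triple $(i,j,l)$ produced by Lemma \ref{affine}. By Lemma \ref{lem:intertwining.}, $\widetilde{\mathcal{Y}}(v,z)=\mathcal{Y}(\Delta(h'',z)v,z)$ is an $L(k,0)^{\sigma}$-intertwiner of type $\binom{\overline{L(k,l)}}{L(k,i)\ \overline{L(k,j)}}$. Decomposing each $L(k,0)$-module into its $\sigma$-eigenspaces, the restriction of $\widetilde{\mathcal{Y}}$ yields an $L(k,0)^{\sigma}$-intertwiner among the irreducible modules $L(k,i)^{\epsilon_{1}}$, $\overline{L(k,j)}^{\epsilon_{2}}$, $\overline{L(k,l)}^{\epsilon_{3}}$; by Lemma \ref{intertwining} and the uniqueness (up to scalar) of the parent $\mathcal{Y}$, exactly one output parity $\epsilon_{3}$ survives for each fixed pair $(\epsilon_{1},\epsilon_{2})$. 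To identify this $\epsilon_{3}$, I evaluate $\widetilde{\mathcal{Y}}$ on the concrete lowest-weight vectors $v^{i,i}\in L(k,i)^{+}$ from the lattice realization (\ref{eq:3.3}) and $\eta_{j}\in\overline{L(k,j)}^{+}$ from Theorem \ref{thm:construct}, extract the leading term via the lattice formula (\ref{eq:3.4.}) and the explicit form of $\Delta(h'',z)$, and compare its $\sigma$-parity with those of $\eta_{l}$ and $e'(0)\eta_{l}$ in $\overline{L(k,l)}^{\pm}$. The $h''(0)$-eigenvalue computation, together with the pairings of lattice exponentials, should produce the combinatorial law $\mbox{sign}(i,j,l)^{+}=+$ iff $i+j-l\in 4\mathbb{Z}$.

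For the second stage, I repeat the same restriction-to-$\sigma$-eigenspaces argument applied to a generating $L(k,0)$-intertwiner of type $\binom{L(k,l)}{L(k,i)\ L(k,j)}$ directly, now producing $L(k,0)^{\sigma}$-intertwiners among the untwisted modules $L(k,i)^{\pm}$, $L(k,j)^{\pm}$, $L(k,l)^{\pm}$. Because $h''$ is $\sigma$-fixed, the operator $\Delta(h'',z)$ is $\sigma$-invariant, so the combinatorial sign law derived in the first stage transfers verbatim, yielding (\ref{fusion.untwist1..}). Formula (\ref{fusion.untwist2}) then follows from (\ref{fusion.untwist1..}) via Proposition \ref{fusionsymm.} and Theorem \ref{thm:contragredient}: fusing by $L(k,i)^{-}$ is obtained from fusing by $L(k,i)^{+}$ through a contragredient twist, which for odd $i$ flips $+\leftrightarrow -$ and for even $i$ preserves the sign, producing exactly $\mbox{sign}(i,j,l)^{\mp}$ from $\mbox{sign}(i,j,l)^{\pm}$.

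The main obstacle is the sign determination in the first stage: verifying that the concrete interaction between the lattice vectors $v^{i,i}$, the twisted lowest-weight vectors $\eta_{j}$, and the half-integer $z$-exponents introduced by $\Delta(h'',z)$ yields precisely the $i+j-l \pmod 4$ rule rather than some other parity of $i+j+l$. This requires careful bookkeeping of the $\sigma$-parities of all intermediate vectors and of the fractional conformal-weight shifts. A consistency check via associativity of the fusion algebra, together with small-$k$ verifications (e.g.\ $k=3$), would guard against sign errors and confirm the combinatorial rule.
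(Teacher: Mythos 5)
Your overall strategy coincides with the paper's: transport an $L(k,0)$-intertwiner of type $\left(\begin{array}{c}L(k,l)\\ L(k,i)\ L(k,j)\end{array}\right)$ to a twisted $L(k,0)^{\sigma}$-intertwiner via $\Delta(h'',z)$ (Lemma \ref{lem:intertwining.}), settle the signs in the untwisted $\times$ twisted products first, and then use $\sigma(h'')=h''$ to transfer the sign law back to the untwisted $\times$ untwisted products. However, the decisive step --- establishing that the surviving output parity is governed by $i+j-l \bmod 4$ --- is exactly the step you leave open, and the route you sketch for it (evaluating leading terms of lattice vertex operators on $v^{i,i}$ and $\eta_j$ and tracking $\sigma$-parities of the resulting vectors) is not what is needed and would be quite painful to push through. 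The paper's mechanism is much simpler and purely numerical: since $h''(0)\eta_i=-\tfrac{i}{4}\eta_i$, one has $\widetilde{\mathcal{Y}}(\eta_i,z)\eta_j=z^{-i/4}\mathcal{Y}(\eta_i,z)\eta_j$, and the two parities $\overline{L(k,l)}^{+}$ and $\overline{L(k,l)}^{-}$ are distinguished by whether their weights lie in $\tilde a_l+\Z$ or $\tilde a_l+\tfrac12+\Z$ (the lowest weight vectors sit in $L(k,l)(0)$ and $L(k,l)(\tfrac12)$ respectively, by Proposition \ref{prop:orbifold1}). So one only has to decide whether the total $z$-exponent $a_i+a_j-a_l-\tilde a_i-\tilde a_j+\tilde a_l+\tfrac{i}{4}$ is an integer or a half-integer, and plugging in $a_i=\tfrac{i(i+2)}{4(k+2)}$ and $\tilde a_i=\tfrac{i(i-k)}{4(k+2)}+\tfrac{k}{16}$ reduces this to $i+j-l\in 4\Z$ versus $i+j-l+2\in 4\Z$. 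No lattice realization, leading-term extraction, or small-$k$ verification enters; your proposal as written does not contain this computation and so does not actually prove the sign rule.

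A second, smaller gap: your claim that ``exactly one output parity survives for each fixed pair $(\epsilon_1,\epsilon_2)$, by Lemma \ref{intertwining} and uniqueness of the parent $\mathcal{Y}$'' is not by itself a proof that the displayed sums are the full fusion products; one still needs an upper bound on $N^{W^3}_{W^1,W^2}$ for $L(k,0)^{\sigma}$ (the paper leans on the mutually exclusive integrality conditions above, which show the same $\widetilde{\mathcal{Y}}$ cannot feed both parities, together with Lemma \ref{affine} giving multiplicity one upstairs). Finally, for the untwisted $\times$ untwisted case your first suggestion --- restricting the untwisted intertwiner directly to $\sigma$-eigenspaces --- cannot determine the signs, since both $L(k,l)^{+}$ and $L(k,l)^{-}$ have weights in $a_l+\Z$ and no integrality criterion separates them; only your second remark (that $\Delta(h'',z)\eta_i=z^{-i/4}\eta_i$ with $h''$ fixed by $\sigma$ makes $\mathcal{Y}\mapsto\widetilde{\mathcal{Y}}$ a parity-preserving bijection between the untwisted and twisted intertwiner spaces) actually does the job, and that is precisely the paper's argument for (\ref{fusion.untwist1..}) and (\ref{fusion.untwist2}).
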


\begin{proof} Let $\mathcal{Y}(\cdot,z)$ be an intertwining operator of $L(k,0)$ of type $\left(\begin{array}{c}
L(k,l)\\
L(k,i) \ L(k,j)
\end{array}\right)$. From Lemma \ref{lem:intertwining.}, we know that $\widetilde{\mathcal{Y}}(\cdot,z)$ is an intertwining operator of $L(k,0)^{\sigma}$ of type
$\left(\begin{array}{c}
\overline{L(k,l)}\\
L(k,i) \ \overline{L(k,j)}
\end{array}\right)$, where $\widetilde{\mathcal{Y}}(v,z)=\mathcal{Y}(\Delta(h^{''},z)v,z)$ for $v\in L(k,i)$. Thus we have
\begin{eqnarray}\label{fusion}
\widetilde{\mathcal{Y}}(\eta_{i},z)=\mathcal{Y}(\Delta(h^{''},z)\eta_{i},z)=z^{-\frac{i}{4}}\mathcal{Y}(\eta_{i},z),
\end{eqnarray}
where $\eta_{i}=\sum_{j=0}^{i}(-1)^{j}v^{i,j}$ is the lowest weight vector of the $\sigma$-twisted module $\overline{L(k,i)}$.
From  (\ref{lowest1}), we know that $\eta_{i}$ has the weight $\frac{i(i+2)}{4(k+2)}$ in $L(k,i)$ for $0\leq i\leq k$. For simplicity, we denote $a_{i}=\frac{i(i+2)}{4(k+2)}$. From (\ref{lowest2}), we know that $\eta_{i}$ has the weight $\frac{i(i-k)}{4(k+2)}+\frac{k}{16}$ in $\overline{L(k,i)}$ for $0\leq i\leq k$. We denote $\tilde{a}_{i}=\frac{i(i-k)}{4(k+2)}+\frac{k}{16}$. From Lemma \ref{affine}, we know that the fusion rule of the affine vertex operator algebra $L(k,0)$ is $$L(k,i)\boxtimes_{L(k,0)} L(k,j)=\sum\limits_{l} L(k,l),$$
where $|i-j|\leq l\leq i+j, \ i+j+l\in 2\mathbb{Z},\ i+j+l\leq 2k.$ From (\ref{fusion}), we have
\begin{eqnarray}\label{fusionidentity}\widetilde{\mathcal{Y}}(\eta_{i},z)\eta_{j}=\mathcal{Y}(\Delta(h^{''},z)\eta_{i},z)\eta_{j}
=z^{-\frac{i}{4}}\mathcal{Y}(\eta_{i},z)\eta_{j},
\end{eqnarray}
which implies that the fact that $\widetilde{\mathcal{Y}}$ is the intertwining operator of $L(k,0)^{\sigma}$ of type
$\left(\begin{array}{c}
{\overline{L(k,l)}}^{+}\\
L(k,i)^{+} \ {\overline{L(k,j)}^{+}}
\end{array}\right)$ is equivalent to
\begin{eqnarray}\label{fusion2}
a_{i}+a_{j}-a_{l}-\tilde{a}_{i}-\tilde{a}_{j}+\tilde{a}_{l}+\frac{i}{4}\in \mathbb{Z},
\end{eqnarray}
that is, $i+j-l\in 4\mathbb{Z}$.
And the fact that $\widetilde{\mathcal{Y}}$ is the intertwining operator of $L(k,0)^{\sigma}$ of type
$\left(\begin{array}{c}
{\overline{L(k,l)}}^{-}\\
L(k,i)^{+} \ {\overline{L(k,j)}^{+}}
\end{array}\right)$ is equivalent to
\begin{eqnarray}\label{fusion2}
a_{i}+a_{j}-a_{l}-\tilde{a}_{i}-\tilde{a}_{j}+\tilde{a}_{l}+\frac{i}{4}+\frac{1}{2}\in \mathbb{Z},
\end{eqnarray}
that is, $i+j-l+2\in 4\mathbb{Z}$. Since $i+j+l\in 2\mathbb{Z}$, it follows that $i+j-l+2\in 4\mathbb{Z}$ is equivalent to $i+j-l\notin 4\mathbb{Z}$. Thus from the definition of the symbol $\mbox{sign}(i,j,l)$, we obtain (\ref{fusion.twist1}) and (\ref{fusion.twist2}). Note that $\sigma(h^{''})=h^{''}$, thus $\eta_{i}$ and $\Delta(h^{''},z)\eta_{i}$ are in the same irreducible untwisted module of $L(k,0)^{\sigma}$, then by (\ref{fusionidentity}),  (\ref{fusion.twist1}) and (\ref{fusion.twist2}), we obtain (\ref{fusion.untwist1..}) and (\ref{fusion.untwist2}).
\end{proof}

\section{Quantum dimensions for irreducible $K_{0}^{\sigma}$-modules
}\label{Sect:quantum dimension}\def\theequation{4.\arabic{equation}}

In this section, we first recall some results on the quantum dimensions of irreducible $g$-twisted modules and irreducible $V^{G}$-modules for $G$ being a finite automorphism group of the vertex operator algebra $V$ following \cite{DRX}. Then we determine the quantum dimensions for irreducible modules of the orbifold vertex operator algebra $K_{0}^{\sigma}$.



 We now recall some notions about quantum dimensions. Let $V$ be a vertex operator algebra, $g$ an automorphism of $V$ with order $T$ and $M=\oplus_{n\in\frac{1}{T}\mathbb{Z}_{+}}M_{\lambda+n}$ a $g$-twisted $V$-module.

\begin{definition} For an homogeneous element $v\in V$, a trace function associated to $v$ is defined as follows:

\[
Z_{M}(v,q)=\mbox{tr}_{M}o(v)q^{L\left(0\right)-c/24}=q^{\lambda-c/24}\sum_{n\in\frac{1}{T}\mathbb{Z}_{+}}\mbox{tr}M_{\lambda+n}o(v)q^{n},
\]
where $o(v)=v(\mbox{wt}v-1)$ is the degree zero operator of $v$, $c$ is the central charge of the vertex operator algebra $V$
and $\lambda$ is the conformal weight of $M$. \end{definition}

It is proved \cite{Z,DLM4} that $Z_{M}(v,q)$ converges to a
holomorphic function in the domain $|q|<1$ if $V$ is $C_{2}$-cofinite. We denote the holomorphic
function $Z_{M}(v,q)$ by $Z_{M}\left(v,\tau\right)$. Here and
below, $\tau$ is in the upper half plane $\mathbb{H}$ and $q=e^{2\pi i\tau}$. Note that if $v=1$ is the vacuum vector, then $Z_{M}(1,q)$ is the formal character of $M$ and we denote $Z_{M}(1,q)$ and $Z_{M}(1,\tau)$ by $\chi_{M}(q)$ and $\chi_{M}(\tau)$ respectively for simplicity. $\chi_{M}(q)$ is called the character of $M$.







Let $V$ be a rational, $C_2$-cofinite, and selfdual vertex operator algebra of CFT type, and $G$ a finite automorphism group of $V$. Let $g\in G$ and $M$ a $g$-twisted $V$-module. The quantum dimension of $M$ over $V$ is defined to be $$\mbox{qdim}_{V}M=\lim_{y\to 0}\frac{\chi_{M}\left(iy\right)}{\chi_{V}\left(iy\right)},$$
 where $y$ is real and positive\cite{DRX}.



From \cite{M2} and \cite{CM}, we have
\begin{thm}\label{orbifold} If $V$ is a regular, selfdual vertex operator algebra of CFT type, and $G$ is solvable, then $V^{G}$ is a regular, selfdual vertex operator algebra of CFT type.
\end{thm}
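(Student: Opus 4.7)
The plan is to reduce to the case of a cyclic group of prime order and then invoke the orbifold results of Miyamoto and Carnahan--Miyamoto at each stage. Since $G$ is solvable, it admits a composition series $1 = G_0 \triangleleft G_1 \triangleleft \cdots \triangleleft G_n = G$ with each quotient $G_i / G_{i-1}$ cyclic of prime order. Because $V^G = ( \cdots ((V^{G_1})^{G_2/G_1}) \cdots )^{G_n/G_{n-1}}$, an induction on $|G|$ reduces the statement to the following claim: if $W$ is a regular, selfdual vertex operator algebra of CFT type and $\sigma \in \mathrm{Aut}(W)$ has prime order $p$, then $W^\sigma$ inherits all four properties.

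Of these four properties, CFT type is immediate: one has $W_0 = \mathbb{C}\mathbf{1} \subseteq W^\sigma$ since $\sigma(\mathbf{1}) = \mathbf{1}$, and $W_n = 0$ for $n < 0$ forces $(W^\sigma)_n = 0$ in the same range. The $C_2$-cofiniteness of $W^\sigma$ is the content of Miyamoto's theorem in \cite{M2}: the key step there is to exhibit $W$ as a finitely generated $W^\sigma$-module (using complete reducibility of the $\sigma$-action) and then show that $W^\sigma / C_2(W^\sigma)$ injects into a finite-dimensional quotient controlled by $W/C_2(W)$. The rationality of $W^\sigma$ is the main content of \cite{CM}: one assembles the irreducible $\sigma^i$-twisted $W$-modules ($0 \le i \le p-1$) into a module for the orbifold, applies modular invariance of pseudo-trace functions to obtain symmetric linear functionals on Zhu's algebra $A(W^\sigma)$, and concludes that every admissible $W^\sigma$-module is completely reducible.

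For selfduality, I would restrict the nondegenerate invariant bilinear form $\langle \cdot, \cdot \rangle : W \times W \to \mathbb{C}$ to $W^\sigma \times W^\sigma$. Invariance and symmetry are inherited trivially. To see that the restriction is nondegenerate, suppose $v \in W^\sigma$ satisfies $\langle v, w \rangle = 0$ for every $w \in W^\sigma$. Since $\sigma$ fixes the Virasoro vector (hence commutes with $L(0)$ and $L(1)$) and preserves $Y$, we have $\langle \sigma u_1, \sigma u_2 \rangle = \langle u_1, u_2 \rangle$ for all $u_1, u_2 \in W$. For arbitrary $u \in W$, the average $\pi(u) = \tfrac{1}{p}\sum_{i=0}^{p-1}\sigma^i(u)$ lies in $W^\sigma$, and using $\sigma(v)=v$ one computes
\[
\langle v, u \rangle = \langle \sigma^i v, \sigma^i u \rangle = \langle v, \sigma^i u \rangle \quad (0 \le i < p),
\]
so $\langle v, u \rangle = \langle v, \pi(u) \rangle = 0$. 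Nondegeneracy of the form on $W$ then forces $v = 0$, whence $W^\sigma$ is selfdual. Running the induction on the composition series of $G$ yields the theorem.

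The main obstacle is the rationality step: $C_2$-cofiniteness, CFT type, and selfduality are comparatively formal, but proving that $W^\sigma$ has only finitely many irreducibles and semisimple module category requires the full strength of the modular-invariance machinery for twisted trace functions developed in \cite{DLM4} together with the reconstruction of $W^\sigma$-modules from twisted $W$-modules carried out in \cite{CM}. The other structural complication, but one that is handled by the cited papers, is that even when $G$ is cyclic of prime order one must first know enough about $g$-twisted $W$-modules (existence and $g$-rationality) to set the machinery in motion; for a general solvable $G$, the inductive step also needs one to track that the intermediate fixed-point algebras remain regular so that the hypotheses of the next induction step are met.
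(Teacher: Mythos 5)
Your proposal is correct and takes essentially the same approach as the paper: the paper gives no proof of this theorem at all, simply quoting it from \cite{M2} and \cite{CM}, and your reduction of the solvable case to cyclic prime-order stages via a composition series, with Miyamoto supplying $C_2$-cofiniteness and Carnahan--Miyamoto supplying rationality at each stage, is exactly the derivation those references are intended to provide. Your elementary verifications that CFT type and selfduality pass to $W^{\sigma}$ (the latter via the eigenspace/averaging argument for nondegeneracy of the restricted invariant form) are also correct.
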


From now on, we assume $V$ is a rational, $C_{2}$-cofinite vertex
operator algebra of CFT type with $V\cong V'$. Let $M^{0}\cong V,\, M^{1},\,\cdots,\, M^{d}$
denote all inequivalent irreducible $V$-modules. Moreover, we assume
the conformal weights $\lambda_{i}$ of $M^{i}$ are positive for
all $i>0.$ From Theorem \ref{orbifold}, the orbifold parafermion vertex operator algebra $K_0^{\sigma}$
satisfies all the assumptions.

The following result shows that the quantum dimensions are multiplicative under tensor product \cite{DJX} .

\begin{prop}\label{quantum-product} Let $V$ and $M_i$ for $0\leq i\leq d$ be as above. Then
\[
\mbox{qdim}_{V}\left(M^{i}\boxtimes M^{j}\right)=\mbox{qdim}_{V}M^{i}\cdot \mbox{qdim}_{V}M^{j}
\]
for $i,\, j=0,\cdots,\, d.$
\end{prop}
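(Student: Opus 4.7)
The strategy is to identify the quantum dimension with a ratio of modular $S$-matrix entries and then invoke the Verlinde formula; multiplicativity will then drop out of the orthogonality of $S$. Since $V$ is rational, $C_{2}$-cofinite, selfdual and of CFT type, Zhu's theorem applies and the characters $\chi_{M^{i}}(\tau)$ transform linearly under $\tau\mapsto -1/\tau$: there is an invertible matrix $S=(S_{ij})$ such that $\chi_{M^{i}}(-1/\tau)=\sum_{j}S_{ij}\chi_{M^{j}}(\tau)$. Moreover Huang's proof of the Verlinde formula gives $N_{ij}^{k}=\sum_{m}S_{im}S_{jm}\overline{S_{km}}/S_{0m}$ and the unitarity $\sum_{k}\overline{S_{km}}S_{k0}=\delta_{m,0}$. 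Both inputs are available under the standing hypotheses.

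First I would establish the $S$-matrix formula
\[
\mbox{qdim}_{V}M^{i} \;=\; \frac{S_{i0}}{S_{00}}.
\]
Writing $\tau=iy$ with $y>0$ small, one has $\chi_{M^{i}}(iy)=\sum_{j}S_{ij}\chi_{M^{j}}(i/y)$ and similarly for $V=M^{0}$. Each character $\chi_{M^{j}}(i/y)$ is $q'^{\lambda_{j}-c/24}$ times a power series in $q'=e^{-2\pi/y}$, which converges in $|q'|<1$ by $C_{2}$-cofiniteness. Because $\lambda_{0}=0$ and $\lambda_{j}>0$ for $j>0$ (CFT type plus positivity of non-vacuum conformal weights), as $y\to 0^{+}$ the sum is dominated by the $j=0$ term and the common factor $q'^{-c/24}$ cancels in the ratio. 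This yields $\chi_{M^{i}}(iy)/\chi_{V}(iy)\to S_{i0}/S_{00}$.

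Next I would use that fusion with $M^{i}$ and $M^{j}$ decomposes as $M^{i}\boxtimes M^{j}=\bigoplus_{k}N_{ij}^{k}M^{k}$ by rationality, and that $\mbox{qdim}$ is additive under direct sums directly from its defining limit. Hence
\[
\mbox{qdim}_{V}\bigl(M^{i}\boxtimes M^{j}\bigr)\;=\;\sum_{k}N_{ij}^{k}\,\mbox{qdim}_{V}M^{k}\;=\;\frac{1}{S_{00}}\sum_{k}N_{ij}^{k}S_{k0}.
\]
Plugging in the Verlinde formula and swapping the sums,
\[
\sum_{k}N_{ij}^{k}S_{k0}\;=\;\sum_{m}\frac{S_{im}S_{jm}}{S_{0m}}\sum_{k}\overline{S_{km}}S_{k0}\;=\;\sum_{m}\frac{S_{im}S_{jm}}{S_{0m}}\delta_{m,0}\;=\;\frac{S_{i0}S_{j0}}{S_{00}},
\]
which after dividing by $S_{00}$ gives exactly $\mbox{qdim}_{V}M^{i}\cdot\mbox{qdim}_{V}M^{j}$.

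\textbf{Main obstacle.} The argument is really a packaging of two deep inputs (Zhu's modular invariance and Huang's Verlinde formula) together with the asymptotic analysis in the first step. The only genuine technical care is in that asymptotic step: one must justify that the $y\to 0^{+}$ limit can be taken term-by-term in the finite $S$-expansion and that the non-vacuum contributions really are subleading; positivity of $\lambda_{j}$ for $j>0$ together with convergence of each $\chi_{M^{j}}(i/y)$ on the relevant disk handles this. Everything else is formal manipulation via $S$-matrix identities.
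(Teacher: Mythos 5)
Your argument is correct, and it is essentially the proof of this statement found in the reference \cite{DJX} that the paper cites here in lieu of giving a proof: identify $\mbox{qdim}_{V}M^{i}$ with $S_{i0}/S_{00}$ via Zhu's modular invariance and the positivity of the non-vacuum conformal weights, then combine additivity over the fusion decomposition with Huang's Verlinde formula and the relation $\sum_{k}(S^{-1})_{mk}S_{k0}=\delta_{m,0}$. The only cosmetic point is that you can phrase the last identity with $S^{-1}$ directly rather than invoking unitarity of $S$, which sidesteps having to justify $\overline{S}=S^{-1}$.
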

Recalling from \cite{DLY2}, let $L=\mathbb{Z}\alpha_{1}+\cdots+\mathbb{Z}\alpha_{k}$ with $\langle \alpha_{i}, \alpha_{j} \rangle=2\delta_{ij}$ and let $\gamma=\alpha_{1}+\cdots+\alpha_{k}$, then $\langle \gamma, \gamma \rangle=2k$.
$V_{\mathbb{Z}\gamma}$ is the vertex operator algebra associated with a rank one lattice $\mathbb{Z}\gamma$ and as a $V_{\mathbb{Z}\gamma}\otimes K_0$-module (note that $K_0=M^{0,0}$), $L(k,i)$ has a decomposition:
\begin{eqnarray}
L(k,i)=\bigoplus_{j=0}^{k-1}V_{\mathbb{Z}\gamma+(i-2j)\gamma/2k}\otimes M^{i,j} \ \ \ \mbox{for} \ 0\leq i\leq k,\label{eq:4.0.}
\end{eqnarray}
where $V_{\mathbb{Z}\gamma+(i-2j)\gamma/2k}$ are the irreducible modules of the lattice vertex operator algebra $V_{\mathbb{Z}\gamma}$. Since every irreducible $V_{\mathbb{Z}\gamma}$-module is a simple current, we have
\begin{eqnarray}
\mbox{qdim}_{V_{\mathbb{Z}\gamma}}V_{\mathbb{Z}\gamma+(i-2j)\gamma/2k}=1.\label{eq:4.1.}
\end{eqnarray}

We get the following result on the quantum dimension of the orbifold parafermion vertex operator algebra $K_0^{\sigma}$.

\begin{thm}\label{quantum-dimension} The quantum dimensions for all irreducible $K_{0}^{\sigma}$-modules are
\begin{eqnarray} \mbox{qdim}_{K_{0}^{\sigma}}W(k,i)^{j}=\sqrt{k}\frac{\sin\frac{\pi(i+1)}{k+2}}{\sin\frac{\pi}{k+2}} \ \mbox{for} \ 0\leq i\leq k, i\neq \frac{k}{2} \ \mbox{if} \ k \ \mbox{is even}, j=1,2,\label{eq:4.3.}\end{eqnarray}
\begin{eqnarray} \mbox{qdim}_{K_{0}^{\sigma}}W(k,\frac{k}{2})^{j}=\mbox{qdim}_{K_{0}^{\sigma}}\widetilde{W(k,\frac{k}{2})}^{j}=\frac{\sqrt{k}}{2}\frac{\sin\frac{\pi(\frac{k}{2}+1)}{k+2}}{\sin\frac{\pi}{k+2}} \ \mbox{for} \ j=1,2,\label{eq:4.4.}\end{eqnarray}
\begin{eqnarray} \mbox{qdim}_{K_{0}^{\sigma}}(M^{i,j})^{s}=\frac{\sin\frac{\pi(i+1)}{k+2}}{\sin\frac{\pi}{k+2}}, \ \ s=0,1 \label{eq:4.5.}\end{eqnarray}
for $(M^{i,j})^{s}$ being the untwisted $K_{0}^{\sigma}$-module of type $I$.
\begin{eqnarray} \mbox{qdim}_{K_{0}^{\sigma}}M^{i,j}=2\frac{\sin\frac{\pi(i+1)}{k+2}}{\sin\frac{\pi}{k+2}} \label{eq:4.6.}\end{eqnarray}
for $M^{i,j}$ being the untwisted $K_{0}^{\sigma}$-module of type $II$.
\end{thm}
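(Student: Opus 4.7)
The plan is to reduce everything to three ingredients: the standard $\widehat{sl}_2$-quantum dimensions of $L(k,i)$ over $L(k,0)$, multiplicativity of qdim under outer tensor products and direct sums, and the general orbifold identity $\mbox{qdim}_{V^G}(V)=|G|$. I would proceed in three steps, corresponding to the three types of irreducible $K_0^{\sigma}$-modules listed in (\ref{eq:4.3.})--(\ref{eq:4.6.}).

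\textbf{Step 1 (quantum dimensions of $M^{i,j}$ over $K_0$).} Starting from the decomposition (\ref{eq:4.0.}) of $L(k,i)$ as a $V_{\mathbb{Z}\gamma}\otimes K_0$-module, multiplicativity of qdim under outer tensor products together with (\ref{eq:4.1.}) gives $\mbox{qdim}_{V_{\mathbb{Z}\gamma}\otimes K_0}(L(k,i)) = \sum_{j=0}^{k-1}\mbox{qdim}_{K_0}(M^{i,j})$. The $V_{\mathbb{Z}\gamma}$-cosets indexed by $j$ are permuted cyclically by fusion with the simple current $V_{\mathbb{Z}\gamma+\gamma/k}$, which correspondingly permutes the $M^{i,j}$ for fixed $i$; hence all $k$ summands share a common quantum dimension, and the sum equals $k\,\mbox{qdim}_{K_0}(M^{i,0})$. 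Chaining qdim through the inclusion $V_{\mathbb{Z}\gamma}\otimes K_0\subset L(k,0)$ (which contributes a factor of $k$ from the $i=0$ case since $M^{0,0}=K_0$) and comparing with the classical value $\mbox{qdim}_{L(k,0)}(L(k,i)) = \sin(\pi(i+1)/(k+2))/\sin(\pi/(k+2))$, I solve for $\mbox{qdim}_{K_0}(M^{i,j}) = \sin(\pi(i+1)/(k+2))/\sin(\pi/(k+2))$.

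\textbf{Step 2 (untwisted $K_0^{\sigma}$-modules).} The orbifold identity $\mbox{qdim}_{K_0^{\sigma}}(K_0)=2$ combined with the chain rule for qdim through the inclusion $K_0^{\sigma}\subset K_0$ yields $\mbox{qdim}_{K_0^{\sigma}}(M) = 2\,\mbox{qdim}_{K_0}(M)$ for any $K_0$-module $M$. For Type II modules, which are irreducible already as $K_0^{\sigma}$-modules, this directly delivers (\ref{eq:4.6.}). For Type I, the $\sigma$-stable $K_0$-module $M^{i,j}$ restricts to $(M^{i,j})^0\oplus (M^{i,j})^1$, the two summands being the $\pm 1$-eigenspaces of a fixed lift of $\sigma$ (and hence interchanged by that lift up to isomorphism); they have equal quantum dimensions, so each equals $\mbox{qdim}_{K_0}(M^{i,j})$, proving (\ref{eq:4.5.}).

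\textbf{Step 3 (twisted $K_0^{\sigma}$-modules).} For the twisted sector I would run an analogous argument on the $\sigma$-twisted side. Each $\sigma$-twisted $L(k,0)$-module $\overline{L(k,i)}$ from Theorem \ref{thm:construct} admits a decomposition parallel to (\ref{eq:4.0.}) as an outer tensor product of $(-1)$-twisted $V_{\mathbb{Z}\gamma}$-modules with $\sigma$-twisted $K_0$-modules; the latter are exactly those whose $K_0^{\sigma}$-decompositions yield the families $W(k,i)^{1,2}$ and, at $i=k/2$ for even $k$, also $\widetilde{W(k,k/2)}^{1,2}$. The key input is that the $(-1)$-twisted modules of the rank-one lattice VOA $V_{\mathbb{Z}\gamma}$ (with $\langle\gamma,\gamma\rangle=2k$) have quantum dimension $\sqrt{k}$, a classical computation going back to \cite{FLM,DL}. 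Combining this with $\mbox{qdim}_{L(k,0)}(\overline{L(k,i)}) = \sin(\pi(i+1)/(k+2))/\sin(\pi/(k+2))$ (same $S$-matrix entry as for $L(k,i)$ by Zhu's modular invariance), then dividing out the $\sqrt{k}$ contribution of the lattice-twisted factor and splitting each $\sigma$-twisted $K_0$-module into its two $K_0^{\sigma}$-summands, gives (\ref{eq:4.3.}). At $i=k/2$ with $k$ even, two distinct $\sigma$-twisted $K_0$-modules appear in the decomposition in place of one, each carrying half the contribution and each splitting further into two $K_0^{\sigma}$-modules, producing the additional factor $1/2$ in (\ref{eq:4.4.}).

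The main obstacle is Step 3: making precise the twisted analog of the decomposition (\ref{eq:4.0.}), correctly identifying the two inequivalent $\sigma$-twisted $K_0$-modules that appear at $i=k/2$ for even $k$, and keeping careful track of the $\sqrt{k}$ normalization from the $(-1)$-twisted rank-one lattice modules. Once these twisted branching rules are in place, the formulas (\ref{eq:4.3.})--(\ref{eq:4.4.}) follow mechanically from the same multiplicativity and additivity of qdim used in Steps 1--2.
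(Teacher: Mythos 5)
Your proposal follows essentially the same route as the paper's proof: the branching rule (\ref{eq:4.0.}) and its twisted analogues (\ref{eq:4.9.})--(\ref{eq:4.10.}), multiplicativity of quantum dimensions through the inclusions $V_{\mathbb{Z}\gamma}\otimes K_0\subseteq L(k,0)$ and $K_0^{\sigma}\subseteq K_0$, the value $\sqrt{k}$ for the twisted rank-one lattice modules, the equality $\mbox{qdim}_{L(k,0)}\overline{L(k,i)}=\mbox{qdim}_{L(k,0)}L(k,i)$, and the orbifold splitting rules of \cite{DRX} --- the only substantive difference being that you re-derive $\mbox{qdim}_{K_0}M^{i,j}$ from the branching rule via simple currents where the paper simply cites \cite{DW3}. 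One small correction to Step 2: the summands $(M^{i,j})^{0}$ and $(M^{i,j})^{1}$ are the eigenspaces of the lift of $\sigma$ and are preserved, not interchanged, by it, so their equal quantum dimension does not follow from the reason you give; it is Theorem 4.4/Corollary 4.5 of \cite{DRX} (which also covers the analogous claim you leave implicit for the two twisted summands $W(k,i)^{1},W(k,i)^{2}$), and this is precisely what the paper invokes.
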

\begin{proof} Since the quantum dimensions of irreducible modules $L(k,i)$ of affine vertex operator algebra $L(k,0)$ are
\begin{eqnarray*}
\mbox{qdim}_{L(k,0)}L(k,i)=\frac{\sin\frac{\pi(i+1)}{k+2}}{\sin\frac{\pi}{k+2}}
\end{eqnarray*}
for $0\leq i\leq k$. From  Proposition 4.1 of \cite{DJJJY}, we know that
\begin{eqnarray*}
\mbox{qdim}_{L(k,0)}\overline{L(k,i)}=\frac{\sin\frac{\pi(i+1)}{k+2}}{\sin\frac{\pi}{k+2}}.
\end{eqnarray*}
Since from \cite{DW3}, \begin{eqnarray}
\mbox{qdim}_{K_0}M^{i,j}=\frac{\sin\frac{\pi(i+1)}{k+2}}{\sin\frac{\pi}{k+2}},
\end{eqnarray}
 together with (\ref{eq:4.1.}), we have
 \begin{eqnarray*}
\mbox{qdim}_{V_{\mathbb{Z}\gamma}\otimes K_0}V_{\mathbb{Z}\gamma+(i-2j)\gamma/2k}\otimes M^{i,j}=\frac{\sin\frac{\pi(i+1)}{k+2}}{\sin\frac{\pi}{k+2}}.
\end{eqnarray*}
Thus, from (\ref{eq:4.0.}), we have
\begin{eqnarray*}
\mbox{qdim}_{V_{\mathbb{Z}\gamma}\otimes K_0}L(k,i)=k\frac{\sin\frac{\pi(i+1)}{k+2}}{\sin\frac{\pi}{k+2}}.
\end{eqnarray*}
From  Proposition 4.1 of \cite{DJJJY}, we have \begin{eqnarray}
\mbox{qdim}_{V_{\mathbb{Z}\gamma}\otimes K_0}\overline{L(k,i)}=k\frac{\sin\frac{\pi(i+1)}{k+2}}{\sin\frac{\pi}{k+2}}.\label{eq:4.8.}
\end{eqnarray}
Recall from \cite{JW} that all the irreducible twisted modules $W(k,i)$ of $K_0$ come from $\overline{L(k,i)}$ for $0\leq i\leq k$, or more precisely, for the fixed $i\neq\frac{k}{2}$, $W(k,i)$ is the only irreducible twisted module of $K_0$, and if $i=\frac{k}{2}$, there are two irreducible twisted modules $W(k,\frac{k}{2})$ and $\widetilde{W(k,\frac{k}{2})}$ of $K_0$. Note that if $i\neq\frac{k}{2}$, as the twisted module of the vertex operator algebra $V_{\mathbb{Z}\gamma}\otimes K_0$, $\overline{L(k,i)}$ has a decomposition:
\begin{eqnarray}
\overline{L(k,i)}=V_{\mathbb{Z}\gamma}^{T_{a_{i}}}\otimes W(k,i),\label{eq:4.9.}
\end{eqnarray}
where $a_{i}=1$ or $2$ depending on $i$, $V_{\mathbb{Z}\gamma}^{T_{a_{i}}}\in\{V_{\mathbb{Z}\gamma}^{T_{1}},\ V_{\mathbb{Z}\gamma}^{T_{2}}\}$, and $V_{\mathbb{Z}\gamma}^{T_{1}},\ V_{\mathbb{Z}\gamma}^{T_{2}}$ are the irreducible twisted $V_{\mathbb{Z}\gamma}$-modules \cite{D}. For $i=\frac{k}{2}$,
\begin{eqnarray}
\overline{L(k,\frac{k}{2})}=V_{\mathbb{Z}\gamma}^{T_{a_{\frac{k}{2}}}}\otimes W(k,\frac{k}{2})+V_{\mathbb{Z}\gamma}^{T^{'}_{a_{\frac{k}{2}}}}\otimes \widetilde{W(k,\frac{k}{2})},\label{eq:4.10.}
\end{eqnarray}
as a $V_{\mathbb{Z}\gamma}\otimes K_0$-twisted module, where $V_{\mathbb{Z}\gamma}^{T_{a_{\frac{k}{2}}}}, V_{\mathbb{Z}\gamma}^{T^{'}_{a_{\frac{k}{2}}}}\in \{V_{\mathbb{Z}\gamma}^{T_{1}},\ V_{\mathbb{Z}\gamma}^{T_{2}}\}$.
From \cite{DRX}, we know that $\mbox{qdim}_{V_{\mathbb{Z}\gamma}}V_{\mathbb{Z}\gamma}^{T_{i}}=\sqrt{k}$ for $i=1,2$. Together with (\ref{eq:4.8.}), (\ref{eq:4.9.}), (\ref{eq:4.10.}), we have
\begin{eqnarray*}
\mbox{qdim}_{K_0}W(k,i)=\sqrt{k}\frac{\sin\frac{\pi(i+1)}{k+2}}{\sin\frac{\pi}{k+2}}
\end{eqnarray*}
for $i\neq \frac{k}{2}$.
\begin{eqnarray*}
\mbox{qdim}_{K_0}W(k,\frac{k}{2})=\mbox{qdim}_{K_0}\widetilde{W(k,\frac{k}{2})}=\frac{\sqrt{k}}{2}\frac{\sin\frac{\pi(i+1)}{k+2}}{\sin\frac{\pi}{k+2}}.
\end{eqnarray*}
From the Theorem 4.4 of \cite{DRX}, we have
\begin{eqnarray*}
\mbox{qdim}_{K_{0}^{\sigma}}W(k,i)^{j}=\sqrt{k}\frac{\sin\frac{\pi(i+1)}{k+2}}{\sin\frac{\pi}{k+2}}
\end{eqnarray*}
for $i\neq \frac{k}{2}$, $j=1,2$, which proves (\ref{eq:4.3.}). Furthermore,
\begin{eqnarray*}
\mbox{qdim}_{K_{0}^{\sigma}}W(k,\frac{k}{2})^{j}=\mbox{qdim}_{K_{0}^{\sigma}}\widetilde{W(k,\frac{k}{2})}^{j}=\frac{\sqrt{k}}{2}\frac{\sin\frac{\pi(\frac{k}{2}+1)}{k+2}}{\sin\frac{\pi}{k+2}}
\end{eqnarray*}
for $j=1,2$, proving (\ref{eq:4.4.}). Since
\begin{eqnarray*}
\mbox{qdim}_{K_0}M^{i,j}=\frac{\sin\frac{\pi(i+1)}{k+2}}{\sin\frac{\pi}{k+2}},
\end{eqnarray*}
from  Corollary 4.5 of \cite{DRX}, we have
\begin{eqnarray*}
\mbox{qdim}_{K_{0}^{\sigma}}M^{i,j}=2\frac{\sin\frac{\pi(i+1)}{k+2}}{\sin\frac{\pi}{k+2}},
\end{eqnarray*}
for $M^{i,j}$ being the untwisted $K_{0}^{\sigma}$-module of type $II$, which proves (\ref{eq:4.6.}).
Finally we have
\begin{eqnarray*}
\mbox{qdim}_{K_{0}^{\sigma}}(M^{i,j})^{s}=\frac{\sin\frac{\pi(i+1)}{k+2}}{\sin\frac{\pi}{k+2}},\ \ s=0,1
\end{eqnarray*}
for $(M^{i,j})^{s}$ being the untwisted $K_{0}^{\sigma}$-module of type $I$. We obtain (\ref{eq:4.5.}).

\end{proof}
\section{Fusion rules for $\Z_{2}$-orbifold of the parafemion vertex operator algebra $K_{0}$
}\label{Sect: fusion product}\def\theequation{5.\arabic{equation}}
\setcounter{equation}{0}

In this section, we give the fusion rules for $K_{0}^{\sigma}$. To emphasize the action of the automorphism $\sigma$, we denote twisted type modules $W(k,i)^{1}$ by $W(k,i)^{+}$ and $W(k,i)^{2}$ by $W(k,i)^{-}$, and we denote $\widetilde{W(k,\frac{k}{2})}^{1}$ by $\widetilde{W(k,\frac{k}{2})}^{+}$ and  $\widetilde{W(k,\frac{k}{2})}^{2}$ by $\widetilde{W(k,\frac{k}{2})}^{-}$. We denote untwisted modules $(M^{i,j})^{0}$ of type $I$ by $(M^{i,j})^{+}$ and  $(M^{i,j})^{1}$ by $(M^{i,j})^{-}$. For the irreducible $K_{0}^{\sigma}$-modules $W^{1}$ and $W^{2}$, we use $W^{1}\boxtimes W^{2}$ to denote the fusion product $W^{1}\boxtimes_{K_{0}^{\sigma}}W^{2}$
for simplicity in this section.

 We first give the fusion rules for all the untwisted type modules.

\begin{thm}\label{para-fusion-untwist} The fusion rules for the irreducible untwisted type modules of the ${\mathbb{Z}}_{2}$-orbifold parafermion vertex operator algebra $K_{0}^{\sigma}$ are as follows:

(1) If $k\in 2\mathbb{Z}+1$, i.e., $k=2n+1$ for $n\geq 1$, we have
\begin{eqnarray}
(M^{k,0})^{+}\boxtimes (M^{i,j})^{\pm}=(M^{i,j})^{\pm},\label{eq:5.1}
\end{eqnarray}
where $(i,j)=(i,\frac{i}{2}), \ i=2,4,6,\cdots,2n,$ or $(i,j)=(2n+1,0)$.

\begin{eqnarray}
(M^{k,0})^{-}\boxtimes (M^{i,j})^{\pm}=(M^{i,j})^{\mp},\label{eq:5.1'}
\end{eqnarray}
where $(i,j)=(i,\frac{i}{2}), \ i=2,4,6,\cdots,2n,$ or $(i,j)=(2n+1,0)$.

\begin{eqnarray}
(M^{i,\frac{i}{2}})^{+}\boxtimes (M^{j,\frac{j}{2}})^{\pm}=\sum\limits_{\tiny{\begin{split}|i-j|\leq l\leq i+j \\  i+j+l\in 2\mathbb{Z} \ \ \ \\ i+j+l\leq 2k\ \ \ \end{split}}} (M^{l,\overline{(\frac{l}{2})}})^{\mbox{sign}(i,j,l)^{{\pm}}},\label{eq:5.2}
\end{eqnarray}

\begin{eqnarray}
(M^{i,\frac{i}{2}})^{-}\boxtimes (M^{j,\frac{j}{2}})^{\pm}=\sum\limits_{\tiny{\begin{split}|i-j|\leq l\leq i+j \\  i+j+l\in 2\mathbb{Z} \ \ \ \\ i+j+l\leq 2k\ \ \ \end{split}}} (M^{l,\overline{(\frac{l}{2})}})^{\mbox{sign}(i,j,l)^{{\mp}}},\label{eq:5.2'}
\end{eqnarray}
where $\overline{a}$ means the residue of the integer $a$ modulo $k$. The following is the same, which we will not point out again.

(2) If $k\in 2\mathbb{Z}$, i.e., $k=2n$ for $n\geq 2$, we have

\begin{eqnarray}
(M^{k,0})^{+}\boxtimes (M^{i,j})^{\pm}=(M^{i,j})^{\pm},\label{eq:5.3}
\end{eqnarray}

\begin{eqnarray}
(M^{k,0})^{-}\boxtimes (M^{i,j})^{\pm}=(M^{i,j})^{\mp},\label{eq:5.3'}
\end{eqnarray}
where $(i,j)=(i,\frac{i}{2}), \ i=2,4,6,\cdots,2n,$ $(i,j)=(n,0)$ or $(i,j)=(2n,0)$.

\begin{eqnarray}
(M^{i,\frac{i}{2}})^{+}\boxtimes (M^{j,\frac{j}{2}})^{\pm}=\sum\limits_{\tiny{\begin{split}|i-j|\leq l\leq i+j \\  i+j+l\in 2\mathbb{Z} \ \ \ \\ i+j+l\leq 2k\ \ \ \end{split}}} (M^{l,\overline{(\frac{l}{2})}})^{\mbox{sign}(i,j,l)^{{\pm}}},\label{eq:5.4}
\end{eqnarray}

\begin{eqnarray}
(M^{i,\frac{i}{2}})^{-}\boxtimes (M^{j,\frac{j}{2}})^{\pm}=\sum\limits_{\tiny{\begin{split}|i-j|\leq l\leq i+j \\  i+j+l\in 2\mathbb{Z} \ \ \ \\ i+j+l\leq 2k\ \ \ \end{split}}} (M^{l,\overline{(\frac{l}{2})}})^{\mbox{sign}(i,j,l)^{{\mp}}},\label{eq:5.4'}
\end{eqnarray}


\begin{eqnarray}
(M^{i,\frac{i}{2}})^{+}\boxtimes (M^{\frac{k}{2},0})^{\pm}=\sum\limits_{\tiny{\begin{split}|\frac{k}{2}-i|\leq l<\frac{k}{2} \\  i+j+l\in 2\mathbb{Z} \ \ \ \\ i+j+l\leq 2k\ \ \ \end{split}}} M^{l,\overline{(\frac{2l-k}{4}})}+(M^{\frac{k}{2},0})^{\pm},\label{eq:5.5}
\end{eqnarray}

\begin{eqnarray}
(M^{i,\frac{i}{2}})^{-}\boxtimes (M^{\frac{k}{2},0})^{\pm}=\sum\limits_{\tiny{\begin{split}|\frac{k}{2}-i|\leq l<\frac{k}{2} \\  i+j+l\in 2\mathbb{Z} \ \ \ \\ i+j+l\leq 2k\ \ \ \end{split}}} M^{l,\overline{(\frac{2l-k}{4}})}+(M^{\frac{k}{2},0})^{\mp},\label{eq:5.5'}
\end{eqnarray}


\begin{eqnarray}
(M^{\frac{k}{2},0})^{+}\boxtimes (M^{\frac{k}{2},0})^{\pm}=\sum\limits_{\tiny{\begin{split}0\leq l\leq k \\  k+l\in 2\mathbb{Z}  \\ l\leq k \ \ \end{split}}} (M^{k-l,\overline{(\frac{k-l}{2}})})^{\pm},\label{eq:5.6}
\end{eqnarray}

\begin{eqnarray}
(M^{\frac{k}{2},0})^{-}\boxtimes (M^{\frac{k}{2},0})^{\pm}=\sum\limits_{\tiny{\begin{split}0\leq l\leq k \\  k+l\in 2\mathbb{Z}  \\ l\leq k \ \ \end{split}}}(M^{k-l,\overline{(\frac{k-l}{2}})})^{\mp}.\label{eq:5.6'}
\end{eqnarray}

(3) If $k\in \mathbb{Z}$ and $k\geq 3$, we have
\begin{eqnarray}\begin{split}
&(M^{i,i^{'}})^{+}\boxtimes M^{j,j^{'}}=(M^{i,i^{'}})^{-}\boxtimes M^{j,j^{'}}\\
&=\sum\limits_{l} \Big((M^{l,\overline{\frac{1}{2}(2i^{'}-i+2j^{'}-j+l)}})^{+}
+(M^{l,\overline{\frac{1}{2}(2i^{'}-i+2j^{'}-j+l)}})^{-}\Big)
+\sum\limits_{l^{'}}  M^{l^{'},\overline{\frac{1}{2}(2i^{'}-i+2j^{'}-j+l^{'})}},
\label{eq:5.7}\end{split}
\end{eqnarray}
where $(M^{i,i^{'}})^{\pm}$ are untwisted modules of type $I$, $M^{j,j^{'}}$ are untwisted modules of type $II$, and $|i-j|\leq l\leq i+j, \ i+j+l\in 2\mathbb{Z},\ i+j+l\leq 2k$ such that
$(M^{l,\overline{\frac{1}{2}(2i^{'}-i+2j^{'}-j+l)}})^{\pm}$ are irreducible untwisted modules of type $I$. $|i-j|\leq l^{'}\leq i+j, \ i+j+l^{'}\in 2\mathbb{Z},\ i+j+l^{'}\leq 2k$ such that $M^{l^{'},\overline{\frac{1}{2}(2i^{'}-i+2j^{'}-j+l^{'})}}$ are irreducible untwisted modules of type $II$, Moreover, with fixed $i,i^{'},j,j^{'}$, $(M^{l,\overline{\frac{1}{2}(2i^{'}-i+2j^{'}-j+l)}})^{\pm}$ for $|i-j|\leq l\leq i+j, \ i+j+l\in 2\mathbb{Z},\ i+j+l\leq 2k$ are inequivalent irreducible modules. $M^{l^{'},\overline{\frac{1}{2}(2i^{'}-i+2j^{'}-j+l^{'})}}$ for $|i-j|\leq l^{'}\leq i+j, \ i+j+l^{'}\in 2\mathbb{Z},\ i+j+l^{'}\leq 2k$ are inequivalent irreducible modules.

(4) If $k\in \mathbb{Z}$ and $k\geq 3$, we have
\begin{eqnarray}
\begin{split}
M^{i,i^{'}}\boxtimes M^{j,j^{'}}&=\sum\limits_{l} \Big((M^{l,\overline{\frac{1}{2}(2i^{'}-i+2j^{'}-j+l)}})^{+}+(M^{l,\overline{\frac{1}{2}(2i^{'}-i+2j^{'}-j+l)}})^{-}\\
&+(M^{l,\overline{\frac{1}{2}(2i^{'}-i+2(j-j^{'})-j+l)}})^{+}+(M^{l,\overline{\frac{1}{2}(2i^{'}-i+2(j-j^{'})-j+l)}})^{-}\Big)\\
&+\sum\limits_{l^{'}}  \Big(M^{l^{'},\overline{\frac{1}{2}(2i^{'}-i+2j^{'}-j+l^{'})}}+M^{l^{'},\overline{\frac{1}{2}(2i^{'}-i+2(j-j^{'})-j+l^{'})}}\Big),\label{eq:5.8}
\end{split}
\end{eqnarray}
where $M^{i,i^{'}}, M^{j,j^{'}}$ are untwisted modules of type $II$, and $|i-j|\leq l\leq i+j, \ i+j+l\in 2\mathbb{Z},\ i+j+l\leq 2k$ such that
$(M^{l,\overline{\frac{1}{2}(2i^{'}-i+2j^{'}-j+l)}})^{\pm}$, $(M^{l,\overline{\frac{1}{2}(2i^{'}-i+2(j-j^{'})-j+l)}})^{\pm}$ are irreducible untwisted modules of type $I$. $|i-j|\leq l^{'}\leq i+j, \ i+j+l^{'}\in 2\mathbb{Z},\ i+j+l^{'}\leq 2k$ such that $M^{l^{'},\overline{\frac{1}{2}(2i^{'}-i+2j^{'}-j+l^{'})}}$, $M^{l^{'},\overline{\frac{1}{2}(2i^{'}-i+2(j-j^{'})-j+l^{'})}}$ are irreducible untwisted modules of type $II$, Moreover, with fixed $i,i^{'},j,j^{'}$, $(M^{l,\overline{\frac{1}{2}(2i^{'}-i+2j^{'}-j+l)}})^{\pm}$ and $(M^{l,\overline{\frac{1}{2}(2i^{'}-i+2(j-j^{'})-j+l)}})^{\pm}$ for $|i-j|\leq l\leq i+j, \ i+j+l\in 2\mathbb{Z},\ i+j+l\leq 2k$ are inequivalent irreducible $K_0^{\sigma}$-modules. $M^{l^{'},\overline{\frac{1}{2}(2i^{'}-i+2j^{'}-j+l^{'})}}$ and $M^{l^{'},\overline{\frac{1}{2}(2i^{'}-i+2(j-j^{'})-j+l^{'})}}$ for $|i-j|\leq l^{'}\leq i+j, \ i+j+l^{'}\in 2\mathbb{Z},\ i+j+l^{'}\leq 2k$ are inequivalent irreducible $K_0^{\sigma}$-modules.
\end{thm}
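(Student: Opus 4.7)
The strategy is to transfer fusion rules from $L(k,0)$ and its orbifold $L(k,0)^{\sigma}$ down to $K_0^{\sigma}$ via the decomposition (\ref{eq:4.0.}), which exhibits $L(k,i)$ as a $V_{\mathbb{Z}\gamma}\otimes K_0$-module, exploiting the fact that $(V_{\mathbb{Z}\gamma},K_0)$ is a commutant pair inside $L(k,0)$. Since $\sigma$ acts as $-1$ on $\gamma$, it preserves this tensor structure and yields an embedding $V_{\mathbb{Z}\gamma}^{\sigma}\otimes K_0^{\sigma}\hookrightarrow L(k,0)^{\sigma}$. Combined with the $L(k,0)^{\sigma}$-fusion of Theorem \ref{fusion.aff.}, the simple-current fusion rules for $V_{\mathbb{Z}\gamma}$, and the explicit lattice realization (\ref{the lowest weight}) of the lowest weight vectors $v^{i,j}$, this setup should pin down each fusion product on the $K_0^{\sigma}$ side.

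First I would refine (\ref{eq:4.0.}) to the $\sigma$-fixed level, identifying which summands $V_{\mathbb{Z}\gamma+(i-2j)\gamma/(2k)}\otimes M^{i,j}$ contribute type $I$ modules $(M^{i,j})^{\pm}$ (coming from $\sigma$-stable $M^{i,j}$, namely $j=i/2$ with $i$ even together with the boundary cases in Theorem \ref{thm:orbifold3'}) and which glue via a $\sigma$-orbit $\{M^{i,j},M^{i,i-j}\}$ into a single type $II$ module $M^{i,j}$. The $\pm$-sign of each type $I$ summand is determined by evaluating $\sigma$ on the explicit vector $v^{i,j}$ given by (\ref{the lowest weight}).

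Next, for parts (1)--(2) I would take an intertwining operator for $L(k,0)^{\sigma}$ of the form predicted by Theorem \ref{fusion.aff.} between the relevant $L(k,i)^{\pm}$'s and restrict it to the $K_0^{\sigma}$-isotypic component indexed by the $\sigma$-stable $M^{i,j}$'s. Since the lattice factor only contributes simple-current intertwiners, the $\mathrm{sign}(i,j,l)^{\pm}$-pattern from Theorem \ref{fusion.aff.} transfers verbatim to $K_0^{\sigma}$, and the second index $\overline{l/2}$ is read off from the lattice grading together with Lemma \ref{lem:lowest}. For parts (3)--(4) the same procedure applies but with at least one type $II$ factor: choosing either representative $M^{i,j}$ or $M^{i,i-j}$ in a $\sigma$-orbit independently for each slot produces the two summands in (\ref{eq:5.7}) or the four summands in (\ref{eq:5.8}), with the second indices arising from the lattice-grade arithmetic $(i-2i')+(j-2j')\equiv 2l'-l\pmod{2k}$.

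The main obstacle will be confirming that all the summands listed in (\ref{eq:5.7}) and (\ref{eq:5.8}) actually survive and are pairwise inequivalent, and that no additional summands appear. For existence I would construct explicit nonzero intertwiners of each required type using the lattice intertwining operator (\ref{eq:3.4.}) together with the alternate $sl_2$-basis $\{h',e',f'\}$ from Section 3, invoking Lemma \ref{intertwining} to ensure non-triviality on the relevant lowest weight vectors. Inequivalence of summands with the same level index $l$ but different $\pm$-signs is detected by the $\sigma$-action on the lowest weight vectors, while inequivalence among distinct second indices follows from the distinct conformal weights given by Lemma \ref{lem:lowest}. Finally, to rule out extra summands and confirm the overall count, I would match total quantum dimensions on both sides of each equation using multiplicativity (Proposition \ref{quantum-product}) and the explicit values in Theorem \ref{quantum-dimension}.
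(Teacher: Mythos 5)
Your proposal follows essentially the same route as the paper: an upper bound from the known $K_0$-fusion rules of Dong--Wang together with the decomposition (\ref{eq:4.0.}) and the orbifold affine fusion rules of Theorem \ref{fusion.aff.}, the lattice realization (\ref{eq:3.3})--(\ref{eq:3.4.}) of the vectors $v^{i,j}$ to pin down the $\pm$-signs of the type $I$ summands, the identification $M^{i,i'}\cong M^{i,i-i'}$ to produce the extra summands in (\ref{eq:5.8}), and multiplicativity of quantum dimensions (Proposition \ref{quantum-product}, Theorem \ref{quantum-dimension}) to rule out missing or extra constituents. The only cosmetic difference is that the paper cites Theorem 4.2 of \cite{DW3} for the pairwise inequivalence of the summands rather than comparing lowest conformal weights directly.
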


\begin{proof} Note that $(M^{k,0})^{+}=K_{0}^{+}$. Let $(M^{i,j}, Y_{M^{i,j}})$ for $1\leq i\leq k,\ 0\leq j\leq i-1$ be irreducible $K_0$-modules, then the operator $Y_{M^{i,j}}$ gives the nonzero intertwining operators for $K_0$ of type $\left(\begin{array}{c}
M^{i,j}\\
K_{0\ }M^{i,j}
\end{array}\right)$. Then by Lemma \ref{intertwining}, $Y_{M^{i,j}}(a,z)v$ is nonzero for any nonzero vectors $a\in K_0$, $v\in M^{i,j}$. Since $\sigma Y_{M^{i,j}}(a,z)\sigma^{-1}=Y_{M^{i,j}}(\sigma(a),z)$ for $a\in K_{0}$, $Y_{M^{i,j}}$ gives the nonzero intertwining operators for $K_{0}^{+}$ of type $\left(\begin{array}{c}
(M^{i,j})^{\pm}\\
K_{0}^{+} \ (M^{i,j})^{\pm}
\end{array}\right)$ and $\left(\begin{array}{c}
(M^{i,j})^{\mp}\\
K_{0}^{-} \ (M^{i,j})^{\pm}
\end{array}\right)$. This implies (\ref{eq:5.1}), (\ref{eq:5.1'}), (\ref{eq:5.3}), (\ref{eq:5.3'}).

For (\ref{eq:5.2}), (\ref{eq:5.2'}), (\ref{eq:5.4}), (\ref{eq:5.4'}), from \cite{DW3}, we know that
\begin{eqnarray}
M^{i,i^{'}}\boxtimes_{K_0} M^{j,j^{'}}=\sum\limits_{\tiny{\begin{split}|i-j|\leq l\leq i+j \\  i+j+l\in 2\mathbb{Z} \ \ \ \\ i+j+l\leq 2k\ \ \ \end{split}}} M^{l,\overline{\frac{1}{2}(2i^{'}-i+2j^{'}-j+l)}}.\label{eq:5.8.}
\end{eqnarray}
Thus we have
\begin{eqnarray*}
(M^{i,\frac{i}{2}})^{+}\boxtimes (M^{j,\frac{j}{2}})^{+}\subseteq \sum\limits_{\tiny{\begin{split}|i-j|\leq l\leq i+j \\  i+j+l\in 2\mathbb{Z} \ \ \ \\ i+j+l\leq 2k\ \ \ \end{split}}} M^{l,\overline{(\frac{l}{2})}}.
\end{eqnarray*}
Since $(M^{i,\frac{i}{2}})^{+}\subseteq L(k,i)^{+}$, and $L(k,i)$ has a decomposition (\ref{eq:4.0.}):
\begin{eqnarray*}
L(k,i)=\bigoplus_{j=0}^{k-1}V_{\mathbb{Z}\gamma+(i-2j)\gamma/2k}\otimes M^{i,j} \ \ \ \mbox{for} \ 0\leq i\leq k,
\end{eqnarray*}
we have $$V_{\mathbb{Z}\gamma}^{+}\otimes (M^{i,\frac{i}{2}})^{+}\subseteq L(k,i)^{+},\ V_{\mathbb{Z}\gamma}^{+}\otimes (M^{j,\frac{j}{2}})^{+}\subseteq L(k,j)^{+}.$$
Moreover, from Theorem \ref{fusion.aff.}, we know
\begin{eqnarray}\label{fusion.untwist1}
L(k,i)^{+}\boxtimes L(k,j)^{+}=\sum\limits_{\tiny{\begin{split}|i-j|\leq l\leq i+j \\  i+j+l\in 2\mathbb{Z} \ \ \ \\ i+j+l\leq 2k\ \ \ \end{split}}} L(k,l)^{\mbox{sign}(i,j,l)^{+}}.
\end{eqnarray}
Together with the facts that $V_{\mathbb{Z}\gamma}^{+}\boxtimes V_{\mathbb{Z}\gamma}^{+}=V_{\mathbb{Z}\gamma}^{+}$,
$ \mbox{qdim}_{K_{0}^{\sigma}}(M^{i,\frac{i}{2}})^{+}=\frac{\sin\frac{\pi(i+1)}{k+2}}{\sin\frac{\pi}{k+2}},  \
 $
and
$\mbox{qdim}_{K_{0}^{\sigma}}\Big((M^{i,\frac{i}{2}})^{+}\boxtimes(M^{j,\frac{j}{2}})^{+}\Big)=
\mbox{qdim}_{K_{0}^{\sigma}}(M^{i,\frac{i}{2}})^{+}\cdot\mbox{qdim}_{K_{0}^{\sigma}}(M^{j,\frac{j}{2}})^{+},
$ we can deduce that
\begin{eqnarray*}
(M^{i,\frac{i}{2}})^{+}\boxtimes (M^{j,\frac{j}{2}})^{+}=\sum\limits_{\tiny{\begin{split}|i-j|\leq l\leq i+j \\  i+j+l\in 2\mathbb{Z} \ \ \ \\ i+j+l\leq 2k\ \ \ \end{split}}} (M^{l,\overline{(\frac{l}{2})}})^{\mbox{sign}(i,j,l)^{{+}}}.
\end{eqnarray*}
Since
\begin{eqnarray*}
K_0^{-}\boxtimes (M^{i,\frac{i}{2}})^{+}\boxtimes (M^{j,\frac{j}{2}})^{+}=(M^{i,\frac{i}{2}})^{-}\boxtimes (M^{j,\frac{j}{2}})^{+}=(M^{i,\frac{i}{2}})^{+}\boxtimes (M^{j,\frac{j}{2}})^{-},
\end{eqnarray*}
and \begin{eqnarray*}
K_0^{-}\boxtimes (M^{i,\frac{i}{2}})^{+}\boxtimes (M^{j,\frac{j}{2}})^{-}=(M^{i,\frac{i}{2}})^{-}\boxtimes (M^{j,\frac{j}{2}})^{-}=(M^{i,\frac{i}{2}})^{+}\boxtimes (M^{j,\frac{j}{2}})^{+},
\end{eqnarray*}
together with (\ref{eq:5.8.}), we obtain (\ref{eq:5.2}), (\ref{eq:5.2'}), (\ref{eq:5.4}), (\ref{eq:5.4'}).

For (\ref{eq:5.5}) and (\ref{eq:5.5'}), from the fusion rule (\ref{eq:5.8.}) of irreducible $K_0$-modules, we have
\begin{eqnarray*}
(M^{i,\frac{i}{2}})^{+}\boxtimes (M^{\frac{k}{2},0})^{+}\subseteq \sum\limits_{\tiny{\begin{split}|\frac{k}{2}-i|\leq l\leq \frac{k}{2}+i, l\neq\frac{k}{2} \\  i+\frac{k}{2}+l\in 2\mathbb{Z} \ \ \ \\ i+\frac{k}{2}+l\leq 2k\ \ \ \end{split}}} M^{l,\overline{(\frac{2l-k}{4})}}+M^{\frac{k}{2},0}.
\end{eqnarray*}
Note that $M^{l,\overline{(\frac{2l-k}{4})}}$ for $|\frac{k}{2}-i|\leq l\leq \frac{k}{2}+i, l\neq\frac{k}{2}$ are irreducible modules of $K_{0}^{\sigma}$,
i.e., they are the untwisted modules of type $II$, and
we have $M^{l,\overline{(\frac{2l-k}{4})}}\cong M^{k-l,\overline{(\frac{k-2l}{4})}}$. Note that $M^{\frac{k}{2},0}=(M^{\frac{k}{2},0})^{+}+(M^{\frac{k}{2},0})^{-}$ as $K_0^{\sigma}$-module. From Theorem \ref{quantum-dimension}, we have
\begin{eqnarray*} \mbox{qdim}_{K_{0}^{\sigma}}(M^{i,\frac{i}{2}})^{+}=\frac{\sin\frac{\pi(i+1)}{k+2}}{\sin\frac{\pi}{k+2}},  \
 \mbox{qdim}_{K_{0}^{\sigma}}(M^{\frac{k}{2},0})^{+}=\frac{\sin\frac{\pi(\frac{k}{2}+1)}{k+2}}{\sin\frac{\pi}{k+2}},  \
\mbox{qdim}_{K_{0}^{\sigma}}M^{l,\overline{(\frac{2l-k}{4})}}=2\frac{\sin\frac{\pi(l+1)}{k+2}}{\sin\frac{\pi}{k+2}}.\end{eqnarray*}
By using
\begin{eqnarray*}\mbox{qdim}_{K_{0}^{\sigma}}\Big((M^{i,\frac{i}{2}})^{+}\boxtimes(M^{\frac{k}{2},0})^{+}\Big)=
\mbox{qdim}_{K_{0}^{\sigma}}(M^{i,\frac{i}{2}})^{+}\cdot\mbox{qdim}_{K_{0}^{\sigma}}(M^{\frac{k}{2},0})^{+},
\end{eqnarray*}
and noting that if $i\leq\frac{k}{2}$, then $l_{\tiny\mbox{min}}=\frac{k}{2}-i, \ l_{\tiny\mbox{max}}=\frac{k}{2}+i$, we have
$$\sum\limits_{\frac{k}{2}-i\leq l<\frac{k}{2}}\frac{\sin\frac{\pi(l+1)}{k+2}}{\sin\frac{\pi}{k+2}}=\sum\limits_{\frac{k}{2}< l\leq i+\frac{k}{2}}\frac{\sin\frac{\pi(l+1)}{k+2}}{\sin\frac{\pi}{k+2}}.$$
If $i>\frac{k}{2}$, then $l_{\tiny\mbox{min}}=i-\frac{k}{2}, \ l_{\tiny\mbox{max}}=2k-i-\frac{k}{2}=\frac{3k}{2}-i$. Thus
$$\sum\limits_{i-\frac{k}{2}\leq l<\frac{k}{2}}\frac{\sin\frac{\pi(l+1)}{k+2}}{\sin\frac{\pi}{k+2}}=\sum\limits_{\frac{k}{2}< l\leq \frac{3k}{2}-i}\frac{\sin\frac{\pi(l+1)}{k+2}}{\sin\frac{\pi}{k+2}}.$$
So we have
\begin{eqnarray*}
(M^{i,\frac{i}{2}})^{+}\boxtimes (M^{\frac{k}{2},0})^{+}=\sum\limits_{\tiny{\begin{split}|\frac{k}{2}-i|\leq l<\frac{k}{2} \\  i+\frac{k}{2}+l\in 2\mathbb{Z} \ \ \ \\ i+\frac{k}{2}+l\leq 2k\ \ \ \end{split}}} M^{l,\overline{(\frac{2l-k}{4})}}+(M^{\frac{k}{2},0})^{\epsilon},
\end{eqnarray*}
where $\epsilon=+$ or $-$. We now prove that $\epsilon=+$. Since we have mentioned in Section 3 that the irreducible modules $M^{i,j}$ for $1\leq i\leq k,\ 0\leq j\leq i-1$ can be realized in the lattice vertex operator algebra $V_{L^{\bot}}$, and
\begin{eqnarray*}
v^{i,\frac{i}{2}}=\sum\limits_{\tiny{\begin{split}I\subseteq\{1,2,\cdots,k\}, \\ |I|=i\ \ \ \ \ \end{split}}}\sum\limits_{\tiny{\begin{split}J\subseteq I, \\ |J|=\frac{i}{2} \ \end{split}}}e^{\alpha_{I-J}/2-\alpha_{J}/2}\in (M^{i,\frac{i}{2}})^{+},
\end{eqnarray*}
 in this case, we notice that $|I-J|=|J|$ and
 \begin{eqnarray*}
v^{\frac{k}{2},0}=\sum\limits_{\tiny{\begin{split}I\subseteq\{1,2,\cdots,k\} \\ |I|=\frac{k}{2}\ \ \ \ \ \end{split}}} e^{\alpha_{I}/2}\in (M^{\frac{k}{2},0})^{+}.
\end{eqnarray*}
From (\ref{eq:3.4.}), we can deduce that $v^{\frac{k}{2},0}$ can be obtained from $\mathscr{Y}^{\circ}(v^{i,\frac{i}{2}},z)v^{\frac{k}{2},0}$,
where $\mathscr{Y}^{\circ}$ is the nonzero intertwining operator for $V_{L}$ of type $\left(\begin{array}{c}
V_{\lambda_{1}+\lambda_{2}+L}\\
V_{\lambda_{1}+L} \ V_{\lambda_{2}+L}
\end{array}\right)$ for $\lambda_{1}, \lambda_{2}\in L^{\bot}$. Since $v^{\frac{k}{2},0}\in(M^{\frac{k}{2},0})^{+}$, this shows that $\epsilon=+$. Similar to the discussion in the end of the proof of (\ref{eq:5.2}), (\ref{eq:5.2'}), we obtain (\ref{eq:5.5}) and (\ref{eq:5.5'}).

For (\ref{eq:5.6}) and (\ref{eq:5.6'}), from the fusion rule (\ref{eq:5.8.}) of irreducible $K_0$-modules, we have
\begin{eqnarray*}
(M^{\frac{k}{2},0})^{+}\boxtimes (M^{\frac{k}{2},0})^{+}\subseteq \sum\limits_{\tiny{\begin{split}0 \leq l\leq k \\  k+l\in 2\mathbb{Z}  \\ k+l\leq 2k  \end{split}}} M^{l,\overline{(\frac{l-k}{2})}}.
\end{eqnarray*}
Note that
$M^{l,\overline{(\frac{l-k}{2})}}\cong M^{k-l,\overline{(\frac{k-l}{2})}}$ as $K_0$-modules, and
$M^{k-l,\overline{(\frac{k-l}{2})}}=(M^{k-l,\overline{(\frac{k-l}{2})}})^{+}\oplus (M^{k-l,\overline{(\frac{k-l}{2})}})^{-}$ as a $K_0^{\sigma}$-module.
From Theorem \ref{quantum-dimension}, we have
\begin{eqnarray*}
 \mbox{qdim}_{K_{0}^{\sigma}}(M^{\frac{k}{2},0})^{+}=\frac{\sin\frac{\pi(\frac{k}{2}+1)}{k+2}}{\sin\frac{\pi}{k+2}},  \
\mbox{qdim}_{K_{0}^{\sigma}}(M^{k-l,\overline{(\frac{k-l}{2})}})^{+}=\mbox{qdim}_{K_{0}^{\sigma}}(M^{k-l,\overline{(\frac{k-l}{2})}})^{-}
=\frac{\sin\frac{\pi(k-l+1)}{k+2}}{\sin\frac{\pi}{k+2}}.\end{eqnarray*}
By using
\begin{eqnarray*}\mbox{qdim}_{K_{0}^{\sigma}}\Big((M^{\frac{k}{2},0})^{+}\boxtimes(M^{\frac{k}{2},0})^{+}\Big)=
\mbox{qdim}_{K_{0}^{\sigma}}(M^{\frac{k}{2},0})^{+}\cdot\mbox{qdim}_{K_{0}^{\sigma}}(M^{\frac{k}{2},0})^{+},
\end{eqnarray*}
we can deduce that
\begin{eqnarray*}
(M^{\frac{k}{2},0})^{+}\boxtimes (M^{\frac{k}{2},0})^{+}=\sum\limits_{\tiny{\begin{split}0 \leq l\leq k \\  k+l\in 2\mathbb{Z}  \\ k+l\leq 2k  \end{split}}} (M^{l,\overline{(\frac{l-k}{2})}})^{\epsilon_{l}},
\end{eqnarray*}
where $\epsilon_{l}=+$ or $-$. We now prove that $\epsilon_{l}=+$.
Since \begin{eqnarray*}
v^{\frac{k}{2},0}=\sum\limits_{\tiny{\begin{split}I\subseteq\{1,2,\cdots,k\} \\ |I|=\frac{k}{2}\ \ \ \ \ \end{split}}} e^{\alpha_{I}/2}\in (M^{\frac{k}{2},0})^{+},
\end{eqnarray*}
and $M^{\frac{k}{2},0}\cong M^{\frac{k}{2},\frac{k}{2}}$ as $K_0$-module, we have
\begin{eqnarray*}
v^{\frac{k}{2},\frac{k}{2}}=\sum\limits_{\tiny{\begin{split}J\subseteq\{1,2,\cdots,k\} \\ |J|=\frac{k}{2}\ \ \ \ \ \end{split}}} e^{-\alpha_{J}/2}\in (M^{\frac{k}{2},0})^{+}.
\end{eqnarray*}
Then from (\ref{eq:3.4.}), we know that
\begin{eqnarray*}
v^{k-l,\frac{k-l}{2}}=\sum\limits_{\tiny{\begin{split}I\subseteq\{1,2,\cdots,k\}, \\ |I|=k-l\ \ \ \ \ \end{split}}}\sum\limits_{\tiny{\begin{split}J\subseteq I, \\ |J|=\frac{k-l}{2} \ \end{split}}}e^{\alpha_{I-J}/2-\alpha_{J}/2}\in (M^{l,\overline{(\frac{l-k}{2})}})^{+}
\end{eqnarray*}
can be obtained from $\mathscr{Y}^{\circ}(v^{\frac{k}{2},0},z)v^{\frac{k}{2},\frac{k}{2}}$,
where $\mathscr{Y}^{\circ}$ is the nonzero intertwining operator for $V_{L}$ of type $\left(\begin{array}{c}
V_{\lambda_{1}+\lambda_{2}+L}\\
V_{\lambda_{1}+L} \ V_{\lambda_{2}+L}
\end{array}\right)$ for $\lambda_{1}, \lambda_{2}\in L^{\bot}$. This shows that $\epsilon_{l}=+$. Similar to the discussion in the end of the proof of (\ref{eq:5.2}), (\ref{eq:5.2'}), we get (\ref{eq:5.6}) and (\ref{eq:5.6'}).

For (\ref{eq:5.7}), notice that
\begin{eqnarray*}
\begin{split}
M^{j,j^{'}}\boxtimes (M^{i,i^{'}})^{+}&=K_{0}^{+}\boxtimes M^{j,j^{'}}\boxtimes (M^{i,i^{'}})^{+}\\
&=K_{0}^{-}\boxtimes M^{j,j^{'}}\boxtimes (M^{i,i^{'}})^{+}\\
&=M^{j,j^{'}}\boxtimes K_{0}^{-}\boxtimes (M^{i,i^{'}})^{+}\\
&=M^{j,j^{'}}\boxtimes (M^{i,i^{'}})^{-},
\end{split}
\end{eqnarray*}
where $M^{j,j^{'}}$ are the untwisted modules of type $II$, and $(M^{i,i^{'}})^{+}$ are the untwisted modules of type $I$. From the fusion rule (\ref{eq:5.8.}) of irreducible $K_0$-modules, we have $I\left(\begin{array}{c}(M^{l,\overline{\frac{1}{2}(2i^{'}-i+2j^{'}-j+l)}})\\
(M^{i,i^{'}})^{+} \ \ M^{j,j^{'}}\end{array}\right)\neq 0$ for $|i-j|\leq l\leq i+j,\   i+j+l\in 2\mathbb{Z}, \  i+j+l\leq 2k$. From Theorem \ref{quantum-dimension}, we have
\begin{eqnarray*}
 \mbox{qdim}_{K_{0}^{\sigma}}(M^{i,i^{'}})^{+}=\frac{\sin\frac{\pi(i+1)}{k+2}}{\sin\frac{\pi}{k+2}},  \
\mbox{qdim}_{K_{0}^{\sigma}}M^{j,j^{'}}
=2\frac{\sin\frac{\pi(j+1)}{k+2}}{\sin\frac{\pi}{k+2}}.\end{eqnarray*}
By using
\begin{eqnarray*}\mbox{qdim}_{K_{0}^{\sigma}}\Big((M^{i,i^{'}})^{+}\boxtimes M^{j,j^{'}}\Big)=
\mbox{qdim}_{K_{0}^{\sigma}}(M^{i,i^{'}})^{+}\cdot\mbox{qdim}_{K_{0}^{\sigma}}M^{j,j^{'}},
\end{eqnarray*}
we can deduce that (\ref{eq:5.7}) hold. The second assertion follows from Theorem 4.2 of \cite{DW3} immediately.

For (\ref{eq:5.8}), from the fusion rule (\ref{eq:5.8.}) of irreducible $K_0$-modules, we have
\begin{eqnarray*}
M^{i,i^{'}}\boxtimes_{K_0} M^{j,j^{'}}=\sum\limits_{\tiny{\begin{split}|i-j|\leq l\leq i+j \\  i+j+l\in 2\mathbb{Z} \ \ \ \\ i+j+l\leq 2k\ \ \ \end{split}}} M^{l,\overline{\frac{1}{2}(2i^{'}-i+2j^{'}-j+l)}},
\end{eqnarray*}
where $M^{i,i^{'}}, M^{j,j^{'}}$ are the untwisted $K_0^{\sigma}$-modules of type $II$. From \cite{JW}, we know that $M^{i,i^{'}}\cong M^{i,i-i^{'}}$, $M^{j,j^{'}}\cong M^{j,j-j^{'}}$ as $K_0^{\sigma}$-module. Thus
\begin{eqnarray*}
M^{i,i^{'}}\boxtimes_{K_0^{\sigma}} M^{j,j^{'}}=M^{i,i^{'}}\boxtimes_{K_0^{\sigma}} M^{j,j-j^{'}}=M^{i,i-i^{'}}\boxtimes_{K_0^{\sigma}} M^{j,j-j^{'}}.
\end{eqnarray*}
Note that
\begin{eqnarray*}
M^{i,i^{'}}\boxtimes_{K_0} M^{j,j-j^{'}}=\sum\limits_{\tiny{\begin{split}|i-j|\leq l\leq i+j \\  i+j+l\in 2\mathbb{Z} \ \ \ \\ i+j+l\leq 2k\ \ \ \end{split}}} M^{l,\overline{\frac{1}{2}(2i^{'}-i+2(j-j^{'})-j+l)}},
\end{eqnarray*}
\begin{eqnarray*}
M^{i,i-i^{'}}\boxtimes_{K_0} M^{j,j-j^{'}}=\sum\limits_{\tiny{\begin{split}|i-j|\leq l\leq i+j \\  i+j+l\in 2\mathbb{Z} \ \ \ \\ i+j+l\leq 2k\ \ \ \end{split}}} M^{l,\overline{\frac{1}{2}(2(i-i^{'})-i+2(j-j^{'})-j+l)}}.
\end{eqnarray*}
We claim that $$M^{l,\overline{\frac{1}{2}(2i^{'}-i+2j^{'}-j+l)}}\cong M^{l,\overline{\frac{1}{2}(2(i-i^{'})-i+2(j-j^{'})-j+l)}}$$ for $|i-j|\leq l\leq i+j, \  i+j+l\in 2\mathbb{Z}, \ i+j+l\leq 2k$. If we can prove the claim, then we have
\begin{eqnarray*}
\sum\limits_{\tiny{\begin{split}|i-j|\leq l\leq i+j \\  i+j+l\in 2\mathbb{Z} \ \ \ \\ i+j+l\leq 2k\ \ \ \end{split}}} M^{l,\overline{\frac{1}{2}(2i^{'}-i+2j^{'}-j+l)}}+\sum\limits_{\tiny{\begin{split}|i-j|\leq l\leq i+j \\  i+j+l\in 2\mathbb{Z} \ \ \ \\ i+j+l\leq 2k\ \ \ \end{split}}} M^{l,\overline{\frac{1}{2}(2i^{'}-i+2(j-j^{'})-j+l)}}\subseteq M^{i,i^{'}}\boxtimes_{K_0^{\sigma}} M^{j,j^{'}}.
\end{eqnarray*}
Moreover, from Theorem \ref{quantum-dimension}, we have
\begin{eqnarray*}
 \mbox{qdim}_{K_{0}^{\sigma}}(M^{i,i^{'}})=2\frac{\sin\frac{\pi(i+1)}{k+2}}{\sin\frac{\pi}{k+2}},  \
\mbox{qdim}_{K_{0}^{\sigma}}M^{j,j^{'}}
=2\frac{\sin\frac{\pi(j+1)}{k+2}}{\sin\frac{\pi}{k+2}}.\end{eqnarray*}
Then (\ref{eq:5.8}) follows from
\begin{eqnarray*}\mbox{qdim}_{K_{0}^{\sigma}}\Big(M^{i,i^{'}}\boxtimes M^{j,j^{'}}\Big)=
\mbox{qdim}_{K_{0}^{\sigma}}M^{i,i^{'}}\cdot\mbox{qdim}_{K_{0}^{\sigma}}M^{j,j^{'}}.
\end{eqnarray*}
 The second assertion follows from Theorem 4.2 of \cite{DW3} immediately. We now prove the claim, i.e., $$M^{l,\overline{\frac{1}{2}(2i^{'}-i+2j^{'}-j+l)}}\cong M^{l,\overline{\frac{1}{2}(2(i-i^{'})-i+2(j-j^{'})-j+l)}}$$ for $|i-j|\leq l\leq i+j, \  i+j+l\in 2\mathbb{Z}, \ i+j+l\leq 2k.$
If $M^{l,\overline{\frac{1}{2}(2i^{'}-i+2j^{'}-j+l)}}$ is the untwisted $K_0^{\sigma}$-modules of type $II$, then from \cite{JW}, we have
\begin{eqnarray*}
M^{l,\overline{\frac{1}{2}(2i^{'}-i+2j^{'}-j+l)}}\cong M^{l,\overline{l-\frac{1}{2}(2i^{'}-i+2j^{'}-j+l)}}= M^{l,\overline{\frac{1}{2}(2(i-i^{'})-i+2(j-j^{'})-j+l)}}.
\end{eqnarray*}
If $M^{l,\overline{\frac{1}{2}(2i^{'}-i+2j^{'}-j+l)}}$ is the untwisted $K_0^{\sigma}$-modules of type $I$, we divide the proof of the claim into three cases:

(i) If $(l, \overline{\frac{1}{2}(2i^{'}-i+2j^{'}-j+l)})=(l,\overline{\frac{l}{2}})$, then $\overline{\frac{1}{2}(2i^{'}-i+2j^{'}-j+l)}=\overline{\frac{l}{2}}$, i.e., $\overline{2i^{'}-i}=\overline{j-2j^{'}}$, thus

\begin{eqnarray*}
M^{l,\overline{\frac{1}{2}(2(i-i^{'})-i+2(j-j^{'})-j+l)}}=M^{l,\overline{(\frac{l}{2})}}=M^{l,\overline{\frac{1}{2}(2i^{'}-i+2j^{'}-j+l)}}.
\end{eqnarray*}

(ii) If $(l, \overline{\frac{1}{2}(2i^{'}-i+2j^{'}-j+l)})=(k,\bar{0})$, then $\overline{\frac{1}{2}(2i^{'}-i+2j^{'}-j+k)}=\bar{0}$, i.e., $\overline{2i^{'}-i}=\overline{j-2j^{'}-k}$, thus

\begin{eqnarray*}
M^{l,\overline{\frac{1}{2}(2(i-i^{'})-i+2(j-j^{'})-j+l)}}=M^{k,\bar{k}}=M^{k,\bar{0}}=M^{l,\overline{\frac{1}{2}(2i^{'}-i+2j^{'}-j+l)}}.
\end{eqnarray*}

(iii) If $(l, \overline{\frac{1}{2}(2i^{'}-i+2j^{'}-j+l)})=(\frac{k}{2},\bar{0})$, then $\overline{\frac{1}{2}(2i^{'}-i+2j^{'}-j+\frac{k}{2})}=\bar{0}$, i.e., $\overline{2i^{'}-i}=\overline{j-2j^{'}-\frac{k}{2}}$, thus

\begin{eqnarray*}
M^{l,\overline{\frac{1}{2}(2(i-i^{'})-i+2(j-j^{'})-j+l)}}=M^{\frac{k}{2},\overline{(\frac{k}{2})}}=M^{\frac{k}{2},\bar{0}}=M^{l,\overline{\frac{1}{2}(2i^{'}-i+2j^{'}-j+l)}}.
\end{eqnarray*}
Thus we proved the claim.
\end{proof}

We now give the fusion products between untwisted type modules and twisted type modules.
\begin{thm}\label{para-fusion-twist} The fusion rules for the irreducible untwisted type modules and twisted type modules of the ${\mathbb{Z}}_{2}$-orbifold parafermion vertex operator algebra $K_{0}^{\sigma}$ are as follows:

(1) If $k\in 2\mathbb{Z}+1$, $0\leq j\leq \frac{k-1}{2}$, we have

\begin{eqnarray}
(M^{k,0})^{+}\boxtimes W(k,j)^{\pm}=W(k,j)^{\pm},\label{eq:5.9}
\end{eqnarray}

\begin{eqnarray}
(M^{k,0})^{-}\boxtimes W(k,j)^{\pm}=W(k,j)^{\mp}.\label{eq:5.9'}
\end{eqnarray}

If $k\in 2\mathbb{Z}$, $0\leq j\leq \frac{k}{2}$, we have
\begin{eqnarray}
(M^{k,0})^{+}\boxtimes W(k,j)^{\pm}=W(k,j)^{\pm},\label{eq:5.9.}
\end{eqnarray}

\begin{eqnarray}
(M^{k,0})^{-}\boxtimes W(k,j)^{\pm}=W(k,j)^{\mp},\label{eq:5.9.'}
\end{eqnarray}

\begin{eqnarray}
(M^{k,0})^{+}\boxtimes \widetilde{W(k,\frac{k}{2})}^{\pm}=\widetilde{W(k,\frac{k}{2})}^{\pm},\label{eq:.5.9.}
\end{eqnarray}

\begin{eqnarray}
(M^{k,0})^{-}\boxtimes \widetilde{W(k,\frac{k}{2})}^{\pm}=\widetilde{W(k,\frac{k}{2})}^{\mp}.\label{eq:.5.9.'}
\end{eqnarray}

(2) For $(M^{i,\frac{i}{2}})^{+}$ being the untwisted module of type $I$, we have the following results:\\

If $k\in 2\mathbb{Z}+1$, $0\leq j\leq \frac{k-1}{2}$, we have
\begin{eqnarray}
(M^{i,\frac{i}{2}})^{+}\boxtimes W(k,j)^{\pm}=\sum\limits_{\tiny{\begin{split}|i-j|\leq l\leq i+j \\  i+j+l\in 2\mathbb{Z} \ \ \ \\ i+j+l\leq 2k\ \ \ \end{split}}} W(k,l)^{\mbox{sign}(i,j,l)^{\pm}},\label{eq:5.10}
\end{eqnarray}

\begin{eqnarray}
(M^{i,\frac{i}{2}})^{-}\boxtimes W(k,j)^{\pm}=\sum\limits_{\tiny{\begin{split}|i-j|\leq l\leq i+j \\  i+j+l\in 2\mathbb{Z} \ \ \ \\ i+j+l\leq 2k\ \ \ \end{split}}} W(k,l)^{\mbox{sign}(i,j,l)^{\mp}}.\label{eq:5.10'}
\end{eqnarray}

If $k\in 4\Z+2$, $i+j\in 2\mathbb{Z}$, or $k\in 4\Z$, $i+j\in 2\mathbb{Z}+1$, we have
\begin{eqnarray}
(M^{i,\frac{i}{2}})^{+}\boxtimes W(k,j)^{\pm}=\sum\limits_{\tiny{\begin{split}|i-j|\leq l\leq i+j \\  i+j+l\in 2\mathbb{Z} \ \ \ \\ i+j+l\leq 2k\ \ \ \end{split}}}W(k,l)^{\mbox{sign}(i,j,l)^{\pm}},\label{eq:5.16}
\end{eqnarray}
and
\begin{eqnarray}
(M^{i,\frac{i}{2}})^{-}\boxtimes W(k,j)^{\pm}=\sum\limits_{\tiny{\begin{split}|i-j|\leq l\leq i+j \\  i+j+l\in 2\mathbb{Z} \ \ \ \\ i+j+l\leq 2k\ \ \ \end{split}}}W(k,l)^{\mbox{sign}(i,j,l)^{\mp}}.\label{eq:5.16'}
\end{eqnarray}

 If $k\in 4\Z+2$, $i+j\in 2\mathbb{Z}+1$, or $k\in 4\Z$, $i+j\in2\Z$. And $i+j<\frac{k}{2}$ or $|i-j|>\frac{k}{2}$, $j\neq\frac{k}{2}$, we have
\begin{eqnarray}
(M^{i,\frac{i}{2}})^{+}\boxtimes W(k,j)^{\pm}=\sum\limits_{\tiny{\begin{split}|i-j|\leq l\leq i+j \\  i+j+l\in 2\mathbb{Z} \ \ \ \\ i+j+l\leq 2k\ \ \ \end{split}}}W(k,l)^{\mbox{sign}(i,j,l)^{\pm}},\label{eq:5.17}
\end{eqnarray}

\begin{eqnarray}
(M^{i,\frac{i}{2}})^{-}\boxtimes W(k,j)^{\pm}=\sum\limits_{\tiny{\begin{split}|i-j|\leq l\leq i+j \\  i+j+l\in 2\mathbb{Z} \ \ \ \\ i+j+l\leq 2k\ \ \ \end{split}}}W(k,l)^{\mbox{sign}(i,j,l)^{\mp}}.\label{eq:5.17'}
\end{eqnarray}

If $k\in 4\Z+2$, $i+j\in 2\mathbb{Z}+1$, or $k\in 4\Z$, $i+j\in2\Z$. And $i+j\geq\frac{k}{2}\geq |i-j|$, $j\neq\frac{k}{2}$, we have
\begin{eqnarray}
(M^{i,\frac{i}{2}})^{+}\boxtimes W(k,j)^{\pm}=\sum\limits_{\tiny{\begin{split}|i-j|\leq l\leq i+j \\  i+j+l\in 2\mathbb{Z} \ \ \ \\ i+j+l\leq 2k\ \ \ \end{split}}}W(k,l)^{\mbox{sign}(i,j,l)^{\pm}}+\widetilde{W(k,\frac{k}{2})}^{\mbox{sign}(i,j,l)^{\mp}},\label{eq:5.18}
\end{eqnarray}

\begin{eqnarray}
(M^{i,\frac{i}{2}})^{-}\boxtimes W(k,j)^{\pm}=\sum\limits_{\tiny{\begin{split}|i-j|\leq l\leq i+j \\  i+j+l\in 2\mathbb{Z} \ \ \ \\ i+j+l\leq 2k\ \ \ \end{split}}}W(k,l)^{\mbox{sign}(i,j,l)^{\mp}}+\widetilde{W(k,\frac{k}{2})}^{\mbox{sign}(i,j,l)^{\pm}}.\label{eq:5.18'}
\end{eqnarray}

 If $k\in 2\Z$, $i\in 4\mathbb{Z}+2$, we have
\begin{eqnarray}
(M^{i,\frac{i}{2}})^{+}\boxtimes W(k,\frac{k}{2})^{\pm}=\sum\limits_{\tiny{\begin{split}|i-\frac{k}{2}|\leq l<\frac{k}{2} \\ i+\frac{k}{2}+l\in 2\mathbb{Z} \  \\ i+l\leq \frac{3k}{2}\ \ \ \end{split}}}W(k,l)^{\mbox{sign}(i,\frac{k}{2},l)^{\pm}}+\widetilde{W(k,\frac{k}{2})}^{\pm},\label{eq:5.19}
\end{eqnarray}

\begin{eqnarray}
(M^{i,\frac{i}{2}})^{-}\boxtimes W(k,\frac{k}{2})^{\pm}=\sum\limits_{\tiny{\begin{split}|i-\frac{k}{2}|\leq l<\frac{k}{2} \\ i+\frac{k}{2}+l\in 2\mathbb{Z} \  \\ i+l\leq \frac{3k}{2}\ \ \ \end{split}}}W(k,l)^{\mbox{sign}(i,\frac{k}{2},l)^{\mp}}+\widetilde{W(k,\frac{k}{2})}^{\mp},\label{eq:5.19'}
\end{eqnarray}

\begin{eqnarray}
(M^{i,\frac{i}{2}})^{+}\boxtimes \widetilde{W(k,\frac{k}{2})}^{\pm}=\sum\limits_{\tiny{\begin{split}|i-\frac{k}{2}|\leq l<\frac{k}{2} \\ i+\frac{k}{2}+l\in 2\mathbb{Z} \  \\ i+l\leq \frac{3k}{2}\ \ \ \end{split}}}W(k,l)^{\mbox{sign}(i,\frac{k}{2},l)^{\pm}}+W(k,\frac{k}{2})^{\pm},\label{eq:5.20}
\end{eqnarray}

\begin{eqnarray}
(M^{i,\frac{i}{2}})^{-}\boxtimes \widetilde{W(k,\frac{k}{2})}^{\pm}=\sum\limits_{\tiny{\begin{split}|i-\frac{k}{2}|\leq l<\frac{k}{2} \\ i+\frac{k}{2}+l\in 2\mathbb{Z} \  \\ i+l\leq \frac{3k}{2}\ \ \ \end{split}}}W(k,l)^{\mbox{sign}(i,\frac{k}{2},l)^{\mp}}+W(k,\frac{k}{2})^{\mp}.\label{eq:5.20'}
\end{eqnarray}

 If $k\in 2\Z$, $i\in 4\mathbb{Z}$, we have
\begin{eqnarray}
(M^{i,\frac{i}{2}})^{+}\boxtimes W(k,\frac{k}{2})^{\pm}=\sum\limits_{\tiny{\begin{split}|i-\frac{k}{2}|\leq l<\frac{k}{2} \\ i+\frac{k}{2}+l\in 2\mathbb{Z} \  \\ i+l\leq \frac{3k}{2}\ \ \ \end{split}}}W(k,l)^{\mbox{sign}(i,\frac{k}{2},l)^{\pm}}+W(k,\frac{k}{2})^{\pm},\label{eq:5.21}
\end{eqnarray}

\begin{eqnarray}
(M^{i,\frac{i}{2}})^{-}\boxtimes W(k,\frac{k}{2})^{\pm}=\sum\limits_{\tiny{\begin{split}|i-\frac{k}{2}|\leq l<\frac{k}{2} \\ i+\frac{k}{2}+l\in 2\mathbb{Z} \  \\ i+l\leq \frac{3k}{2}\ \ \ \end{split}}}W(k,l)^{\mbox{sign}(i,\frac{k}{2},l)^{\mp}}+W(k,\frac{k}{2})^{\mp},\label{eq:5.21'}
\end{eqnarray}

\begin{eqnarray}
(M^{i,\frac{i}{2}})^{+}\boxtimes \widetilde{W(k,\frac{k}{2})}^{\pm}=\sum\limits_{\tiny{\begin{split}|i-\frac{k}{2}|\leq l<\frac{k}{2} \\ i+\frac{k}{2}+l\in 2\mathbb{Z} \  \\ i+l\leq \frac{3k}{2}\ \ \ \end{split}}}W(k,l)^{\mbox{sign}(i,\frac{k}{2},l)^{\pm}}+\widetilde{W(k,\frac{k}{2})}^{\pm},\label{eq:5.22}
\end{eqnarray}

\begin{eqnarray}
(M^{i,\frac{i}{2}})^{-}\boxtimes \widetilde{W(k,\frac{k}{2})}^{\pm}=\sum\limits_{\tiny{\begin{split}|i-\frac{k}{2}|\leq l<\frac{k}{2} \\ i+\frac{k}{2}+l\in 2\mathbb{Z} \  \\ i+l\leq \frac{3k}{2}\ \ \ \end{split}}}W(k,l)^{\mbox{sign}(i,\frac{k}{2},l)^{\mp}}+\widetilde{W(k,\frac{k}{2})}^{\mp}.\label{eq:5.22'}
\end{eqnarray}

(3) For $M^{i,i^{'}}$ being the untwisted modules of type $II$, we have the following results:\\

If $k\in 2\Z+1$, we have
\begin{eqnarray}\begin{split}
&M^{i,i^{'}}\boxtimes W(k,j)^{+}=M^{i,i^{'}}\boxtimes W(k,j)^{-}\\
&=\sum\limits_{\tiny{\begin{split}|i-j|\leq l\leq i+j \\  i+j+l\in 2\mathbb{Z} \ \ \ \\ i+j+l\leq 2k\ \ \ \end{split}}} \Big(W(k,l)^{+}+W(k,l)^{-}\Big),
\label{eq:5.11}\end{split}
\end{eqnarray}

If $k\in4\Z+2$, $i+j\in 2\mathbb{Z}+1$, or $k\in4\Z$, $i+j\in 2\mathbb{Z}$. And $j\neq\frac{k}{2}$, $i+j<\frac{k}{2}$ or $|i-j|>\frac{k}{2}$, we have
\begin{eqnarray}\begin{split}
&M^{i,i^{'}}\boxtimes W(k,j)^{+}=M^{i,i^{'}}\boxtimes W(k,j)^{-}\\
&=\sum\limits_{\tiny{\begin{split}|i-j|\leq l\leq i+j \\  i+j+l\in 2\mathbb{Z} \ \ \ \\ i+j+l\leq 2k\ \ \ \end{split}}}  \Big(W(k,l)^{+}+W(k,l)^{-}\Big).
\label{eq:5.12}\end{split}
\end{eqnarray}

If $k\in4\Z+2$, $i+j\in 2\mathbb{Z}+1$, or $k\in4\Z$, $i+j\in 2\mathbb{Z}$. And $j\neq\frac{k}{2}$, $i+j\geq\frac{k}{2}\geq |i-j|$, we have
\begin{eqnarray}\begin{split}
&M^{i,i^{'}}\boxtimes W(k,j)^{+}=M^{i,i^{'}}\boxtimes W(k,j)^{-}\\
&=\sum\limits_{\tiny{\begin{split}|i-j|\leq l\leq i+j \\  i+j+l\in 2\mathbb{Z} \ \ \ \\ i+j+l\leq 2k\ \ \ \end{split}}}\Big(W(k,l)^{+}+W(k,l)^{-}\Big)+ \Big(\widetilde{W(k,\frac{k}{2})}^{+}+\widetilde{W(k,\frac{k}{2})}^{-}\Big).
\label{eq:5.12'}\end{split}
\end{eqnarray}

If $k\in4\Z+2$, $i+j\in 2\mathbb{Z}$, or $k\in4\Z$, $i+j\in 2\mathbb{Z}+1$. And $j\neq\frac{k}{2}$, we have
\begin{eqnarray}\begin{split}
&M^{i,i^{'}}\boxtimes W(k,j)^{+}=M^{i,i^{'}}\boxtimes W(k,j)^{-}\\
&=\sum\limits_{\tiny{\begin{split}|i-j|\leq l\leq i+j \\  i+j+l\in 2\mathbb{Z} \ \ \ \\ i+j+l\leq 2k\ \ \ \end{split}}}\Big(W(k,l)^{+}+W(k,l)^{-}\Big).
\label{eq:5.14}\end{split}
\end{eqnarray}


 If $k\in 2\Z$, $i\in 2\mathbb{Z}+1$, we have
\begin{eqnarray}\begin{split}
&M^{i,i^{'}}\boxtimes W(k,\frac{k}{2})^{+}=M^{i,i^{'}}\boxtimes W(k,\frac{k}{2})^{-}\\
&=\sum\limits_{\tiny{\begin{split}|i-\frac{k}{2}|\leq l<\frac{k}{2} \\ i+\frac{k}{2}+l\in 2\mathbb{Z} \ \  \\ i+l\leq \frac{3k}{2}\ \ \ \end{split}}}  \Big(W(k,l)^{+}+W(k,l)^{-}\Big),
\label{eq:5.15}\end{split}
\end{eqnarray}

\begin{eqnarray}\begin{split}
&M^{i,i^{'}}\boxtimes \widetilde{W(k,\frac{k}{2})}^{+}=M^{i,i^{'}}\boxtimes \widetilde{W(k,\frac{k}{2})}^{-}\\
&=\sum\limits_{\tiny{\begin{split}|i-\frac{k}{2}|\leq l<\frac{k}{2} \\ i+\frac{k}{2}+l\in 2\mathbb{Z} \ \  \\ i+l\leq \frac{3k}{2}\ \ \ \end{split}}}  \Big(W(k,l)^{+}+W(k,l)^{-}\Big).
\label{eq:5.15'}\end{split}
\end{eqnarray}

  If $k\in 2\Z$, $i\in 2\mathbb{Z}$, $i^{'}\in 2\mathbb{Z}+1$, we have
\begin{eqnarray}\begin{split}
&M^{i,i^{'}}\boxtimes W(k,\frac{k}{2})^{+}=M^{i,i^{'}}\boxtimes W(k,\frac{k}{2})^{-}\\
&=\sum\limits_{\tiny{\begin{split}|i-\frac{k}{2}|\leq l< \frac{k}{2} \\  i+\frac{k}{2}+l\in 2\mathbb{Z} \ \  \\ i+l\leq \frac{3k}{2}\  \ \ \end{split}}}  \Big(W(k,l)^{+}+W(k,l)^{-}\Big)+\Big(\widetilde{W(k,\frac{k}{2})}^{+}+\widetilde{W(k,\frac{k}{2})}^{-}\Big).
\label{eq:5.29.}\end{split}
\end{eqnarray}

If $k\in 2\Z$, $i\in 2\mathbb{Z}$, $i^{'}\in 2\mathbb{Z}$, we have
\begin{eqnarray}\begin{split}
&M^{i,i^{'}}\boxtimes W(k,\frac{k}{2})^{+}=M^{i,i^{'}}\boxtimes W(k,\frac{k}{2})^{-}\\
&=\sum\limits_{\tiny{\begin{split}|i-\frac{k}{2}|\leq l< \frac{k}{2} \\  i+\frac{k}{2}+l\in 2\mathbb{Z} \ \  \\ i+l\leq \frac{3k}{2} \ \ \
 \end{split}}} \Big(W(k,l)^{+}+W(k,l)^{-}\Big)+\Big(W(k,\frac{k}{2})^{+}+W(k,\frac{k}{2})^{-}\Big).
\label{eq:5.30.}\end{split}
\end{eqnarray}

If $k\in 2\Z$, $i\in 2\mathbb{Z}$, $i^{'}\in 2\mathbb{Z}+1$, we have
\begin{eqnarray}\begin{split}
&M^{i,i^{'}}\boxtimes \widetilde{W(k,\frac{k}{2})}^{+}=M^{i,i^{'}}\boxtimes \widetilde{W(k,\frac{k}{2})}^{-}\\
&=\sum\limits_{\tiny{\begin{split}|i-\frac{k}{2}|\leq l<\frac{k}{2} \\  i+\frac{k}{2}+l\in 2\mathbb{Z} \ \  \\ i+l\leq \frac{3k}{2}\  \ \ \end{split}}}  \Big(W(k,l)^{+}+W(k,l)^{-}\Big)+\Big(W(k,\frac{k}{2})^{+}+W(k,\frac{k}{2})^{-}\Big).
\label{eq:5.31.}\end{split}
\end{eqnarray}

If $k\in 2\Z$, $i\in 2\mathbb{Z}$, $i^{'}\in 2\mathbb{Z}$, we have
\begin{eqnarray}\begin{split}
&M^{i,i^{'}}\boxtimes \widetilde{W(k,\frac{k}{2})}^{+}=M^{i,i^{'}}\boxtimes \widetilde{W(k,\frac{k}{2})}^{-}\\
&=\sum\limits_{\tiny{\begin{split}|i-\frac{k}{2}|\leq l< \frac{k}{2} \\  i+\frac{k}{2}+l\in 2\mathbb{Z} \ \  \\ i+l\leq \frac{3k}{2} \ \ \
 \end{split}}} \Big(W(k,l)^{+}+W(k,l)^{-}\Big)+\Big(\widetilde{W(k,\frac{k}{2})}^{+}+\widetilde{W(k,\frac{k}{2})}^{-}\Big).
\label{eq:5.32.}\end{split}
\end{eqnarray}

(4) If $k\in 2\Z$, $j\in 2\mathbb{Z}+1$, $j\neq\frac{k}{2}$, we have
\begin{eqnarray}\begin{split}
&(M^{\frac{k}{2},0})^{+}\boxtimes W(k,j)^{\pm}=(M^{\frac{k}{2},0})^{-}\boxtimes W(k,j)^{\pm}\\
&=\sum\limits_{\tiny{\begin{split}|\frac{k}{2}-j|\leq l<\frac{k}{2} \\ \frac{k}{2}+j+l\in 2\mathbb{Z} \  \\ j+l\leq \frac{3k}{2}\ \ \ \end{split}}}\Big(W(k,l)^{+}+W(k,l)^{-}\Big).
\label{eq:5.23}\end{split}
\end{eqnarray}

If $k\in 2\Z$, $j\in 2\mathbb{Z}$, $j\neq\frac{k}{2}$, we have
\begin{eqnarray}
(M^{\frac{k}{2},0})^{+}\boxtimes W(k,j)^{\pm}=\sum\limits_{\tiny{\begin{split}|\frac{k}{2}-j|\leq l<\frac{k}{2} \\ \frac{k}{2}+j+l\in 2\mathbb{Z} \  \\ j+l\leq \frac{3k}{2}\ \ \ \end{split}}} \Big(W(k,l)^{+}+W(k,l)^{-}\Big)+\Big(W(k,\frac{k}{2})^{\pm}+\widetilde{W(k,\frac{k}{2})}^{\pm}\Big),
\label{eq:5.24}
\end{eqnarray}
\begin{eqnarray}
(M^{\frac{k}{2},0})^{-}\boxtimes W(k,j)^{\pm}=\sum\limits_{\tiny{\begin{split}|\frac{k}{2}-j|\leq l<\frac{k}{2} \\ \frac{k}{2}+j+l\in 2\mathbb{Z} \  \\ j+l\leq \frac{3k}{2}\ \ \ \end{split}}}  \Big(W(k,l)^{+}+W(k,l)^{-}\Big)+\Big(W(k,\frac{k}{2})^{\mp}+\widetilde{W(k,\frac{k}{2})}^{\mp}\Big).
\label{eq:5.24'}
\end{eqnarray}

If $k\in 4\Z+2$, we have
\begin{eqnarray}
(M^{\frac{k}{2},0})^{+}\boxtimes W(k,\frac{k}{2})^{\pm}=\sum\limits_{\tiny{\begin{split}0\leq l\leq \frac{k}{2}-1 \\  k+l\in 2\mathbb{Z} \  \\ l\leq k\ \ \ \end{split}}}W(k,l)^{\mbox{sign}(\frac{k}{2},\frac{k}{2},l)^{\pm}}.
\label{eq:5.25}
\end{eqnarray}
\begin{eqnarray}
(M^{\frac{k}{2},0})^{-}\boxtimes W(k,\frac{k}{2})^{\pm}=\sum\limits_{\tiny{\begin{split}0\leq l\leq \frac{k}{2}-1 \\  k+l\in 2\mathbb{Z} \  \\ l\leq k\ \ \ \end{split}}} W(k,l)^{\mbox{sign}(\frac{k}{2},\frac{k}{2},l)^{\mp}}.
\label{eq:5.25'}
\end{eqnarray}
\begin{eqnarray}
(M^{\frac{k}{2},0})^{+}\boxtimes \widetilde{W(k,\frac{k}{2})}^{\pm}=\sum\limits_{\tiny{\begin{split}0\leq l\leq \frac{k}{2}-1 \\  k+l\in 2\mathbb{Z} \  \\ l\leq k\ \ \ \end{split}}} W(k,l)^{\mbox{sign}(\frac{k}{2},\frac{k}{2},l)^{\pm}}.
\label{eq:5.26}
\end{eqnarray}
\begin{eqnarray}
(M^{\frac{k}{2},0})^{-}\boxtimes \widetilde{W(k,\frac{k}{2})}^{\pm}=\sum\limits_{\tiny{\begin{split}0\leq l\leq \frac{k}{2}-1 \\  k+l\in 2\mathbb{Z} \  \\ l\leq k\ \ \ \end{split}}}  W(k,l)^{\mbox{sign}(\frac{k}{2},\frac{k}{2},l)^{\mp}}.
\label{eq:5.26'}
\end{eqnarray}
 If $k\in 4\mathbb{Z}$, we have
\begin{eqnarray*}
(M^{\frac{k}{2},0})^{+}\boxtimes W(k,\frac{k}{2})^{\pm}=\sum\limits_{\tiny{\begin{split}0\leq l\leq \frac{k}{2}-1 \\ k+l\in 2\mathbb{Z} \ \  \\ l\leq k \ \ \ \ \ \end{split}}} W(k,l)^{\mbox{sign}(\frac{k}{2},\frac{k}{2},l)^{\pm}}+W(k,\frac{k}{2})^{\pm}.
\end{eqnarray*}

\begin{eqnarray*}
(M^{\frac{k}{2},0})^{-}\boxtimes W(k,\frac{k}{2})^{\pm}=\sum\limits_{\tiny{\begin{split}0\leq l\leq \frac{k}{2}-1 \\ k+l\in 2\mathbb{Z} \ \  \\ l\leq k  \ \ \ \ \ \end{split}}}  W(k,l)^{\mbox{sign}(\frac{k}{2},\frac{k}{2},l)^{\mp}}+W(k,\frac{k}{2})^{\mp}.
\end{eqnarray*}

\begin{eqnarray*}
(M^{\frac{k}{2},0})^{+}\boxtimes \widetilde{W(k,\frac{k}{2})}^{\pm}=\sum\limits_{\tiny{\begin{split}0\leq l\leq \frac{k}{2}-1 \\ k+l\in 2\mathbb{Z} \ \  \\ l\leq k \ \ \ \ \ \end{split}}} W(k,l)^{\mbox{sign}(\frac{k}{2},\frac{k}{2},l)^{\pm}}+\widetilde{W(k,\frac{k}{2})}^{\pm}.
\end{eqnarray*}

\begin{eqnarray*}
(M^{\frac{k}{2},0})^{-}\boxtimes \widetilde{W(k,\frac{k}{2})}^{\pm}=\sum\limits_{\tiny{\begin{split}0\leq l\leq \frac{k}{2}-1 \\ k+l\in 2\mathbb{Z} \ \  \\ l\leq k  \ \ \ \ \ \end{split}}}  W(k,l)^{\mbox{sign}(\frac{k}{2},\frac{k}{2},l)^{\mp}}+\widetilde{W(k,\frac{k}{2})}^{\mp}.
\end{eqnarray*}

\end{thm}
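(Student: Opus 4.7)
The plan is to reduce the fusion products involving twisted type $K_0^\sigma$-modules to fusion products among $L(k,0)^\sigma$-modules, already computed in Theorem \ref{fusion.aff.}, by exploiting the decomposition
\[
L(k,i) \;=\; \bigoplus_{j=0}^{k-1} V_{\Z\gamma+(i-2j)\gamma/2k}\otimes M^{i,j}
\]
as a $V_{\Z\gamma}\otimes K_0$-module, together with its twisted analogue
\[
\overline{L(k,j)} \;=\; V_{\Z\gamma}^{T_{a_j}}\otimes W(k,j) \qquad (\text{or the sum } V_{\Z\gamma}^{T_{a_{k/2}}}\otimes W(k,\tfrac{k}{2})\oplus V_{\Z\gamma}^{T'_{a_{k/2}}}\otimes \widetilde{W(k,\tfrac{k}{2})})
\]
given in \eqref{eq:4.9.}--\eqref{eq:4.10.}. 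For each target fusion, first I would write the input modules $(M^{i,i'})^\pm$ (or $M^{i,i'}$) and $W(k,j)^\pm$ (resp.\ $\widetilde{W(k,\tfrac{k}{2})}^\pm$) as summands of appropriate $L(k,i)^\pm \otimes V_{\Z\gamma}$-isotypic components of $L(k,i)$ and $\overline{L(k,j)}$, apply the $L(k,0)^\sigma$-fusion rules \eqref{fusion.twist1}--\eqref{fusion.twist2} to determine an upper bound on the decomposition, and then project back onto $K_0^\sigma$-isotypic pieces using the known lattice fusion $V_{\Z\gamma}^+\boxtimes V_{\Z\gamma}^{T_a}=V_{\Z\gamma}^{T_a}$.

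Second, I would pin down the multiplicities and the choice of $\pm$ (in particular, the choice between $W(k,\tfrac{k}{2})^\pm$ and $\widetilde{W(k,\tfrac{k}{2})}^\pm$) by the quantum-dimension identity $\mathrm{qdim}(W^1\boxtimes W^2)=\mathrm{qdim}(W^1)\mathrm{qdim}(W^2)$ (Proposition \ref{quantum-product}) combined with Theorem \ref{quantum-dimension}. In particular, since $\mathrm{qdim}_{K_0^\sigma}W(k,i)^\pm=\sqrt{k}\,\mathrm{qdim}_{K_0}M^{i,j}/2$ for $i\neq k/2$ and $\mathrm{qdim}_{K_0^\sigma}W(k,\tfrac{k}{2})^\pm=\mathrm{qdim}_{K_0^\sigma}\widetilde{W(k,\tfrac{k}{2})}^\pm=\tfrac{\sqrt{k}}{2}\cdot\frac{\sin\pi(k/2+1)/(k+2)}{\sin\pi/(k+2)}/2$, the total quantum dimension on each side matches only if the $W$-vs-$\widetilde{W}$ summands are apportioned as stated. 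The parity assignments are determined by the $\mathrm{sign}(i,j,l)^\pm$ rules already derived in Theorem \ref{fusion.aff.}, since the $\Delta$-operator intertwining construction in Lemma \ref{lem:intertwining.} commutes with the projection onto $K_0^\sigma$-isotypic components (using $\sigma(h'')=h''$).

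Third, for the delicate cases where both $W(k,\tfrac{k}{2})^\pm$ and $\widetilde{W(k,\tfrac{k}{2})}^\pm$ are admissible candidates in the upper bound, I would return to the lattice realization of $M^{i,j}$ inside $V_{L^\perp}$ as in \eqref{eq:3.3} and use the explicit lowest-weight vectors $v^{i,j}$ together with the intertwining operator $\mathscr{Y}(e^{\lambda_1},z)e^{\lambda_2}=\mathrm{const}\cdot e^{\lambda_1+\lambda_2}+\cdots$ from \eqref{eq:3.4.} to produce an explicit nonzero lowest-weight vector in the correct $K_0^\sigma$-submodule of the fusion product. Comparing this vector with the lowest weights from \cite{JW} (recalled in Theorem \ref{thm:orbifold3'}) separates $W(k,\tfrac{k}{2})^\pm$ from $\widetilde{W(k,\tfrac{k}{2})}^\pm$, and matching with the decomposition of $L(k,i)$ as $V_{\Z\gamma}$-module fixes the $\pm$-sign.

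The main obstacle, and what makes cases \eqref{eq:5.18}--\eqref{eq:5.22'}, \eqref{eq:5.24}--\eqref{eq:5.26'} genuinely technical, is the distinction between $W(k,\tfrac{k}{2})^\pm$ and $\widetilde{W(k,\tfrac{k}{2})}^\pm$: both have the same quantum dimension and both are $\sigma$-stable twisted $K_0$-modules, so neither quantum-dimensional arguments nor the $L(k,0)^\sigma$-fusion alone suffice. This forces the detailed lattice computation using an alternative basis of $sl_2$ (as employed in \cite{JW}) and a careful tracking of the parity of $i$ modulo $4$ (and of $k$ modulo $4$) to determine the grade $(0$ or $\tfrac{1}{2})$ in which a lowest-weight vector lands, which is precisely why the theorem splits into so many sub-cases distinguished by residues mod $4$.
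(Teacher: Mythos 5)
Your proposal follows essentially the same route as the paper's proof: reduce to the $L(k,0)^{\sigma}$ fusion rules of Theorem \ref{fusion.aff.} via the decompositions \eqref{eq:4.0.} and \eqref{eq:4.9.}--\eqref{eq:4.10.}, fix multiplicities by the multiplicativity of quantum dimensions from Theorem \ref{quantum-dimension}, and resolve the $W(k,\frac{k}{2})^{\pm}$ versus $\widetilde{W(k,\frac{k}{2})}^{\pm}$ ambiguity by explicit lattice computations with the alternative basis $\{h',e',f'\}$ and the lowest-weight vectors $\eta_i$, exactly as the paper does. Only two small remarks: the correct values are $\mathrm{qdim}_{K_0^{\sigma}}W(k,i)^{\pm}=\sqrt{k}\,\mathrm{qdim}_{K_0}M^{i,j}$ and $\mathrm{qdim}_{K_0^{\sigma}}W(k,\frac{k}{2})^{\pm}=\frac{\sqrt{k}}{2}\frac{\sin\frac{\pi(k/2+1)}{k+2}}{\sin\frac{\pi}{k+2}}$ (without the extra factor $\frac{1}{2}$ you wrote), and the paper additionally exploits associativity identities such as $(M^{\frac{k}{2},0})^{+}\boxtimes(M^{k,\frac{k}{2}})^{+}=(M^{\frac{k}{2},0})^{+}$ to handle part (4), which is implicit but not spelled out in your outline.
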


\begin{proof} We will prove the case for $k\in 2\Z+1$ and $k\in 4\Z+2$,  the proof of the case $k\in 4\Z$ is similar to the proof of the case $k\in 4\Z+2$. Note that $(M^{k,0})^{+}=K_{0}^{+}$, and we have the intertwining operator in Lemma \ref{lem:intertwining.}. Similar to the proof of
(\ref{eq:5.1}) and (\ref{eq:5.1'}) in Theorem \ref{para-fusion-untwist}, we can obtain (\ref{eq:5.9}), (\ref{eq:5.9'}), (\ref{eq:5.9.})-(\ref{eq:.5.9.'}).

For (\ref{eq:5.10}) and (\ref{eq:5.10'}), from  Theorem \ref{fusion.aff.}, we have
\begin{eqnarray}\label{fusion.twist1'}
L(k,i)^{+}\boxtimes \overline{L(k,j)}^{+}=\sum\limits_{\tiny{\begin{split}|i-j|\leq l\leq i+j \\  i+j+l\in 2\mathbb{Z} \ \ \ \\ i+j+l\leq 2k\ \ \ \end{split}}} \overline{L(k,l)}^{\mbox{sign}(i,j,l)^{+}}. \label{eq:5.43.}
\end{eqnarray}
From the decomposition (\ref{eq:4.0.}):
\begin{eqnarray*}
L(k,i)=\bigoplus_{j=0}^{k-1}V_{\mathbb{Z}\gamma+(i-2j)\gamma/2k}\otimes M^{i,j} \ \ \ \mbox{for} \ 0\leq i\leq k,
\end{eqnarray*}
we have $$V_{\mathbb{Z}\gamma}^{+}\otimes (M^{i,\frac{i}{2}})^{+}\subseteq L(k,i)^{+}.$$
From the decomposition (\ref{eq:4.9.}): \begin{eqnarray*}
\overline{L(k,i)}=V_{\mathbb{Z}\gamma}^{T_{a_{i}}}\otimes W(k,i) \ \  \mbox{for} \ i\neq\frac{k}{2},
\end{eqnarray*}
where $a_{i}=1$ or $2$ depending on $i$, we have $$(V_{\mathbb{Z}\gamma}^{T_{a_{i}}})^{+}\otimes W(k,j)^{+}\subseteq \overline{L(k,i)}^{+}.$$
 Since $(M^{i,\frac{i}{2}})^{+}\subseteq L(k,i)^{+}$, $W(k,j)^{+}\subseteq \overline{L(k,i)}^{+}$, and $V_{\mathbb{Z}\gamma}^{+}\boxtimes_{V_{\mathbb{Z}\gamma}^{+}}(V_{\mathbb{Z}\gamma}^{T_{a_{i}}})^{+}=(V_{\mathbb{Z}\gamma}^{T_{a_{i}}})^{+}$, by using the quantum dimension obtained in Theorem \ref{quantum-dimension}:
\begin{eqnarray*}
 \mbox{qdim}_{K_{0}^{\sigma}}(M^{i,\frac{i}{2}})^{+}=\frac{\sin\frac{\pi(i+1)}{k+2}}{\sin\frac{\pi}{k+2}},  \
\mbox{qdim}_{K_{0}^{\sigma}}W(k,j)^{+}
=\sqrt{k}\frac{\sin\frac{\pi(j+1)}{k+2}}{\sin\frac{\pi}{k+2}} \ \mbox{for}\ j\neq\frac{k}{2},\end{eqnarray*}
and
\begin{eqnarray*}\mbox{qdim}_{K_{0}^{\sigma}}\Big((M^{i,\frac{i}{2}})^{+}\boxtimes W(k,j)^{+}\Big)=
\mbox{qdim}_{K_{0}^{\sigma}}(M^{i,\frac{i}{2}})^{+}\cdot\mbox{qdim}_{K_{0}^{\sigma}}W(k,j)^{+},
\end{eqnarray*} together with (\ref{eq:5.43.}), we can deduce
\begin{eqnarray*}
(M^{i,\frac{i}{2}})^{+}\boxtimes W(k,j)^{+}=\sum\limits_{\tiny{\begin{split}|i-j|\leq l\leq i+j \\  i+j+l\in 2\mathbb{Z} \ \ \ \\ i+j+l\leq 2k\ \ \ \end{split}}} W(k,l)^{\mbox{sign}(i,j,l)^{+}},
\end{eqnarray*}
where we notice that $l\neq \frac{k}{2}$ in this case, then (\ref{eq:5.10}), (\ref{eq:5.10'}) follows immediately. By the similar proof to (\ref{eq:5.10}) and (\ref{eq:5.10'}), just noticing the definition of $\widetilde{W(k,\frac{k}{2})}^{\pm}$, we can get (\ref{eq:5.16})-(\ref{eq:5.18'}).

For (\ref{eq:5.19}), similar to the arguments in the proof of (\ref{eq:5.10}), but noticing that in this case $l$ can take $\frac{k}{2}$, and
\begin{eqnarray*}
 \mbox{qdim}_{K_{0}^{\sigma}}W(k,\frac{k}{2})^{+}=\frac{\sqrt{k}}{2}\frac{\sin\frac{\pi(\frac{k}{2}+1)}{k+2}}{\sin\frac{\pi}{k+2}}, \end{eqnarray*}
we can obtain that
\begin{eqnarray}
(M^{i,\frac{i}{2}})^{+}\boxtimes W(k,\frac{k}{2})^{+}=\sum\limits_{\tiny{\begin{split}|i-\frac{k}{2}|\leq l<\frac{k}{2} \\ i+\frac{k}{2}+l\in 2\mathbb{Z} \  \\ i+l\leq \frac{3k}{2}\ \ \ \end{split}}}W(k,l)^{\mbox{sign}(i,\frac{k}{2},l)^{+}}+M. \label{eq:5..1}
\end{eqnarray}
Since $\mbox{sign}(i,\frac{k}{2},\frac{k}{2})^{+}=-$, from the definition of $\widetilde{W(k,\frac{k}{2})}^{+}$, we can deduce that $M=\widetilde{W(k,\frac{k}{2})}^{+}$ or $M=W(k,\frac{k}{2})^{-}$. We now prove that
$M=\widetilde{W(k,\frac{k}{2})}^{+}$.
From the lattice realization of the irreducible $K_0$-modules $M^{i,j}$, i.e., (\ref{eq:3.3}) and (\ref{eq:3.4.}), we know that there exists
$m\in \Z$ such that

\begin{eqnarray*}
v^{i,\frac{i}{2}-1}(m)v^{\frac{k}{2},\frac{k}{2}}=a_{i}e(0)v^{\frac{k}{2},\frac{k}{2}}, \ \ \ v^{i,\frac{i}{2}+1}(m-1)v^{\frac{k}{2},\frac{k}{2}}=b_{i}f(-1)v^{\frac{k}{2},\frac{k}{2}}
\end{eqnarray*}
for some nonzero complex numbers $a_i$ and $b_i$.
This implies that
\begin{eqnarray*}
(e^{'}(0)^{\frac{i}{2}+1}\eta_{i})(m)\eta_{\frac{k}{2}}=a_{i}e^{'}(0)\eta_{\frac{k}{2}},\ \ \ (e^{'}(0)^{\frac{i}{2}-1}\eta_{i})(m-2)\eta_{\frac{k}{2}}=b_{i}f^{'}(-1)\eta_{\frac{k}{2}}.
\end{eqnarray*}
Here we use an identification of basis $\{h,\ e,\ f\}$ and $\{h^{'},\ e^{'},\ f^{'}\}$. We can also deduce from the lowest weight that \begin{eqnarray}
(e^{'}(0)^{j}\eta_{i})(n)\eta_{\frac{k}{2}}=0 \label{eq:5.70}
\end{eqnarray}
 for $n>m$. Note that in this case $\frac{i}{2}\in 2\Z+1$, $v^{i,\frac{i}{2}}$ is a linear combination of the vectors $\eta_{i}, \ h^{2}(0)\eta_{i},\cdots,$ $h^{\frac{i}{2}-1}(0)\eta_{i}, \ h^{\frac{i}{2}+1}(0)\eta_{i},\cdots,h^{i}(0)\eta_{i},$ by straightforward calculations. It shows that $v^{i,\frac{i}{2}}\in L(k,i)^{+}$, and from the discussion above, we know that $\left(\begin{array}{c}
{\overline{L(k,\frac{k}{2})}}^{-}\\
L(k,i)^{+} \ {\overline{L(k,\frac{k}{2})}^{+}}
\end{array}\right)\neq 0$. From Lemma \ref{lem:intertwining.}, we have
\begin{eqnarray*}\widetilde{\mathcal{Y}}(e^{'}(0)^{j}\eta_{i},z)=\mathcal{Y}(\Delta(h^{''},z)e^{'}(0)^{j}\eta_{i},z)
=z^{\frac{j}{2}-\frac{i}{4}}\mathcal{Y}(e^{'}(0)^{j}\eta_{i},z).
\end{eqnarray*}
Thus we have $(e^{'}(0)^{\frac{i}{2}+1}\eta_{i})_{m-\frac{1}{2}}=(e^{'}(0)^{\frac{i}{2}+1}\eta_{i})(m)$. Together with (\ref{eq:5.70}) and by considering the weights of the lattice realization, we obtain that for $j>1, j\in 2\Z+1$,
\begin{eqnarray}(e^{'}(0)^{\frac{i}{2}+j}\eta_{i})_{m-\frac{1}{2}}\eta_{\frac{k}{2}}=
(e^{'}(0)^{\frac{i}{2}+j}\eta_{i})({m+\frac{j}{2}-\frac{1}{2}})\eta_{\frac{k}{2}}=0, \label{eq:5.71}
\end{eqnarray}
\begin{eqnarray}(e^{'}(0)^{\frac{i}{2}-j}\eta_{i})_{m-\frac{1}{2}}\eta_{\frac{k}{2}}=
(e^{'}(0)^{\frac{i}{2}-j}\eta_{i})({m-\frac{j}{2}-\frac{1}{2}})\eta_{\frac{k}{2}}=0. \label{eq:5.72}
\end{eqnarray}
Let \begin{eqnarray*}v^{i,\frac{i}{2}}=\sum\limits_{\tiny{\begin{split} 0\leq j\leq i \\ j\in 2\mathbb{Z} \   \end{split}}}c_{j}e^{'}(0)^{j}\eta_{i},
\end{eqnarray*}
then from (\ref{eq:5.71}) and (\ref{eq:5.72}), we have
$$v^{i,\frac{i}{2}}_{m-\frac{1}{2}}\eta_{\frac{k}{2}}=c_{\frac{i}{2}+1}a_{i}e^{'}(0)\eta_{\frac{k}{2}}+c_{\frac{i}{2}-1}b_{i}f^{'}(-1)\eta_{\frac{k}{2}}.$$
Note that $$\overline{L(k,\frac{k}{2})}^{-}=V_{\Z{\gamma}}^{T_{a_{\frac{k}{2}}},+}\otimes W(k,\frac{k}{2})^{-}\oplus V_{\Z{\gamma}}^{T^{'}_{a_{\frac{k}{2}}},+}\otimes \widetilde{W(k,\frac{k}{2})}^{+}\oplus V_{\Z{\gamma}}^{T_{a_{\frac{k}{2}}},-}\otimes W(k,\frac{k}{2})^{+}
\oplus V_{\Z{\gamma}}^{T^{'}_{a_{\frac{k}{2}}},-}\otimes \widetilde{W(k,\frac{k}{2})}^{-},$$
 $v^{i,\frac{i}{2}}\in (M^{i,\frac{i}{2}})^{+}$,  $\left(\begin{array}{c}
{\overline{L(k,\frac{k}{2})}}^{-}\\
L(k,i)^{+} \ {\overline{L(k,\frac{k}{2})}^{+}}
\end{array}\right)\neq 0$, and $$(e-f)_{-\frac{1}{2}}\eta_{\frac{k}{2}}=(f^{'}-e^{'})_{-\frac{1}{2}}\eta_{\frac{k}{2}}=(f^{'}(-1)-e^{'}(0))\eta_{\frac{k}{2}}\in \overline{L(k,\frac{k}{2})}^{-}_{\frac{1}{2}},$$ we deduce that $$v^{i,\frac{i}{2}}_{m-\frac{1}{2}}\eta_{\frac{k}{2}}=c(f^{'}(-1)-e^{'}(0))\eta_{\frac{k}{2}}$$ for some nonzero complex number $c$, which means that
$\left(\begin{array}{c}
{\widetilde{{W(k,\frac{k}{2})}}}^{+}\\
(M^{i,\frac{i}{2}})^{+} \ W(k,\frac{k}{2})^{+}
\end{array}\right)\neq 0$, that is, $M=\widetilde{W(k,\frac{k}{2})}^{+}$ as required. Thus we have (\ref{eq:5.19}).
Then (\ref{eq:5.19'}) follows immediately. Similarly, we can prove (\ref{eq:5.20}) and (\ref{eq:5.20'}).

For (\ref{eq:5.21}), similar to the analysis of (\ref{eq:5.19}), in this case, we need to prove that $M$ is $W(k,\frac{k}{2})^{+}$ in (\ref{eq:5..1}). By applying the lattice realization of $K_0$-module $M^{i,j}$, we can obtain that there exists $m\in\Z$ such that
\begin{eqnarray}
(e^{'}(0)^{\frac{i}{2}}\eta_{i})(m)\eta_{\frac{k}{2}}=a_{i}\eta_{\frac{k}{2}},  \label{eq:5..3}
\end{eqnarray}
for some nonzero complex number $a_i$. By analyzing the weights in $\overline{L(k,\frac{k}{2})}^{+}$, we can get
\begin{eqnarray}
(e^{'}(0)^{j}\eta_{i})(m)\eta_{\frac{k}{2}}=0 \label{eq:5..4}
\end{eqnarray}
for $j\in 2\Z, \ j\neq\frac{i}{2}$. Similar to the proof of (\ref{eq:5.19}), and noticing that in this case $v^{i,\frac{i}{2}}$ is a linear combination of vectors $\eta_{i}, \ ,e^{'}(0)^{2}\eta_{i},\cdots,e^{'}(0)^{\frac{i}{2}}\eta_{i},\cdots,e^{'}(0)^{i}\eta_{i}$, i.e.,
we may write \begin{eqnarray*}v^{i,\frac{i}{2}}=\sum\limits_{\tiny{\begin{split} 0\leq j\leq i \\ j\in 2\mathbb{Z} \   \end{split}}}c_{j}e^{'}(0)^{j}\eta_{i}
\end{eqnarray*}
with $c_{j}\neq 0$ for $j\in 2\Z, 0\leq j\leq i$. Thus from (\ref{eq:5..3}) and (\ref{eq:5..4}), we have
\begin{eqnarray*}
(v^{i,\frac{i}{2}})(m)\eta_{\frac{k}{2}}=c_{\frac{i}{2}}a_{i}\eta_{\frac{k}{2}}\neq 0.
\end{eqnarray*}
which means that
$\left(\begin{array}{c}
{W(k,\frac{k}{2})}^{+}\\
(M^{i,\frac{i}{2}})^{+} \ W(k,\frac{k}{2})^{+}
\end{array}\right)\neq 0$, that is, $M=W(k,\frac{k}{2})^{+}$ as required. Thus we have (\ref{eq:5.21}), and (\ref{eq:5.21'}) follows immediately.
Similarly, we can prove (\ref{eq:5.22})-(\ref{eq:5.22'}).

For (\ref{eq:5.11}), since $M^{i,i^{'}}$ are untwisted modules of type $II$, they are irreducible as $K_{0}^{\sigma}$-modules. This shows that
\begin{eqnarray*}
\begin{split}
M^{i,i^{'}}\boxtimes W(k,j)^{+}&=K_{0}^{+}\boxtimes M^{i,i^{'}}\boxtimes W(k,j)^{+}\\
&=K_{0}^{-}\boxtimes M^{i,i^{'}}\boxtimes W(k,j)^{+}\\
&=M^{i,i^{'}}\boxtimes W(k,j)^{-},
\end{split}
\end{eqnarray*}
since from Theorem \ref{quantum-dimension}, we have
\begin{eqnarray*}
 \mbox{qdim}_{K_{0}^{\sigma}}M^{i,i^{'}}=2\frac{\sin\frac{\pi(i+1)}{k+2}}{\sin\frac{\pi}{k+2}},  \
\mbox{qdim}_{K_{0}^{\sigma}}W(k,j)^{+}
=\sqrt{k}\frac{\sin\frac{\pi(j+1)}{k+2}}{\sin\frac{\pi}{k+2}} \ \mbox{for} \ j\neq\frac{k}{2}.\end{eqnarray*}
\begin{eqnarray*}
 \mbox{qdim}_{K_{0}^{\sigma}}W(k,\frac{k}{2})^{+}= \mbox{qdim}_{K_{0}^{\sigma}}\widetilde{W(k,\frac{k}{2})}^{+}=\frac{\sqrt{k}}{2}\frac{\sin\frac{\pi(\frac{k}{2}+1)}{k+2}}{\sin\frac{\pi}{k+2}}.  \
\end{eqnarray*}

By using
\begin{eqnarray*}\mbox{qdim}_{K_{0}^{\sigma}}\Big((M^{i,i^{'}})^{+}\boxtimes W(k,j)^{+}\Big)=
\mbox{qdim}_{K_{0}^{\sigma}}(M^{i,i^{'}})^{+}\cdot\mbox{qdim}_{K_{0}^{\sigma}}W(k,j)^{+},
\end{eqnarray*}
and noticing that all the twisted type  modules of $K_{0}^{\sigma}$ are constructed from the twisted type modules of the affine vertex operator algebra\cite{JW}, together with Lemma \ref{lem:intertwining.}, we can get
 that in this case $l\neq\frac{k}{2}$, (\ref{eq:5.11}) holds. By the same reason as in the proof of (\ref{eq:5.11}), we can obtain (\ref{eq:5.12})-(\ref{eq:5.14}), just noticing that in (\ref{eq:5.12}) and (\ref{eq:5.14}), $l\neq\frac{k}{2}$.

For (\ref{eq:5.15}), from Theorem \ref{quantum-dimension} we notice that on the left side of the equation (\ref{eq:5.15}),  the quantum dimension is
\begin{eqnarray*}\begin{split} \mbox{qdim}_{K_{0}^{\sigma}}\Big(M^{i,i^{'}}\boxtimes W(k,\frac{k}{2})^{+}\Big)
&=\mbox{qdim}_{K_{0}^{\sigma}}M^{i,i^{'}}\cdot\mbox{qdim}_{K_{0}^{\sigma}}W(k,\frac{k}{2})^{+}\\
&=\sqrt{k}\frac{\sin\frac{\pi(i+1)}{k+2}}{\sin\frac{\pi}{k+2}}\frac{\sin\frac{\pi(\frac{k}{2}+1)}{k+2}}{\sin\frac{\pi}{k+2}},
\end{split}
\end{eqnarray*}
and note that if $i\leq\frac{k}{2}$, then $l_{\tiny\mbox{min}}=\frac{k}{2}-i, \ l_{\tiny\mbox{max}}=\frac{k}{2}+i$. Thus
$$\sum\limits_{\frac{k}{2}-i\leq l<\frac{k}{2}}\frac{\sin\frac{\pi(l+1)}{k+2}}{\sin\frac{\pi}{k+2}}=\sum\limits_{\frac{k}{2}< l\leq i+\frac{k}{2}}\frac{\sin\frac{\pi(l+1)}{k+2}}{\sin\frac{\pi}{k+2}}.$$
If $i>\frac{k}{2}$, then $l_{\tiny\mbox{min}}=i-\frac{k}{2}, \ l_{\tiny\mbox{max}}=2k-i-\frac{k}{2}=\frac{3k}{2}-i$. Thus
$$\sum\limits_{i-\frac{k}{2}\leq l<\frac{k}{2}}\frac{\sin\frac{\pi(l+1)}{k+2}}{\sin\frac{\pi}{k+2}}=\sum\limits_{\frac{k}{2}< l\leq \frac{3k}{2}-i}\frac{\sin\frac{\pi(l+1)}{k+2}}{\sin\frac{\pi}{k+2}}.$$
Then we can get (\ref{eq:5.15}) by using the fact that the quantum dimension is equal on both sides of the equation. Similarly, we can get (\ref{eq:5.15'}).

For (\ref{eq:5.29.}) and (\ref{eq:5.30.}), we divide the proof into four cases (i) $i\in 4\Z+2$, $i^{'}-\frac{i}{2}\in2\Z$, (ii)$i\in 4\Z+2$, $i^{'}-\frac{i}{2}\in2\Z+1$, (iii)$i\in 4\Z$, $i^{'}-\frac{i}{2}\in2\Z$, (iv) $i\in 4\Z$, $i^{'}-\frac{i}{2}\in2\Z+1$. If $i\in4\Z+2$, similar to the arguments in the proof of (\ref{eq:5.11}), but noticing that in this case $l$ can take $\frac{k}{2}$, and
\begin{eqnarray*}
 \mbox{qdim}_{K_{0}^{\sigma}}W(k,\frac{k}{2})^{+}=\frac{\sqrt{k}}{2}\frac{\sin\frac{\pi(\frac{k}{2}+1)}{k+2}}{\sin\frac{\pi}{k+2}}, \end{eqnarray*}
we obtain that
\begin{eqnarray}\begin{split}
&M^{i,i^{'}}\boxtimes W(k,\frac{k}{2})^{+}=M^{i,i^{'}}\boxtimes W(k,\frac{k}{2})^{-}\\
&=\sum\limits_{\tiny{\begin{split}|i-\frac{k}{2}|\leq l\leq \frac{k}{2} \\  i+\frac{k}{2}+l\in 2\mathbb{Z} \ \  \\ i+l\leq \frac{3k}{2}\  \ \ \end{split}}}  \Big(W(k,l)^{+}+W(k,l)^{-}\Big)+(M^{+}+M^{-}).
\label{eq:.5.29..}\end{split}
\end{eqnarray}
We prove that $M=\widetilde{W(k,\frac{k}{2})}$ if $i^{'}-\frac{i}{2}\in 2\Z$ and $M=W(k,\frac{k}{2})$ if $i^{'}-\frac{i}{2}\in 2\Z+1$.
Since
\begin{eqnarray*}
h^{'}(0)v^{i,\frac{i}{2}}=(e+f)(0)v^{i,\frac{i}{2}}=(\frac{i}{2}+1)(v^{i,\frac{i}{2}-1}+v^{i,\frac{i}{2}+1}),
\end{eqnarray*}
and noticing that $M^{i,i^{'}}\cong M^{i,i-i^{'}}$ as $K_0^{\sigma}$-module, we may assume $i^{'}>\frac{i}{2}$. By induction, we can get if $i^{'}-\frac{i}{2}\in 2\Z$,
\begin{eqnarray*}
h^{'}(0)^{i^{'}-\frac{i}{2}}v^{i,\frac{i}{2}}\in M^{i,i^{'}}\oplus M^{i,i^{'}-2}\oplus \cdots\oplus M^{i,\frac{i}{2}},
\end{eqnarray*}
and if $i^{'}-\frac{i}{2}\in 2\Z+1$,
\begin{eqnarray*}
h^{'}(0)^{i^{'}-\frac{i}{2}}v^{i,\frac{i}{2}}\in M^{i,i^{'}}\oplus M^{i,i^{'}-2}\oplus \cdots\oplus M^{i,\frac{i}{2}+1}.
\end{eqnarray*}
From the proof of (\ref{eq:5.19}), we know that
\begin{eqnarray*}
(v^{i,\frac{i}{2}})_{m-\frac{1}{2}}\eta_{\frac{k}{2}}=
(c_{\frac{i}{2}+1}e^{'}(0)^{\frac{i}{2}+1}\eta_{i}+c_{\frac{i}{2}-1}e^{'}(0)^{\frac{i}{2}-1}\eta_{i})_{m-\frac{1}{2}}\eta_{\frac{k}{2}}
=c(e^{'}(0)-f^{'}(-1))\eta_{\frac{k}{2}}\neq 0.
\end{eqnarray*}
 Noticing that
\begin{eqnarray*}
h(0)^{i^{'}-\frac{i}{2}}v^{i,\frac{i}{2}-1}=2^{i^{'}-\frac{i}{2}}v^{i,\frac{i}{2}-1},
\end{eqnarray*}

\begin{eqnarray*}
h(0)^{i^{'}-\frac{i}{2}}v^{i,\frac{i}{2}+1}=(-2)^{i^{'}-\frac{i}{2}}v^{i,\frac{i}{2}+1},
\end{eqnarray*}
together with the relation of the intertwining operator among untwisted modules and the intertwining operators among twisted modules: $$\widetilde{\mathcal{Y}}(h^{'}(0)^{i^{'}-\frac{i}{2}}v^{i,\frac{i}{2}},z)=\mathcal{Y}(\Delta(h^{''},z)h^{'}(0)^{i^{'}-\frac{i}{2}}v^{i,\frac{i}{2}},z),$$
we can deduce that if $i^{'}-\frac{i}{2}\in 2\Z$,
\begin{eqnarray*}
(v^{i,i^{'}}+v^{i,i-i^{'}})_{m-\frac{1}{2}}\eta_{\frac{k}{2}}=A_{i,i^{'}}(e^{'}(0)-f^{'}(-1))\eta_{\frac{k}{2}}
=A_{i,i^{'}}(e-f)_{\frac{1}{2}}\eta_{\frac{k}{2}}\in \widetilde{W(k,\frac{k}{2})}^{+}.
\end{eqnarray*}
If $i^{'}-\frac{i}{2}\in 2\Z+1$,
\begin{eqnarray*}
(v^{i,i^{'}}+v^{i,i-i^{'}})_{m-\frac{1}{2}}\eta_{\frac{k}{2}}=A_{i,i^{'}}(e^{'}(0)+f^{'}(-1))\eta_{\frac{k}{2}}
=A_{i,i^{'}}h_{\frac{1}{2}}\eta_{\frac{k}{2}}\in W(k,\frac{k}{2})^{+}
\end{eqnarray*}
for a nonzero complex number $A_{i,i^{'}}$. That is, if $i^{'}-\frac{i}{2}\in 2\Z$, $\left(\begin{array}{c}
{\widetilde{{W(k,\frac{k}{2})}}}^{+}\\
M^{i,i^{'}} \ W(k,\frac{k}{2})^{+}
\end{array}\right)\neq 0$. Then $M=\widetilde{W(k,\frac{k}{2})}$ as required. And
if $i^{'}-\frac{i}{2}\in 2\Z+1$, $\left(\begin{array}{c}
W(k,\frac{k}{2})^{+}\\
M^{i,i^{'}} \ W(k,\frac{k}{2})^{+}
\end{array}\right)\neq 0$, that is, $M=W(k,\frac{k}{2})$ as required.

If $i\in 4\Z$, $i^{'}-\frac{i}{2}\in 2\Z$, similar to the arguments in the above discussion, we need to prove that
$M=W(k,\frac{k}{2})$. With the proof of (\ref{eq:5.21}), notice that in this case, \begin{eqnarray*}
(v^{i,\frac{i}{2}})(m)\eta_{\frac{k}{2}}=c_{\frac{i}{2}}a_{i}\eta_{\frac{k}{2}}\in W(k,\frac{k}{2})^{+}.
\end{eqnarray*}
Following the proof of (\ref{eq:5.29.}), we can deduce that  \begin{eqnarray}
(v^{i,i^{'}}+v^{i,i-i^{'}})(m)\eta_{\frac{k}{2}}=B_{i,i^{'}}\eta_{\frac{k}{2}}\in W(k,\frac{k}{2})^{+}, \label{eq:.5..5}
\end{eqnarray}
for a nonzero complex number $B_{i,i^{'}}$. Together with the relation of the intertwining operators among untwisted modules and the intertwining operator among twisted modules, we have $\left(\begin{array}{c}
W(k,\frac{k}{2})^{+}\\
M^{i,i^{'}} \ W(k,\frac{k}{2})^{+}
\end{array}\right)\neq 0$,  that is, $M=W(k,\frac{k}{2})$ as required.

If $i\in 4\Z$, $i^{'}-\frac{i}{2}\in 2\Z+1$, we need to prove $M=\widetilde{W(k,\frac{k}{2})}$. Suppose $M\neq \widetilde{W(k,\frac{k}{2})}$, i.e., $M=W(k,\frac{k}{2})$. Notice that in this case $i\in 4\Z$, and $v^{i,\frac{i}{2}+1}+v^{i,\frac{i}{2}-1}$ is a linear combination of vectors $e^{'}(0)\eta_{i}, \ ,e^{'}(0)^{3}\eta_{i},\cdots,
 e^{'}(0)^{\frac{i}{2}-1}\eta_{i}, \  e^{'}(0)^{\frac{i}{2}+1}\eta_{i},\cdots, e^{'}(0)^{i-1}\eta_{i}$, that is, we can write
 \begin{eqnarray*} v^{i,\frac{i}{2}+1}+v^{i,\frac{i}{2}-1}=\sum\limits_{\tiny{\begin{split} 1\leq j\leq i-1 \\ j\in 2\mathbb{Z}+1 \   \end{split}}}d_{j}e^{'}(0)^{j}\eta_{i}.
 \end{eqnarray*}
  As the proof of (\ref{eq:5.29.}), there exists $m\in \Z$ such that
\begin{eqnarray*} (e^{'}(0)^{\frac{i}{2}+1}\eta_{i})(m)\eta_{\frac{k}{2}}=a_{i}e^{'}(0)\eta_{\frac{k}{2}},
 \end{eqnarray*}
\begin{eqnarray*} (e^{'}(0)^{\frac{i}{2}-1}\eta_{i})(m-1)\eta_{\frac{k}{2}}=b_{i}f^{'}(-1)\eta_{\frac{k}{2}},
 \end{eqnarray*}
 for some nonzero complex numbers $a_{i}, b_{i}$.
 If $M=W(k,\frac{k}{2})$, this shows that
\begin{eqnarray*} (e^{'}(0)+f^{'}(-1))\eta_{\frac{k}{2}}=h_{-\frac{1}{2}}\eta_{\frac{k}{2}}\in {\mathbb{C}} (v^{i,\frac{i}{2}+1}+v^{i,\frac{i}{2}-1})_{m-\frac{1}{2}}\eta_{\frac{k}{2}}.
 \end{eqnarray*}
 Note that
 \begin{eqnarray*} h(0)v^{i,\frac{i}{2}+1}=-2v^{i,\frac{i}{2}+1},\ h(0)v^{i,\frac{i}{2}-1}=2v^{i,\frac{i}{2}-1}
 \end{eqnarray*}
 and $h^{'}(0)(v^{i,\frac{i}{2}+1}+v^{i,\frac{i}{2}-1})\in h^{'}(0)^{2}v^{i,\frac{i}{2}}$, we can deduce that
 \begin{eqnarray*} (e^{'}(0)-f^{'}(-1))\eta_{\frac{k}{2}}\in  (h(0)(v^{i,\frac{i}{2}+1}+v^{i,\frac{i}{2}-1}))_{m-\frac{1}{2}}\eta_{\frac{k}{2}}=h^{'}(0)(e^{'}(0)^{\frac{i}{2}-1}\eta_{i}+
 e^{'}(0)^{\frac{i}{2}+1}\eta_{i})_{m-\frac{1}{2}}\eta_{\frac{k}{2}}.
 \end{eqnarray*}
  This is a process from $i^{'}-\frac{i}{2}\in 2\Z+1$ to $i^{'}-\frac{i}{2}\in 2\Z$, which contradicts  (\ref{eq:.5..5}),  since $(e^{'}(0)-f^{'}(-1))\eta_{\frac{k}{2}}=(f-e)_{-\frac{1}{2}}\eta_{\frac{k}{2}}\in \widetilde{W(k,\frac{k}{2})}^{+}$. So $M=\widetilde{W(k,\frac{k}{2})}$.  (\ref{eq:5.31.}) and (\ref{eq:5.32.}) can be obtained following from the proof of (\ref{eq:5.29.}) and (\ref{eq:5.30.}).

For (\ref{eq:5.23}), from Theorem \ref{para-fusion-untwist}, we have $$(M^{\frac{k}{2},0})^{+}\boxtimes(M^{k,\frac{k}{2}})^{+}=(M^{\frac{k}{2},0})^{+}.$$
From (\ref{eq:5.16}), (\ref{eq:5.17}), we have that for $j\neq\frac{k}{2}$,
\[(M^{k,\frac{k}{2}})^{+}\boxtimes W(k,j)^{+}=\begin{cases}W(k,j)^{+},\ & \mbox{if} \ j\in 2{\mathbb{Z}}\cr
W(k,j)^{-},\ &\mbox{if} \  j\in 2{\mathbb{Z}}+1.\end{cases}\]
Since $j\in 2{\mathbb{Z}}+1$, we have
\begin{eqnarray*}\begin{split} (M^{\frac{k}{2},0})^{+}\boxtimes(M^{k,\frac{k}{2}})^{+}\boxtimes W(k,j)^{+}
&=(M^{\frac{k}{2},0})^{+}\boxtimes W(k,j)^{+}\\
&=(M^{\frac{k}{2},0})^{+}\boxtimes W(k,j)^{-}
\end{split}
\end{eqnarray*}
by associativity of the fusion product. Then from  Theorem \ref{quantum-dimension}, we have
\begin{eqnarray*}
 \mbox{qdim}_{K_{0}^{\sigma}}(M^{\frac{k}{2},0})^{+}=\frac{\sin\frac{\pi(\frac{k}{2}+1)}{k+2}}{\sin\frac{\pi}{k+2}},  \
\mbox{qdim}_{K_{0}^{\sigma}}W(k,j)^{+}
=\sqrt{k}\frac{\sin\frac{\pi(j+1)}{k+2}}{\sin\frac{\pi}{k+2}} \ \mbox{for} \ j\neq\frac{k}{2}.\end{eqnarray*}
By using
\begin{eqnarray*}\mbox{qdim}_{K_{0}^{\sigma}}\Big((M^{\frac{k}{2},0})^{+}\boxtimes W(k,j)^{+}\Big)=
\mbox{qdim}_{K_{0}^{\sigma}}(M^{\frac{k}{2},0})^{+}\cdot\mbox{qdim}_{K_{0}^{\sigma}}W(k,j)^{+},
\end{eqnarray*}
and noticing that $(M^{\frac{k}{2},0})^{+}\subseteq L(k,\frac{k}{2})^{+}$, $W(k,j)^{+}\subseteq \overline{L(k,j)}^{+}$, together with (\ref{eq:5.43.}) and $W(k,l)\cong W(k,k-l)$ as $K_0^{\sigma}$-module, we can deduce
(\ref{eq:5.23}).

For (\ref{eq:5.24}) and (\ref{eq:5.24'}), we prove the case $k\in4\Z+2$. If $k\in4\Z$, the proof is similar. Note that in this case $j\neq\frac{k}{2}$. Since $(M^{\frac{k}{2},0})^{+}\boxtimes(M^{k,\frac{k}{2}})^{+}=(M^{\frac{k}{2},0})^{+}$, we have
\begin{eqnarray}(M^{\frac{k}{2},0})^{+}\boxtimes W(k,j)^{+}=(M^{\frac{k}{2},0})^{+}\boxtimes(M^{k,\frac{k}{2}})^{+}\boxtimes W(k,j)^{+}. \label{eq:5.34.}
\end{eqnarray}
Then from Theorem \ref{quantum-dimension}, we have
\begin{eqnarray*}
 \mbox{qdim}_{K_{0}^{\sigma}}(M^{\frac{k}{2},0})^{+}=\frac{\sin\frac{\pi(\frac{k}{2}+1)}{k+2}}{\sin\frac{\pi}{k+2}},  \
\mbox{qdim}_{K_{0}^{\sigma}}W(k,j)^{+}
=\sqrt{k}\frac{\sin\frac{\pi(j+1)}{k+2}}{\sin\frac{\pi}{k+2}} \ \mbox{for} \ j\neq\frac{k}{2},\end{eqnarray*}
and
\begin{eqnarray*}\mbox{qdim}_{K_{0}^{\sigma}}\Big((M^{\frac{k}{2},0})^{+}\boxtimes W(k,j)^{+}\Big)=
\mbox{qdim}_{K_{0}^{\sigma}}(M^{\frac{k}{2},0})^{+}\cdot\mbox{qdim}_{K_{0}^{\sigma}}W(k,j)^{+}.
\end{eqnarray*}
Moreover, from (\ref{eq:5.19}), we have $$(M^{k,\frac{k}{2}})^{+}\boxtimes W(k,\frac{k}{2})^{+}=\widetilde{W(k,\frac{k}{2})}^{+},$$ and from (\ref{eq:5.20}), we have
$$(M^{k,\frac{k}{2}})^{+}\boxtimes \widetilde{W(k,\frac{k}{2})}^{+}=W(k,\frac{k}{2})^{+}.$$ From (\ref{eq:5.18}), we have
$$(M^{k,\frac{k}{2}})^{+}\boxtimes W(k,l)^{\pm}=W(k,k-l)^{\mp}$$ for $|\frac{k}{2}-j|\leq l <\frac{k}{2}$ and $l\in 2\Z+1$. Noticing that $(M^{\frac{k}{2},0})^{+}\subseteq L(k,\frac{k}{2})^{+}$, $W(k,j)^{+}\subseteq \overline{L(k,j)}^{+}$, together with (\ref{eq:5.43.}), we can deduce that (\ref{eq:5.34.}) implies (\ref{eq:5.24}), and then (\ref{eq:5.24'}) follows immediately.

For (\ref{eq:5.25}), from Theorem \ref{quantum-dimension}, the quantum dimension of the left side of the equation (\ref{eq:5.25}) is
\begin{eqnarray*}\begin{split} \mbox{qdim}_{K_{0}^{\sigma}}\Big((M^{\frac{k}{2},0})^{+}\boxtimes W(k,\frac{k}{2})^{+}\Big)
&=\mbox{qdim}_{K_{0}^{\sigma}}(M^{\frac{k}{2},0})^{+}\cdot\mbox{qdim}_{K_{0}^{\sigma}}W(k,\frac{k}{2})^{+}\\
&=\frac{\sqrt{k}}{2}\frac{\sin\frac{\pi(\frac{k}{2}+1)}{k+2}}{\sin\frac{\pi}{k+2}}\frac{\sin\frac{\pi(\frac{k}{2}+1)}{k+2}}{\sin\frac{\pi}{k+2}}.
\end{split}
\end{eqnarray*}
Since $(M^{\frac{k}{2},0})^{+}\subseteq L(k,\frac{k}{2})^{+}$, $W(k,\frac{k}{2})^{+}\subseteq \overline{L(k,\frac{k}{2})}^{+}$, by using (\ref{eq:5.43.}), and noticing that $l\neq \frac{k}{2}$ in this case, we can deduce (\ref{eq:5.25}), and (\ref{eq:5.25'}) follows immediately. Since by (\ref{eq:5.19}), $(M^{k,\frac{k}{2}})^{+}\boxtimes W(k,\frac{k}{2})^{+}=\widetilde{W(k,\frac{k}{2})}^{+}$, we have
$$(M^{\frac{k}{2},0})^{+}\boxtimes W(k,\frac{k}{2})^{+}=(M^{\frac{k}{2},0})^{+}\boxtimes(M^{k,\frac{k}{2}})^{+}\boxtimes W(k,\frac{k}{2})^{+}=(M^{\frac{k}{2},0})^{+}\boxtimes\widetilde{W(k,\frac{k}{2})}^{+}.$$
Thus (\ref{eq:5.26}) and (\ref{eq:5.26'}) follow immediately.

\end{proof}

\begin{thm}\label{thm:contragredient.para.}
 All the irreducible modules of the ${\mathbb{Z}}_{2}$-orbifold parafermion vertex operator algebra $K_{0}^{\sigma}$ are self-dual.
\end{thm}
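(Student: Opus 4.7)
The plan is to verify, for every irreducible $K_{0}^{\sigma}$-module $W$, that the vacuum $(M^{k,0})^{+}=K_{0}^{\sigma}$ occurs as a summand of the fusion product $W\boxtimes W$. Since for irreducible $W$ one has $N_{W,W'}^{(M^{k,0})^{+}}=1$ and $N_{W,U}^{(M^{k,0})^{+}}=0$ for every other irreducible $U\not\cong W'$, this single multiplicity check forces $W'\cong W$. Under the isomorphism $M^{0,0}\cong M^{k,0}$ (which identifies the two $\sigma$-eigenspaces, since the vacuum vector is $\sigma$-invariant), the task reduces to locating an $(M^{0,0})^{+}$-summand in $W\boxtimes W$.

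For the untwisted type I modules the required summand can be read off directly from Theorem \ref{para-fusion-untwist}: equation (\ref{eq:5.1'}) gives $(M^{k,0})^{-}\boxtimes (M^{k,0})^{-}=(M^{k,0})^{+}$; in (\ref{eq:5.2})/(\ref{eq:5.2'}) with $j=i$ the $l=0$ term is $(M^{0,0})^{\mathrm{sign}(i,i,0)^{+}}=(M^{0,0})^{+}$, because $2i\in 4\Z$ for every even $i$ appearing in the list; and in (\ref{eq:5.6})/(\ref{eq:5.6'}) the $l=k$ summand is precisely $(M^{0,0})^{+}$. For a type II module $M^{i,i'}$ we apply (\ref{eq:5.8}) with $(j,j')=(i,i')$; a direct substitution shows that the ``second index'' $\overline{\tfrac{1}{2}(2i'-i+2(i-i')-i+l)}$ collapses to $\overline{l/2}$, so the $l=0$ contribution is $(M^{0,0})^{+}\oplus (M^{0,0})^{-}$, exhibiting the vacuum.

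For the twisted type modules $W(k,j)^{\pm}$ and $\widetilde{W(k,k/2)}^{\pm}$, Theorem \ref{para-fusion-twist} only describes untwisted--twisted fusions, so we transport self-duality from the affine side using the lattice realization. The decompositions (\ref{eq:4.9.}) and (\ref{eq:4.10.}) present the $\sigma$-twisted $L(k,0)$-module $\overline{L(k,i)}$ as $V_{\Z\gamma}^{T_{a_i}}\otimes W(k,i)$, with an additional summand at $i=k/2$. Splitting both tensor factors into their $\sigma$-eigenspaces expresses each $\overline{L(k,i)}^{\pm}$ as a sum of pieces $(V_{\Z\gamma}^{T_{a_i}})^{\varepsilon_1}\otimes W(k,i)^{\varepsilon_2}$ with $\varepsilon_1\varepsilon_2$ equal to the outer sign. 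Theorem \ref{thm:contragredient}(2) yields $(\overline{L(k,i)}^{\pm})'\cong \overline{L(k,k-i)}^{\pm}$; combined with the self-duality of the twisted lattice modules $V_{\Z\gamma}^{T_1}, V_{\Z\gamma}^{T_2}$ (and of their $\sigma$-eigenspaces) and the $K_{0}$-module isomorphism $W(k,i)\cong W(k,k-i)$, this extracts the desired self-duality of each $W(k,j)^{\pm}$ and $\widetilde{W(k,k/2)}^{\pm}$.

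The technical bottleneck is the twisted case at level $k/2$, where two distinct twisted $K_0$-modules $W(k,k/2)$ and $\widetilde{W(k,k/2)}$ occur in (\ref{eq:4.10.}), and four inequivalent $K_0^{\sigma}$-irreducibles $W(k,k/2)^{\pm}, \widetilde{W(k,k/2)}^{\pm}$ live at that level; one must keep the $\sigma$-eigenvalues consistent across all three tensor factors in order to match each contragredient summand unambiguously with the correct irreducible.
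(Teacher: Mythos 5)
Your reduction of self-duality to the statement $K_{0}^{\sigma}\subseteq W\boxtimes W$ is sound, and for the untwisted modules your execution is correct and genuinely different from the paper's: the paper instead observes that the top level of each irreducible $K_{0}^{\sigma}$-module is one-dimensional, so the contragredient functor preserves the lowest weight, and then quotes the pairwise-distinctness of the lowest weights of the type $II$ modules from \cite{JW}, handling type $I$ by exhibiting the top-level vector of the dual inside the lattice realization. Your route (reading the vacuum off the $l=0$, resp.\ $l=k$, terms of Theorem \ref{para-fusion-untwist}) buys a purely fusion-theoretic proof for these cases; it is not circular, since Theorem \ref{para-fusion-untwist} is proved without appeal to Theorem \ref{thm:contragredient.para.}.

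The twisted case, however, has a genuine gap. Knowing $(\overline{L(k,i)}^{\pm})'\cong\overline{L(k,k-i)}^{\pm}$ and decomposing both sides over $V_{\Z\gamma}^{\sigma}\otimes K_{0}^{\sigma}$ only tells you that the collection of summands $(V_{\Z\gamma}^{T_{a}})^{\varepsilon_{1}}\otimes W(k,\cdot)^{\varepsilon_{2}}$ is permuted by taking contragredients; to conclude $(W(k,j)^{+})'\cong W(k,j)^{+}$ rather than $W(k,j)^{-}$ you must determine the contragredients of the eigenspaces $(V_{\Z\gamma}^{T_{a}})^{\pm}$ together with their signs, and fix a sign-preserving identification $W(k,i)^{\pm}\cong W(k,k-i)^{\pm}$ --- none of which you establish. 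The situation is worst exactly where you flag it: at level $\frac{k}{2}$ the four candidates $W(k,\frac{k}{2})^{\pm}$, $\widetilde{W(k,\frac{k}{2})}^{\pm}$ all occur inside the single self-dual module $\overline{L(k,\frac{k}{2})}^{+}$, and by Theorem \ref{quantum-dimension} they all have the same quantum dimension, so no counting argument will separate them. The invariant that does separate them is the lowest weight: since each top level is one-dimensional, $v$ and $v'$ have the same $o(\omega)$-eigenvalue, and by Proposition 3.13 of \cite{JW} the lowest weights of all twisted type irreducibles are pairwise different. That is precisely the paper's (essentially one-line) argument for the twisted case, and your approach cannot be completed without importing it or an equivalent distinguishing invariant.
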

\begin{proof}
From Theorem \ref{thm:orbifold3'} and Remark \ref{rmk:orbifold3'}, we know that the irreducible modules of $K_{0}^{\sigma}$ are twisted type modules and untwisted modules of type $I$ and type $II$, and the lowest weights of each irreducible $K_{0}^{\sigma}$-modules are listed in Proposition 3.13, Proposition 3.14 and Proposition 3.6 in \cite{JW}. Let $W$ be an irreducible $K_{0}^{\sigma}$-module. Since the top level of an irreducible $K_{0}^{\sigma}$-module $W$ is one-dimensional, set the top level $W_{0}=\mathbb{C}v$ and the top level of its contragredient modules $W_{0}^{'}=\mathbb{C}v^{'}$. Then $o(\omega)=\omega_{1}$ acts on the top level as scalar multiples. From the definition of the contragredient module (\ref{eq:3.0}), we have $$\langle o(\omega)v^{'}, v\rangle=\langle v^{'}, o(\omega)v\rangle.$$ It follows that $v$ and $v^{'}$ have the same weight. From Proposition 3.13 in \cite{JW}, we know that the lowest weights of irreducible twisted type modules of $K_{0}^{\sigma}$ are pairwise different, so the irreducible twisted type modules of $K_{0}^{\sigma}$ are self-dual. From Proposition 3.6 in \cite{JW}, we know that the lowest weights of irreducible untwisted $K_{0}^{\sigma}$-modules of type $II$ are pairwise different, thus the irreducible untwisted $K_{0}^{\sigma}$-modules of type $II$ are also self-dual. For the case of the irreducible untwisted $K_{0}^{\sigma}$-module of type $I$, since $(M^{k,0})^{+}=K_{0}^{\sigma}$, it is self-dual. If $k\in 2\mathbb{Z}$, we know that $\mathbf{1}\in K_{0}^{\sigma}\subseteq (M^{k,0})^{+}\boxtimes((M^{k,0})^{+})^{'}$, and
\begin{eqnarray*}v^{\frac{k}{2},0}=\sum\limits_{\tiny{\begin{split}I\subseteq\{1,2,\cdots,k\} \\  |I|=\frac{k}{2} \ \ \ \ \ \end{split}}}e^{\alpha_{I}/2} \in (M^{\frac{k}{2},0})^{+},\end{eqnarray*}
from (\ref{eq:3.4.}). Then we can deduce that
\begin{eqnarray*}v^{\frac{k}{2},\frac{k}{2}}=\sum\limits_{\tiny{\begin{split}J\subseteq\{1,2,\cdots,k\} \\  |J|=\frac{k}{2} \ \ \ \ \ \end{split}}}e^{-\alpha_{J}/2} \in ((M^{\frac{k}{2},0})^{+})^{'}.\end{eqnarray*}
Thus $((M^{\frac{k}{2},0})^{+})^{'}=(M^{\frac{k}{2},\frac{k}{2}})^{+}\cong(M^{\frac{k}{2},0})^{+}$, and so $(M^{\frac{k}{2},0})^{+}$ is self-dual. It follows that $(M^{\frac{k}{2},0})^{-}$ is self-dual.

If $i\in 2\mathbb{Z}$, we know that $\mathbf{1}\in K_{0}^{\sigma}\subseteq (M^{i,\frac{i}{2}})^{+}\boxtimes((M^{i,\frac{i}{2}})^{+})^{'}$, and note that from (\ref{eq:3.4.}),
\begin{eqnarray*}v^{i,\frac{i}{2}}=\sum\limits_{\tiny{\begin{split}I\subseteq\{1,2,\cdots,k\} \\  |I|=i \ \ \ \ \ \end{split}}}\sum\limits_{\tiny{\begin{split}J\subseteq I \\  |J|=\frac{i}{2} \end{split}}}e^{\alpha_{I-J}/2-\alpha_{J}/2} \in (M^{i,\frac{i}{2}})^{+}.\end{eqnarray*}
Then we can deduce that $v^{i,\frac{i}{2}}\in ((M^{i,\frac{i}{2}})^{+})^{'}$. Thus $((M^{i,\frac{i}{2}})^{+})^{'}=(M^{i,\frac{i}{2}})^{+}$, so $(M^{i,\frac{i}{2}})^{+}$ is self-dual. It follows that $(M^{i,\frac{i}{2}})^{-}$ is self-dual.
\end{proof}

\begin{rmk} From  Proposition \ref{fusionsymm.}, we know that Theorem \ref{para-fusion-untwist}, Theorem \ref{para-fusion-twist} and Theorem \ref{thm:contragredient.para.} give the fusion rules of all the irreducible $K_0^{\sigma}$-modules.
\end{rmk}

\begin{rmk} For $k=4$, from \cite{DLY2}, we know that the parafermion vertex operator algebra $K_0$ is isomorphic to the lattice vertex operator algebra $V_{{\Z}\alpha}^{+}$ with $\langle \alpha,\alpha \rangle=6$, so the orbifold $K_{0}^{\sigma}$ is isomorphic to the lattice vertex operator algebra $V_{{\Z}\alpha}^{+}$ with $\langle \alpha,\alpha \rangle=24$. In this case, our result about the  fusion rules of $K_{0}^{\sigma}$ is the same as that of the orbifold $V_{L}^{+}$ given in \cite{Ab1}.
\end{rmk}

\end{document}